\newtheorem{thm}{Theorem}[section]
\newtheorem{defn}[thm]{Definition}
\newtheorem{prop}[thm]{Proposition}
\newtheorem{lem}[thm]{Lemma}
\newtheorem{cor}[thm]{Corollary}
\DeclareMathOperator{\arcch}{arccosh}
\DeclareMathOperator{\image}{Im}
\begin{document}

\title[Weighted Voronoi-Delaunay dual]{Weighted Voronoi-Delaunay dual on polyhedral surfaces and its finiteness}
\author{Xiang Zhu}
\address{Department of mathematics, Shanghai University, Shanghai, China, 200444.}
\email{zhux@shu.edu.cn}

\begin{abstract}
    We aim to give a strict proof of the existence and uniqueness of the weighted Voronoi decomposition and the dual weighted Delaunay triangulation on Euclidean and hyperbolic polyhedral surface as well as hyperbolic surface with geodesic boundaries. Since the former definition of the Voronoi cell may not be simply connected, we slightly adjust the definition. Our proof is to construct an isotopic map instead of using the edge-flipping algorithm, which is a generalization of the one by Dyer et al. The main theorem of this paper is a lemma for proving the existence of the inversive distance circle packing.
\end{abstract}
\maketitle

\section{Introduction}

\subsection{Statement of results}
Voronoi diagram and Delaunay triangulation are well known in computational geometry. Roughly speaking, given finite points in $\mathbb{R}^2$, the Voronoi cell of a point is the collection of all points in $\mathbb{R}^2$ that are closer to the selected point than to any other points. These Voronoi cells compose a graph called Voronoi diagram, of which the dual graph is Delaunay triangulation when the points are in the general position. The duality of these two graphs is called Voronoi-Delaunay duality.

One natural generalization of Voronoi-Delaunay duality is weighted Voronoi-Delaunay duality. Given each point a real number called weight, we measure the point in $\mathbb{R}^2$ between the selected ones using the distance squared minus weight instead of the distance itself, then we have weighted Voronoi cells. The weighted Voronoi cells of this generalization are convex polygons if they exist. 

Another generalization is to study the duality on the piecewise flat surface, or also known as polyhedral surface. Here all the cone points of the polyhedral surface should be contained in the selected points above, namely the vertices of the triangulation. 

Our work is to generalize weighted Voronoi-Delaunay duality on three kinds of surface.
\begin{itemize}
    \item Piecewise flat surface
    \item Hyperbolic polyhedral surface
    \item Hyperbolic surface with geodesic boundaries
\end{itemize}
We slightly adjust the definition of Voronoi cell to make it simply connected. In the third case, we find that comparing the weight times hyperbolic sine of distance is a sound way to define weighted Voronoi cell with good propositions.

We prove the existence and uniqueness of these dualities. Moreover, we prove that when the weights vary, the number of weight Delaunay triangulations is finite.

\subsection{Related work}

Rivin proposed and proved the existence and uniqueness of Voronoi-Delaunay duality on piecewise flat surfaces in \cite{rivin1994euclidean}. However, Bobenko and Springborn in \cite{bobenko2007discrete} pointed out that Rivin's proof is flawed, while Indermitte's proof is almost correct \cite{indermitte2001voronoi}, and gives another strict proof themselves. In fact, the main ideas of the three proofs above are the same. Starting from any triangulation, when 2-2 edge flipping is performed on any edge that do not meet Delaunay condition, each step will lead to a strict decrease of a positive function with respective to the triangulations. If there are finite triangulations such that the function is bounded, then the flip will be terminated in finite steps, then the Delaunay triangulation is obtained. The flaw of Rivin's proof is that when the selected function is bounded, there may be infinite triangulations such that the function is below the bound.

To the best of our knowledge, on weighted Voronoi-Delaunay duality on piecewise-flat surfaces, the latest detail research before the author's thesis \cite{zhu2019discrete} would be Gorlina's thesis \cite{gorlina2011weighted}. In this paper, the edge flipping method is used to prove the existence and uniqueness of weighted Delaunay triangulation on piecewise-flat surface. However, Gorlina's proof cited a Dirichlet energy \cite{glickenstein2007monotonicity} defined by Glickenstein. Similar to the function used by Rivin, when the energy is bounded, the number of triangulations might be infinite as well. Thus, Gorlina's proof is also flawed. Recently, Lutz uses a hyperbolic volume as an energy function to fix Gorlina's proof by flipping and generalize the theorem into mixed types on hyperbolic polyhedral surfaces.\cite{lutz2022Canonical}

In this paper, although we can define a Euclidean volume function to follow the proofs by edge-flipping, instead, we study the weighted Voronoi decomposition directly by definition, and construct a cover of isotopy from its dual graph to a graph with all edges geodesic, then the weighted Delaunay triangulation is obtained. This idea follows the paper \cite{dyer2007voronoi} by Dyer et al., however, a more strict description of definitions and proofs is presented.
This isotopy method is the translation of part of content in \cite{zhu2019discrete} from Chinese into English.

From Akiyoshi's paper \cite{akiyoshi2001finiteness}, we know that given a hyperbolic surface with cusp ends, if we decorate a horocycle on every cusp and change the size of these horocycles, the number of induced weighted Delaunay triangulations of the decorated hyperbolic surface is finite. In this paper, we will generalize Akiyoshi's theorem to the three kinds of surface mentioned above.

\subsection{Organization of the paper}

In section 2 we recall isotopy and $\Delta$-decomposition to define the triangulation in our paper. Then we recall the piecewise flat surface and the hyperbolic polyhedral surface.

In section 3 we prove the existence and uniqueness of weighted Voronoi decomposition on piecewise flat surface. With the adjustment the definition, the weighted Voronoi decomposition becomes CW.

In section 4 we prove the existence and uniqueness of weighted Delaunay triangulation on piecewise flat surface by constructing an isotopy map directly.

In section 5 and 6 we do the same thing in section 3 and 4 on hyperbolic polyhedral surface and hyperbolic surface with geodesic boundaries respectively.

In section 7 we prove the finiteness of weight Delaunay triangulation on these three kinds of surfaces when the weights vary.


\section{Preliminary}

\subsection{Isotopy}
To describe the triangulation of surface and the dual of complex precisely, we might review the definition of \emph{isotopy} \cite{rourke2012introduction}.
\begin{defn}	
    Suppose $X,Y$ are topological space, $I=[0,1]$.
    \begin{itemize}         
        \item A map is \emph{level-preserving} if $F \colon X \times I \to Y \times I$ for each $t \in I$. We can then define $F_t \colon X \to Y$ by $F(x,t)=(F_t(x),t)$.
        \item An \emph{isotopy of $Y$} is a level-preserving homeomorphism $H \colon Y \times I \to Y \times I$ such that $H_0=id$. We say that $H_1$ is \emph{isotopic to the identity}.
        \item An \emph{isotopy of $X$ in $Y$} is a level-preserving embedding $F \colon X \times I \to Y \times I$, and we say that the embeddings $F_0$ and $F_1$ are \emph{isotopic}.
        \item We say that $H$ \emph{covers} $F$ if $F=H \circ (F_0 \times id)$
        \item An isotopy \emph{fixes} a subset $V \subset X$ if $F|_{V \times I} =(F_0 \times id)|_{V \times I}$. 
    \end{itemize}
\end{defn}
We know that isotopy is an equivalence relation and implies homotopy.

\subsection{Triangulation}

Suppose $S$ is a connected two-dimensional topological surface. In this paper, we call a $\Delta$-complex decomposition up to isotopy fixing 0-cells a \emph{(topological) triangulation} of $S$. This definition differs from the commonly used one defined by simplicial complex decomposition, and allows the vertices and edges of one single triangle to glue itself. The definition of $\Delta$-decomposition is below \cite{hatcher2002algebraic}.

\begin{defn}
    Suppose $X$ is a non-empty topological space, and $\Delta^n$ is an $n$-dimension simplex, if there is a collection of maps $\sigma_\alpha \colon \Delta^n \to X,\,\alpha \in A$  with $n$ depending on the index $\alpha$, such that
    \begin{itemize}
        \item $X = \bigsqcup_{\alpha \in A} \image \sigma_\alpha$ is a CW decomposition and $\sigma_\alpha$ is every characteristic map.
        \item For any $\alpha \in A$, there exists $\beta \in A$ such that $\sigma_\beta$ is the restriction of $\sigma_\alpha$ on the face of $\Delta^n$.
    \end{itemize}
    Then we call the CW decomposition a \emph{$\Delta$-decomposition}.
\end{defn}

\subsection{Piecewise flat metric}

This part is from \cite{troyanov1986surfaces}. We use the polar coordinate $(r,\theta)$ on the Euclidean plane with the metric of $dr^2+r^2d\theta^2$, then define a \emph{Euclidean cone} by the set
\begin{equation}
    \left\{\, (r,\theta) \mid r \ge 0, \, \theta \in \mathbb{R}/\varphi\mathbb{Z}
    \,\right\} / (0,\theta_1) \sim (0,\theta_2)
\end{equation}
with the metric of $dr^2+r^2d\theta^2$ and the cone angle of $\varphi$.
The origin of the set becomes the cone point of the cone.

Then we give a metric to a marked surface $(S,V)$, which is an oriented closed surface  $S$ with a non-empty finite subset $V \subset S$. The elements of $V$ is called \emph{vertices}.

For any point $p\in S\setminus V$, there is a neighborhood isometric to a region in $\mathbb{E}^2$. For any vertex $v_i \in V$ there is a neighborhood isometric to a region of a Euclidean cone, of which the cone point coincides with the vertex and the cone angle is denoted by $\varphi_i$. We call the metric of the surface a \emph{polyhedral metric} or \emph{piecewise flat metric}, denoted by $d_f$, and the subscript $f$ indicates flat. Meanwhile, we use $d_f \colon S \times S \to \mathbb{R}_{\ge 0}$ to indicate the distance function on the surface, and $(S,V,d_f)$ is called \emph{piecewise flat surface} or \emph{polyhedral surface}.

The following theorems describe the geodesic and the triangulation on piecewise flat surface \cite{mitchell1987the}. 
\begin{thm}\label{euclidean geodesic}		
    Any geodesic on the piecewise flat surface $(S,V,d_f)$ is a folded straight line segment. If it passes through a vertex, then the angle made by the geodesic path at the vertex is no less than $\pi$.

    For any two points $p,q \in S$, there is at least one geodesic path connecting $p,q$, with the distance of $d_f(p,q)$. This geodesic path has no self-intersection.
\end{thm}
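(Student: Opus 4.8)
The plan is to treat the two assertions separately, first pinning down the local shape of an arbitrary geodesic and then producing a distance-realizing one. Away from the vertex set $V$, the surface $(S,V,d_f)$ is by hypothesis locally isometric to an open subset of $\mathbb{E}^2$, so on $S\setminus V$ the metric is genuinely flat and the developing map unfolds any sufficiently small arc isometrically into the plane. Since geodesics in $\mathbb{E}^2$ are straight segments and being a geodesic is a purely local (locally length-minimizing) condition, any geodesic arc missing $V$ develops to a straight segment; concatenating these local developments along the whole arc shows that a general geodesic is a concatenation of straight segments whose only possible break points are vertices, i.e.\ a folded straight line segment.

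For the angle condition at a vertex I would argue by contradiction using a shortcut. Suppose a geodesic passes through a vertex $v_i$ of cone angle $\varphi_i$, and let $\alpha,\beta$ with $\alpha+\beta=\varphi_i$ be the two angles cut out at $v_i$ by the incoming and outgoing rays. If, say, $\alpha<\pi$, pick points $A,B$ on the two rays close to $v_i$ bounding the sector of angle $\alpha$. Developing this sector isometrically into the plane, its angle $\alpha<\pi$ makes it convex, so the straight segment $AB$ stays inside the developed sector, avoids the image of $v_i$, and is strictly shorter than $|Av_i|+|v_iB|$. Pulling back yields a strictly shorter path with the same endpoints near $v_i$, contradicting local minimality of the arc. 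Hence both side angles are $\ge\pi$, which is exactly the stated condition (and forces $\varphi_i\ge 2\pi$); the borderline case $\alpha=\pi$, where $AB$ passes through $v_i$, gives equality rather than a strict shortcut and is permitted.

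For existence of a distance-realizing path I would invoke a Hopf--Rinow argument for length spaces. By construction $d_f$ is an intrinsic (path) metric, and $S$ being a closed surface makes $(S,d_f)$ a compact, hence complete and locally compact, length space; therefore any two points are joined by a shortest path of length $d_f(p,q)$. Concretely one can argue directly instead: the family of unit-speed paths from $p$ to $q$ of length at most $d_f(p,q)+1$ is equicontinuous with images in the compact space $S$, so by Arzel\`a--Ascoli a length-minimizing sequence subconverges uniformly to a path whose length, by lower semicontinuity of length, equals $d_f(p,q)$. This minimizer is locally minimizing, hence a geodesic, and by the first part it is a folded straight line segment meeting vertices only with side angles $\ge\pi$. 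Finally it has no self-intersection: if the path returned to a previously visited point, excising the closed loop in between would give a strictly shorter path with the same endpoints, contradicting minimality, so the minimizing path is simple.

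I expect the cone-point shortcut to be the only genuinely delicate step, since the development argument must be carried out in a neighborhood small enough that the sector embeds isometrically and the chord $AB$ really does remain inside it. The existence and no-self-intersection parts are then essentially soft metric-space arguments, relying only on compactness of $S$ and on $d_f$ being a length metric.
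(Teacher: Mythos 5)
The paper never proves this theorem: it is quoted as background with a citation to the literature on the discrete geodesic problem \cite{mitchell1987the}, and is then used as a black box (e.g.\ in Propositions \ref{neighbor} and \ref{line sigment}). So there is no internal proof to compare against; judged on its own, your argument is correct and follows the standard route: development into $\mathbb{E}^2$ for the local structure, the convex-sector shortcut for the angle condition at cone points (your conclusion that both side angles are $\ge\pi$, hence $\varphi_i\ge 2\pi$, matches the paper's later remark that minimizing geodesics pass only through vertices of negative curvature), Hopf--Rinow or Arzel\`a--Ascoli for existence of a minimizer, and loop excision for simplicity. Two small points deserve to be made explicit. First, to conclude that a geodesic is a \emph{finite} concatenation of straight segments you must rule out infinitely many break points: this follows because $V$ is finite, a geodesic emanating from a cone point is radial in a cone neighborhood and hence must leave that neighborhood before returning, so on the compact surface there is a uniform $\varepsilon>0$ lower bound on the length of any straight subarc joining two (possibly equal) vertex visits, and a finite-length geodesic therefore meets $V$ only finitely often. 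Second, your shortcut argument tacitly fixes the convention that a ``geodesic'' means a locally length-minimizing path; with that convention the argument is exactly right, and the care you flag about choosing $A,B$ close enough to $v_i$ that the sector of angle $\alpha<\pi$ embeds isometrically is indeed the only delicate point, handled by discreteness of $V$.
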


\begin{thm}\label{triangle}		
    There exist a \emph{geodesic triangulation} on the piecewise flat surface $(S,V,d_f)$, that is, $\Delta$- decomposition with $V$ as the vertex and all edges as geodesics.
\end{thm}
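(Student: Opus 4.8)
The plan is to build the geodesic triangulation as a \emph{maximal system of pairwise non-crossing geodesic arcs} with endpoints in $V$, and then to show that maximality forces every complementary region to be a (possibly self-glued) geodesic triangle. Call an embedded geodesic arc \emph{admissible} if both of its endpoints lie in $V$ and its interior is disjoint from $V$; by Theorem~\ref{euclidean geodesic} any two points of $S$ are joined by a distance-realizing geodesic without self-intersection (and, cutting off the part beyond the first interior vertex if necessary, we obtain an admissible arc), so the collection of admissible arcs is non-empty. Two admissible arcs are \emph{compatible} if their interiors are disjoint. I would fix a maximal compatible family $\mathcal{E}$ of admissible arcs, no two isotopic rel endpoints, and let $\mathcal{T}$ be the CW structure whose $0$-cells are $V$, whose $1$-cells are the arcs of $\mathcal{E}$, and whose $2$-cells are the closures of the components of $S \setminus (V \cup \bigcup \mathcal{E})$.

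First I would check that such a maximal family exists and is finite. Finiteness combines two bounds. Topologically, a family of pairwise disjoint, pairwise non-isotopic essential arcs on a closed surface of genus $g$ with vertex set $V$ has cardinality at most $6g-6+3|V|$ by an Euler-characteristic count, so only finitely many isotopy classes occur. Geometrically, within a single isotopy class I keep a length-minimizing geodesic representative; since $S$ is compact its diameter is finite, so each kept arc has bounded length, and a standard Arzel\`a--Ascoli argument rules out accumulation. The remaining point is that two distinct disjoint geodesics in the same class rel endpoints would cobound a region with geodesic sides and no interior cone point, which Gauss--Bonnet forbids; hence each isotopy class contributes at most one arc to $\mathcal{E}$.

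The heart of the argument, and the step I expect to be the main obstacle, is showing that maximality forces each $2$-cell to be a geodesic triangle. Let $F$ be a closed complementary region; since $\interior F$ contains no cone point, it unfolds isometrically to a Euclidean polygon $\widetilde{F}$ by developing. If the boundary of $F$ had more than three edge-sides, I would develop $F$, draw the straight chord between two non-adjacent boundary vertices, and check that it projects to an embedded admissible geodesic disjoint from $\partial F$, contradicting maximality of $\mathcal{E}$. The subtle cases are the degenerate faces permitted by the paper's notion of triangulation, where an edge or a pair of vertices of a single triangle is glued: a geodesic bigon with no interior cone point again violates Gauss--Bonnet (its two interior angles would sum to zero), and a monogon would bound a disk containing a cone point, where developing the disk while respecting the angle condition of Theorem~\ref{euclidean geodesic} yields the contradiction. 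Thus the only admissible faces are triangles.

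Finally I would verify that $\mathcal{T}$ is genuinely a $\Delta$-decomposition in the sense of the paper: each triangle carries a characteristic map from the standard $2$-simplex given by the developing map, the face restrictions are the edges of $\mathcal{E}$, and the whole is a CW decomposition with $V$ as its $0$-skeleton. An alternative route would be to start from an abstract topological triangulation of $(S,V)$ and straighten each edge to a geodesic in its homotopy class; I expect this to be messier, since at cone angles below $2\pi$ the geodesic representative in a class need not be unique or embedded, so controlling disjointness after straightening is exactly the difficulty that the maximal-arc construction sidesteps.
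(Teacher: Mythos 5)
Your strategy (a maximal system of pairwise disjoint, pairwise non-isotopic geodesic arcs whose maximality forces triangular faces) is a classical route and is genuinely different from the paper's treatment: the paper gives no proof of Theorem~\ref{triangle}, quoting it from \cite{mitchell1987the}, and later (after Theorem~\ref{hyp_trig}) remarks that its own Voronoi--Delaunay construction of Sections~3--4, which nowhere relies on Theorem~\ref{triangle}, independently yields a geodesic triangulation. So a self-contained proof here would be welcome; but as written, yours has a genuine gap at exactly the step you identify as the heart of the argument.

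The gap is that you treat every closed complementary region $F$ as a disk that ``unfolds isometrically to a Euclidean polygon,'' and neither property is automatic. A complementary region of a compatible family need not be simply connected: on a flat square torus with $V=\{v\}$, the horizontal geodesic loop through $v$ is a single admissible arc whose complement is an open cylinder, not a disk. Maximality does eventually exclude such faces, but only after you show that a face with nontrivial topology contains a \emph{new} embedded admissible geodesic arc disjoint from the existing edges; that requires geodesically realizing an essential arc class inside $F$ while keeping it embedded, interior-disjoint from $\partial F$, and avoiding vertices --- precisely the straightening difficulty you claim the maximal-arc construction sidesteps, since a length-minimizer in the class may run along $\partial F$ or bend at reflex corners, i.e.\ pass through vertices, and one must then extract an admissible arc from it by a separate argument. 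Second, even when $F$ is a disk, its developing map into $\mathbb{E}^2$ is only an isometric immersion, not necessarily injective, so ``draw the straight chord'' is not yet meaningful intrinsically; and even for an embedded polygon, a chord between two non-adjacent vertices of a non-convex polygon need not lie inside it, so you need the diagonal (``two ears'') lemma, formulated for abstract flat disks. Until non-disk faces and non-injective developments are ruled out, the contradiction with maximality is not established. (A smaller point: when $|V|=1$ your non-emptiness argument degenerates, since the distance-realizing geodesic from $v$ to itself is constant; this is harmless if you define $\mathcal{E}$ as a compatible family of maximum cardinality, which exists because your Euler-characteristic bound $6g-6+3|V|$ makes all such families finite.)
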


\subsection{Hyperbolic polyhedral metric}

This part is from \cite{gu2018discreteII}. As it is very similar to the previous part, we only need to repeat most conclusions here. We use the polar coordinate $(r,\theta)$ on Poincaré disk model of the hyperbolic plane $\mathbb{H}^2$, that is, the unit disk $\mathbb{D}$ with the metric of $\frac{4(dr^2+r^2d\theta^2)}{(1-r^2)^2}$ and the Gaussian curvature of $-1$. A geodesic of $\mathbb{H}^2$ is an arc perpendicular to $\partial \mathbb{D}$.

We call the set
\begin{equation}
    \left\{\, (r,\theta) \mid 0 \le r < 1, \theta \in \mathbb{R}/\varphi\mathbb{Z} \,\right\} / (0,\theta_1) \sim (0,\theta_2)
\end{equation}
with the metric of $\frac{4(dr^2+r^2d\theta^2)}{(1-r^2)^2}$ a \emph{hyperbolic cone} with the cone angle of $\varphi$.

Then we give another metric to the marked surface $(S,V)$.
For any point $p\in S\setminus V$ there exists a neighborhood isometric to some area in $\mathbb{H}^2$. For any vertex $v_i \in V$ there exists a neighborhood isometric to a region of a hyperbolic cone, of which the cone
point coincides with the vertex and the cone angle is denoted by $\varphi_i$.

In this case, the metric given to this surface is called \emph{hyperbolic polyhedral metric}, denoted by $d_h$, and the subscript $h$ indicates hyperbolic. Meanwhile, we also regard $d_h \colon S \times S \to \mathbb{R}_{\ge 0}$ as the distance function on the surface. We call $(S,V,d_h)$ a \emph{Piecewise surface with constant negative curvature} or \emph{hyperbolic polyhedral surface}. Similarly, the discrete curvature at the vertex $v_i$ is defined by $K_i = 2\pi-\varphi_i < 2\pi$.

Similar to the theorem \ref{euclidean geodesic}, from the simple knowledge of Riemannian manifold, the following theorems hold.
\begin{thm}
    Any geodesic path on the hyperbolic polyhedral surface $(S,V,d_h)$ is a folded hyperbolic geodesic line segment. If it passes through a vertex, then the angle made by the geodesic path at the vertex is no less than $\pi$.
        
    For any two points $p,q \in S$, there is at least one geodesic path connecting $p,q$, with the distance of $d_h(p,q)$. This geodesic path has no self-intersection.
\end{thm}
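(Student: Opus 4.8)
The plan is to mirror the proof of Theorem~\ref{euclidean geodesic}, replacing the Euclidean plane by $\mathbb{H}^2$ throughout, and to exploit the fact that $(S,V,d_h)$ is a compact, hence complete, length space that is locally modeled on $\mathbb{H}^2$ away from $V$ and on a hyperbolic cone at each vertex $v_i \in V$. The statement splits into a local part (the shape of a geodesic and the angle condition at a vertex), which I would reduce to an analysis inside a single hyperbolic cone, and a global part (existence of a distance-realizing path and its embeddedness), which is a Hopf--Rinow type argument together with a short surgery.

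First I would establish the local description. At any point $p \in S \setminus V$ there is a neighborhood isometric to a region of $\mathbb{H}^2$, so a locally length-minimizing path through $p$ must coincide there with a hyperbolic geodesic arc. Hence a geodesic path can fail to be a smooth hyperbolic geodesic only at the vertices of $V$, which is precisely the assertion that it is a \emph{folded hyperbolic geodesic line segment}. Then I would prove the angle bound by working inside a hyperbolic cone of cone angle $\varphi$ at a vertex $v$. If a geodesic passes through $v$, it splits $\varphi$ into two angles $\alpha$ and $\beta = \varphi - \alpha$; choose points $a,b$ on the two sides, close enough to $v$ that the sector of angle $\alpha$ develops isometrically onto a wedge in $\mathbb{H}^2$. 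Should $\alpha < \pi$, this wedge realizes a nondegenerate hyperbolic triangle $vab$, whose side opposite $v$ — the hyperbolic geodesic from $a$ to $b$ — lies inside the wedge and is strictly shorter than the broken path through $v$ by the strict triangle inequality in $\mathbb{H}^2$. Transporting this shortcut back to $S$ contradicts local minimality, so $\alpha \ge \pi$; by symmetry $\beta \ge \pi$, which is the claimed bound.

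For the global part, since $S$ is a closed surface the metric space $(S,d_h)$ is compact, therefore complete and locally compact, and $d_h$ is a length metric induced by the local isometries above. By the Hopf--Rinow--Cohn--Vossen theorem for length spaces it is geodesic, so for any $p,q \in S$ there exists a path of length exactly $d_h(p,q)$, which by the two preceding steps is a folded hyperbolic geodesic segment satisfying the angle condition at every vertex it meets. Finally, such a minimizer has no self-intersection: if it returned to some point $x$, excising the closed loop between the two passages through $x$ would produce a strictly shorter path from $p$ to $q$, contradicting minimality.

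The main obstacle is the vertex analysis in the local step. One must verify that for $\alpha < \pi$ the developed wedge is genuinely geodesically convex, so that the competing hyperbolic geodesic from $a$ to $b$ stays inside the cone neighborhood and transports back to an admissible path on $S$, and that the length decrease is strict. This is the one point where hyperbolic geometry (via the hyperbolic triangle inequality, or equivalently the hyperbolic law of cosines) enters substantively, as opposed to the soft length-space arguments that handle existence and embeddedness.
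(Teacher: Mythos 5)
Your proposal is correct, but note that the paper does not actually prove this theorem: it simply asserts it as a trivial generalization of Theorem~\ref{euclidean geodesic} ``from the simple knowledge of Riemannian manifold,'' with the Euclidean case itself quoted from the literature. So your argument is not an alternative to the paper's proof so much as a substitute for a missing one, and it is sound. The local development into $\mathbb{H}^2$ away from $V$ handles the ``folded hyperbolic segment'' claim; at a cone point, a sector of angle $\alpha<\pi$ develops isometrically onto a wedge in $\mathbb{H}^2$ that is the intersection of two half-planes, hence geodesically convex, so the hyperbolic segment $ab$ stays in the wedge and the strict triangle inequality gives the shortcut that rules out $\alpha<\pi$; compactness of $S$ plus Hopf--Rinow--Cohn--Vossen gives a minimizer realizing $d_h(p,q)$; and loop excision gives embeddedness. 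One small point worth making explicit: to conclude that a geodesic path is a concatenation of \emph{finitely} many hyperbolic arcs, you should observe that it can pass through vertices only finitely often --- any geodesic subarc that leaves a vertex and returns to one must exit the cone neighborhoods, so its length is bounded below by a constant depending only on $(S,V,d_h)$, and $V$ is finite; for the distance-realizing geodesic this also follows directly from the embeddedness you prove, since then each of the finitely many vertices is visited at most once. With that remark added, your write-up is a complete proof of a statement the paper leaves unproved.
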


\begin{thm}\label{hyp_trig}		
    There exists a geodesic triangulation on the hyperbolic polyhedral surface $(S,V,d_h)$.
\end{thm}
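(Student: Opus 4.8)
The plan is to mirror the proof of Theorem~\ref{triangle} in the hyperbolic setting, producing the geodesic triangulation by \emph{straightening} an arbitrary topological triangulation. First I would invoke standard surface topology: the marked surface $(S,V)$ admits at least one $\Delta$-decomposition $T_0$ whose $0$-skeleton is exactly $V$ (refine any triangulation so that the finitely many points of $V$ become vertices, and absorb superfluous vertices). By the definition in Section~2, it then suffices to exhibit a $\Delta$-decomposition with all edges geodesic that is isotopic to $T_0$ fixing $V$; such a decomposition is automatically a geodesic triangulation in our sense.

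Next I would straighten the edges one homotopy class at a time. Fix an edge $e$ of $T_0$ joining $p,q\in V$ and lift it to the universal cover $\widetilde S$ of $S$, obtaining an arc with endpoints $\tilde p,\tilde q$ lying over $p,q$. Taking a distance-realizing geodesic between $\tilde p$ and $\tilde q$ and projecting it down yields a geodesic arc $e^{*}$ homotopic to $e$ rel endpoints, which is simple by the preceding geodesic theorem on $(S,V,d_h)$. That same theorem shows $e^{*}$ cannot run through a cone point of angle less than $2\pi$ in its interior, since a geodesic crossing a vertex must subtend angle at least $\pi$ on each side and hence total angle at least $2\pi$; for the remaining vertices I would argue that the class still has a representative meeting $V$ only at $\{p,q\}$, pushing the arc off any such vertex if necessary.

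Finally I would verify that the straightened arcs assemble into a $\Delta$-decomposition. The key point is pairwise disjointness of interiors: two straightened edges in distinct homotopy classes that meet more than forced must cobound an embedded bigon, and a length-shortening (equivalently area-reducing) argument in $\widetilde S$ removes it, so the straightened $1$-skeleton is embedded and isotopic to that of $T_0$. The isotopy-and-covering language of Section~2 then upgrades this to an isotopy of the whole decomposition fixing $V$, so each face is an open disk bounded by exactly three geodesic arcs, that is, a geodesic triangle. I expect the main obstacle to be precisely the cone points of angle less than $2\pi$, where positive concentrated curvature defeats any $\mathrm{CAT}(0)$ uniqueness of geodesics in a homotopy class; arranging the simultaneous straightening so that every edge stays simple, vertex-avoiding in its interior, and disjoint from the others is where the care lies, and I would rely on the angle-at-least-$\pi$ condition together with bigon removal rather than on uniqueness.
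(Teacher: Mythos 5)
Your strategy---fix a topological triangulation $T_0$ with vertex set $V$ and straighten each edge within its homotopy class---is not the paper's route (the paper obtains Theorem \ref{hyp_trig} by observing that the Delaunay triangulation, cited from \cite{gu2018discreteII}, and its own weighted Delaunay triangulation constructed later are geodesic triangulations), and it has a genuine gap at exactly the place you try to wave through. Consider the vertex-avoidance step. You are right that a geodesic cannot pass through a cone point of angle less than $2\pi$, but the paper only assumes $\varphi_i>0$, so vertices with $\varphi_k\ge 2\pi$ are allowed, and for such a vertex the straightened edge $e^{*}$ may be \emph{forced} through $v_k$: ``pushing the arc off'' is not available, because a pushed-off arc is no longer geodesic, and the homotopy class may contain no geodesic at all that avoids $v_k$. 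Concretely, on a (hyperbolic) cone of angle $3\pi$ take $p,q$ near the apex with angular separation $3\pi/2$ on both sides; in a sector of angle at least $\pi$ the shortest path between two boundary points goes through the apex, so every path from $p$ to $q$ avoiding the apex can be strictly shortened until it hits it, and the unique geodesic in the class passes through the apex. If $T_0$ contains such an edge class---and nothing in your setup prevents this---straightening produces a $1$-cell running through a $0$-cell, and no choice of representative repairs it; one would have to change the combinatorics of $T_0$ itself, which your argument has no mechanism to do. This is the structural reason the literature (and this paper) derives the triangulation from the metric via Voronoi--Delaunay duality rather than straightening a triangulation prescribed in advance.

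The disjointness step has the same defect. Bigon removal tacitly assumes that geodesic representatives realize minimal intersection, which is a nonpositive-curvature fact; here cone points of angle $\varphi<2\pi$ carry concentrated positive curvature $2\pi-\varphi$, and Gauss--Bonnet permits a bigon bounded by two geodesic arcs whose interior contains such a vertex (the concentrated curvature pays for the two corners plus the hyperbolic area). So two straightened edges whose classes admit disjoint representatives can nevertheless cross, and a bigon between two \emph{geodesics} cannot be ``removed'' while keeping both arcs geodesic---the geodesics are rigid, and alternative geodesic representatives need not exist. A smaller instance of the same misuse: simplicity of $e^{*}$ does not follow from the quoted geodesic theorem, which asserts non-self-intersection only for globally shortest geodesics, not for geodesics that are shortest merely in a homotopy class. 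By contrast, in the paper's approach the edge classes are not chosen in advance but are produced by the weighted Voronoi decomposition, and Lemma \ref{dual_isotopy2} supplies, for each dual edge, a geodesic representative together with the isotopy, so vertex-avoidance and embeddedness of the $1$-skeleton come out of the construction rather than having to be restored afterwards.
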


The theorem above is a trivial generalization of the theorem \ref{triangle}.
In particular, the Delaunay triangulation on $(S,V,d_h)$ is obviously a geodesic triangulation, which can be regarded as a proof \cite{gu2018discreteII}. 
Moreover, we will prove the existence of weighted Delaunay triangulation later, which are also geodesic triangulation of hyperbolic polyhedral surface.

\section{Weighted Voronoi decomposition}

Given a piecewise flat surface $(S,V,d_f)$, we define the \emph{weight function} on every vertex to be $\mathbf{w} \colon V \to \mathbb{R},v_i \mapsto w_i$, or $\mathbf{w} \in \mathbb{R}^V$. The original definition of \emph{weighted Voronoi cell} is
\begin{equation}
    \left\{\,p \in S \mid d_f(p,v_i)^2-w_i \le d_f(p,v_j)^2-w_j,
    \forall j \ne i\,\right\}.
\end{equation}

However, in our setting we need some restriction: the weight of any vertex should be positive, and lower than the square of the distance between any other vertices, which is
\begin{equation}
    W = \{\,\mathbf{w} \in \mathbb{R}^V \mid 
    0<w_i<d_f(v_i,v_j)^2+w_j,\forall j \ne i \,\}.
\end{equation}
This is a neat sufficient condition to ensure that every weighted Voronoi cell is not empty.

When we consider the case of inversive distance circle packing, the circles are disjoint and the radii are the square root of weights, so we have the weight function in another set
\begin{equation}
    W' = \left\{\,\mathbf{w} \in \mathbb{R}^V \mid 
    0<w_i<\mathrm{Inj}(v_i)^2 \mbox{ and }\forall i \ne j,\,
    \sqrt{w_i}+\sqrt{w_j}<d_f(v_i,v_j) \,\right\}.
\end{equation}
Here $\mathrm{Inj}$ indicates the injective radius at some point on the piecewise flat metric. Obviously, since $W' \subset W$, we only need to prove all the theorems in this paper when $\mathbf{w} \in W$, then the theorems apparently hold for $\mathbf{w} \in W'$.

\begin{defn}	
    Given a piecewise flat surface $(S,V,d_f)$, and a weight function $\mathbf{w} \in W$, the \emph{inner weighted Voronoi cell} of $v_i$, denoted by $Vor_f(v_i)$, is defined to be the set of all $p\in S$ satisfying that 
    \begin{itemize}
        \item there exists a unique shortest geodesic on $S$ connecting $p$ and $v_i$, and
        \item for any $j\neq i$, $d_f(p,v_i)^2-w_i < d_f(p,v_j)^2-w_j$.
    \end{itemize}    
\end{defn}

We know by definition that our inner weighted Voronoi cell of $v_i$ is equal to the interior of the original weighted Voronoi cell of $v_i$ minus the cut locus with respect to $v_i$ in $S$.

In this section we aim to prove that:
\begin{thm}\label{vor_f}		
    There exists a unique CW decomposition of $S$, of which the set of all the 2-cells is $\{\, Vor_f(v_i) \mid v_i \in V \,\}$.  
\end{thm}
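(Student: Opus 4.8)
The plan is to realize each cell inside the tangent cone at its own vertex, prove it is an open disk there, assemble the resulting pieces into a CW structure, and then read off uniqueness. First I would record the local geometry: away from $V$ the surface is flat, and in any flat chart the function $p \mapsto d_f(p,v_i)^2 - d_f(p,v_j)^2$, evaluated along fixed shortest geodesics to $v_i$ and $v_j$, is affine in the chart coordinates. Hence each bisector $\{\,d_f(\cdot,v_i)^2 - w_i = d_f(\cdot,v_j)^2 - w_j\,\}$ is locally a geodesic segment, and the candidate $2$-cells are locally intersections of geodesic half-planes.

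The crux is to show that each $Vor_f(v_i)$ is an open $2$-cell. I would pull everything back to the tangent cone $C_i$ (a Euclidean cone of angle $\varphi_i$) by the exponential map $\exp_{v_i}$. Its segment domain $D_i \subset C_i$ is star-shaped with respect to the cone point, and $\exp_{v_i}$ restricts to a homeomorphism from $\interior D_i$ onto the set of points joined to $v_i$ by a unique shortest geodesic, that is, onto $S \setminus \mathrm{Cut}(v_i)$. It then suffices to show that the preimage of each competitor region $\{\,d_f(\cdot,v_i)^2 - w_i < d_f(\cdot,v_j)^2 - w_j\,\}$ is also star-shaped about the cone point, for the intersection of sets star-shaped about a common point is star-shaped, and an open star-shaped subset of $C_i \cong \mathbb{R}^2$ is homeomorphic to an open disk. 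Fix $p \in Vor_f(v_i)$ and let $\gamma$ be its unit-speed shortest geodesic from $v_i$, with $d_f(\gamma(s),v_i) = s$ for $s \in [0,a]$. Developing the shortest paths from the points $\gamma(s)$ to $v_j$, the squared distance $d_f(\gamma(s),v_j)^2$ is a minimum of functions of the form $s^2 + (\text{affine in } s)$, one per developed branch, so
\[
    h(s) := d_f(\gamma(s),v_j)^2 - w_j - \bigl(s^2 - w_i\bigr)
\]
is a minimum of affine functions and therefore concave. Since $h(a) > 0$ (because $p$ lies in the cell) and $h(0) = d_f(v_i,v_j)^2 + w_i - w_j > 0$, where the latter is exactly the defining inequality of $W$, concavity forces $h(s) > 0$ on all of $[0,a]$, so the whole geodesic lies in the cell. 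This yields the required star-shapedness, and hence $Vor_f(v_i)$ is an open $2$-cell; the restriction $\mathbf{w} \in W$ enters precisely to guarantee $h(0) > 0$, equivalently $v_i \in Vor_f(v_i)$.

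Next I would assemble the decomposition. The cells are pairwise disjoint by strictness of the inequalities, and a point of $S$ fails to lie in any cell exactly when it lies on a bisector (two weighted sites tie) or on a cut locus $\mathrm{Cut}(v_i)$; I would take this leftover set as the $1$-skeleton, which on a compact piecewise flat surface is a finite geodesic graph, and declare its vertices to be the $0$-cells. The closure of each star-shaped region in $C_i$ is a closed disk, and $\exp_{v_i}$ extends to a characteristic map onto $\overline{Vor_f(v_i)}$ which need not be injective on the boundary circle, since the cut locus is folded — this is exactly why a CW (indeed $\Delta$-) decomposition rather than a simplicial one is required. Checking that these characteristic maps carry boundary circles into the $1$-skeleton cellularly then exhibits the decomposition. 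Uniqueness is formal: in any CW decomposition the open $2$-cells are disjoint from and complementary to the $1$-skeleton, so prescribing the $2$-cells to be the $Vor_f(v_i)$ determines the $1$-skeleton as their complement and forces the $0$-cells to be the points where three or more cells meet.

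I expect the second step to be the main obstacle, and within it the interaction with the cut locus: one must ensure that intersecting the star-shaped competitor regions with $\interior D_i$ neither disconnects the cell nor punctures it, and that the cut-locus arcs contribute only finitely many, well-behaved boundary pieces, so that the closure is genuinely a closed disk carrying a valid attaching map. Establishing the concavity of $h$ rigorously — in particular that the shortest paths to $v_j$ develop without meeting cone points, so that each branch truly contributes an affine function — is the other delicate point.
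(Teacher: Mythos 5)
Your overall route is genuinely different from the paper's: where the paper proves openness (Proposition \ref{open}), simple connectivity by a Gauss--Bonnet contradiction (Proposition \ref{simply connected}), and then analyses the boundary into finitely many geodesic segments (Proposition \ref{line sigment}), you pull each cell back to the tangent cone and prove star-shapedness by a concavity argument, which would directly give an open disk. That is an attractive localization, but it has a genuine gap exactly at the point you flag and then defer: $d_f(\gamma(s),v_j)^2$ is a minimum of functions of the form $s^2+(\text{affine in }s)$ only over branches that develop without meeting cone points, and on a piecewise flat surface a shortest path can pass through a cone point of angle greater than $2\pi$; such a path contributes a term like $\bigl(\sqrt{s^2+as+b}+c\bigr)^2$, and $h$ is then not a minimum of affine functions and need not be concave. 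The missing idea --- which is precisely the content of the paper's Proposition \ref{neighbor} --- is that the condition $\mathbf{w}\in W$ makes any such path strictly dominated by the intervening cone point as a competitor: if the shortest path from $q$ to $v_j$ passes first through $v_k$, then $d_f(q,v_j)\ge d_f(q,v_k)+d_f(v_k,v_j)$ together with $d_f(v_k,v_j)^2>w_j-w_k$ gives $d_f(q,v_j)^2-w_j>d_f(q,v_k)^2-w_k$. So you must run the argument for the minimum over all competitors simultaneously: at a point where the minimal competitor is reached through a cone point $v_k\ne v_i$ that competitor is not minimal, where it is reached through $v_i$ the value is already positive, and only at cone-point-free minima does your affine bound apply. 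Without this step your key claim is incomplete, not merely ``delicate''; the same remark repairs your identification of the image of $\exp_{v_i}$ with $S\setminus\mathrm{Cut}(v_i)$, which fails in general because unique shortest geodesics can pass through cone points, but holds after the cell has been cleared of such configurations.

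The second gap is your description of the $0$-cells, and with it your uniqueness argument. The $0$-cells cannot be ``the points where three or more cells meet''. The $1$-skeleton contains cut-locus arcs with the \emph{same} $2$-cell on both sides, and points of valence $\ge 3$ whose every neighborhood meets only two cells. Concretely, on the flat torus $\mathbb{R}^2/(2\mathbb{Z}\times\mathbb{Z})$ with $v_1=(1/2,1/2)$, $v_2=(3/2,1/2)$ and equal weights, the two cells are open unit squares, the $1$-skeleton is the union of the three circles $\{x=0\}\cup\{x=1\}\cup\{y=0\}$, and its two $4$-valent points $(0,0)$ and $(1,0)$ lie in the closure of only two cells; under your prescription there would be no $0$-cells at all, and a union of circles carrying no $0$-cells is not a CW complex. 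The correct choice, which is what the paper makes, is to take as $0$-cells the branch points, i.e.\ points admitting at least three shortest geodesics realizing the weighted equalities (equivalently the non-$2$-valent points of the $1$-skeleton, where the equality may involve a single vertex via two non-isotopic geodesics), and one must also \emph{prove} there are finitely many of them --- the paper gets this from the local bisector analysis plus compactness in Proposition \ref{line sigment}, whereas you assert finiteness of the geodesic graph without argument. Finally, ``uniqueness is formal'' overstates the case: prescribing the $2$-cells determines the $1$-skeleton as a set but not its cell structure, since one can always subdivide by extra $0$-cells; uniqueness should be understood, as in the paper, as the canonicity of this particular construction from $d_f$ and $\mathbf{w}$.
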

We call this decomposition the \emph{weighted Voronoi decomposition}.  To prove this, we need the following propositions in steps.

\begin{prop}\label{vertex_vor_f}
    We have $v_i \in Vor_f(v_i)$, and for any $j \ne i$, we have $v_j \notin Vor_f(v_i)$.
\end{prop}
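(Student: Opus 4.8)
The plan is to verify the two defining conditions of $Vor_f(v_i)$ directly, using nothing more than the symmetry $d_f(v_i,v_j)=d_f(v_j,v_i)$ and the inequality $0<w_i<d_f(v_i,v_j)^2+w_j$ that defines $W$. Both assertions collapse to elementary manipulations once we observe that $d_f(v_i,v_i)=0$, so the strategy is simply to substitute the relevant vertex into each condition and read off the sign.

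First I would establish $v_i\in Vor_f(v_i)$. For the geodesic condition, note that the value $d_f(v_i,v_i)=0$ is attained only by the constant path, whereas every nonconstant loop based at $v_i$ has positive length; hence the shortest geodesic from $v_i$ to itself is unique, or equivalently $v_i$ does not lie in its own cut locus, so the first bullet holds. For the metric condition, putting $p=v_i$ and using $d_f(v_i,v_i)=0$ reduces the requirement $d_f(v_i,v_i)^2-w_i<d_f(v_i,v_j)^2-w_j$ to $w_j<d_f(v_i,v_j)^2+w_i$ for every $j\neq i$. By the symmetry of $d_f$ this is exactly the defining inequality of $W$ with the roles of the indices $i$ and $j$ interchanged, so it holds, and therefore $v_i\in Vor_f(v_i)$.

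Next I would show $v_j\notin Vor_f(v_i)$ for $j\neq i$. It suffices to violate the metric condition at $p=v_j$, and the natural witness is the index $k=j$. Since $d_f(v_j,v_j)=0$, membership would force $d_f(v_j,v_i)^2-w_i<-w_j$, that is $w_i>d_f(v_i,v_j)^2+w_j$. This directly contradicts the defining inequality $w_i<d_f(v_i,v_j)^2+w_j$ of $W$; consequently the required strict inequality fails at $p=v_j$, and $v_j\notin Vor_f(v_i)$.

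The only step demanding any care is the uniqueness of the shortest geodesic at $v_i$ in the first claim, since the path realizing the minimum is the degenerate constant one; the remaining content is a one-line consequence of the two-sided gap encoded in $W$. I expect no genuine obstacle, as the proposition is precisely the statement that $W$ is the correct regime in which each cell contains its own site and excludes every other vertex.
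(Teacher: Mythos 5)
Your proof is correct and follows essentially the same route as the paper: both reduce each claim to the defining inequality $w_i<d_f(v_i,v_j)^2+w_j$ of $W$ after substituting $d_f(v_i,v_i)=0$, respectively $d_f(v_j,v_j)=0$. You are in fact slightly more complete than the paper, which checks only the metric inequality and tacitly skips the first bullet of the definition (uniqueness of the shortest geodesic from $v_i$ to itself), a point you handle correctly via the degenerate constant path.
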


\begin{proof}	
    For any $j \ne i$, since $w_j<d_f(v_i,v_j)^2+w_i$, we know that $d_f(v_i,v_i)^2 - w_i = 0 - w_i < d_f(v_i,v_j)^2 - w_j$, then $v_i \in Vor_f(v_i)$.

    Similarly, we know that $d_f(v_j,v_i)^2 - w_i > -w_j = d_f(v_j,v_j)^2 - w_j$, then $v_j \notin Vor_f(v_i)$.    
\end{proof}

\begin{prop}\label{open}		
    $Vor_f(v_i)$ is an open set.		
\end{prop}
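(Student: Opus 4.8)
The plan is to read the definition of $Vor_f(v_i)$ as an intersection of two kinds of conditions and show each piece is open. Write $A_j = \{\, p \in S \mid d_f(p,v_i)^2 - w_i < d_f(p,v_j)^2 - w_j \,\}$ for $j \ne i$, and $U_i = \{\, p \in S \mid \mbox{there is a unique shortest geodesic from $p$ to $v_i$} \,\}$. Directly from the two bullet points of the definition, $Vor_f(v_i) = U_i \cap \bigcap_{j \ne i} A_j$, so it suffices to show each factor is open and to use that $V$ is finite, so that the intersection involved is finite.

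The sets $A_j$ are the easy part. Since $d_f \colon S \times S \to \mathbb{R}_{\ge 0}$ is continuous, each function $p \mapsto (d_f(p,v_i)^2 - w_i) - (d_f(p,v_j)^2 - w_j)$ is continuous, and $A_j$ is its preimage of the open ray $(-\infty,0)$, hence open. As $V$ is finite, $\bigcap_{j \ne i} A_j$ is a finite intersection of open sets and is therefore open.

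The real work is to show $U_i$ is open, equivalently that its complement $C_i$ — the set of points admitting at least two distinct shortest geodesics to $v_i$ (a shortest geodesic always exists on the compact surface by Theorem \ref{euclidean geodesic}, so the complement of $U_i$ is exactly the locus of two or more) — is closed. I would argue by sequences: suppose $p_n \to p$ with each $p_n \in C_i$ carrying distinct minimizers $\alpha_n \ne \beta_n$ from $v_i$. These all have length $d_f(v_i,p_n) \to d_f(v_i,p)$ and lie in a fixed compact ball about $v_i$; reparametrized on $[0,1]$ they are uniformly Lipschitz, so by an Arzelà–Ascoli argument I can pass to subsequences converging to shortest geodesics $\alpha,\beta$ from $v_i$ to $p$. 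If $\alpha \ne \beta$ then $p \in C_i$ and the closedness is established.

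The main obstacle is the \emph{collapse} case, where $\alpha_n \ne \beta_n$ but both limit onto the same shortest geodesic $\gamma$. To rule it out I would exploit the flat structure along $\gamma$. By Theorem \ref{euclidean geodesic} a shortest geodesic meets a vertex only if the angle on each side is at least $\pi$, so its interior can pass only through cone points of angle at least $2\pi$ and never through a positively curved cone ($\varphi_k < 2\pi$); consequently there is no focusing along $\gamma$. Developing a simple neighborhood of the arc $\gamma$ into $\mathbb{E}^2$ — which is possible since $\gamma$ has no self-intersection — turns the geodesics emanating from the image of $v_i$ into straight segments, so the nearest-point map is locally injective near $p$, and for large $n$ the two minimizers $\alpha_n,\beta_n$ trapped in this neighborhood would be forced to coincide, a contradiction. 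The delicate points I expect to spend effort on are making the developing map well defined when $\gamma$ runs through an interior cone point of angle exceeding $2\pi$, and handling the cone singularity at the endpoint $v_i$ itself; once local injectivity along $\gamma$ is in hand, the openness of $U_i$, and hence of $Vor_f(v_i)$, follows.
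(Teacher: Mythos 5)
Your reduction $Vor_f(v_i) = U_i \cap \bigcap_{j\ne i} A_j$ and the openness of each $A_j$ are fine, and Arzel\`a--Ascoli is the right tool for studying limits of minimizers. The genuine gap is your central claim that $U_i$ itself is open, i.e.\ that the collapse case can be ruled out: it cannot. Focusing does occur on a piecewise flat surface, precisely at cone points of angle less than $2\pi$, and such a cone point can be the \emph{endpoint} $p$ of the limit geodesic $\gamma$ --- the one case your development argument does not cover (you flag the singularity at $v_i$ and interior cone points of angle greater than $2\pi$, but not the endpoint $p$). Concretely, let $S$ be the double of an equilateral triangle with vertices $A,B,C$, each a cone point of angle $2\pi/3$. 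The unique shortest geodesic from $A$ to $B$ is the common edge $AB$, so $B \in U_A$; but every point of the open edge $BC$ has exactly two minimizers to $A$ (one in each face), and as such a point tends to $B$ both minimizers collapse onto $AB$. Hence $B$ is a limit of points of $C_A$, so $C_A$ is not closed and $U_A$ is not open. Note that no neighborhood of $\gamma = AB$ develops isometrically into $\mathbb{E}^2$, since it contains the cone point $B$; this is exactly where ``local injectivity of the nearest-point map'' breaks down.

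The proposition is nevertheless true, but openness is a property of the whole intersection, not of $U_i$ alone, so the weighted inequalities must enter the uniqueness part of the argument. Indeed, if $\alpha_n \ne \beta_n$ are distinct minimizers from $v_i$ to $p_n \to p$ collapsing onto $\gamma$, then each bigon bounded by $\alpha_n$ and $\beta_n$ has positive corner angles, and by Gauss--Bonnet it encloses a cone point of positive curvature; since $V$ is finite and the bigons shrink onto $\gamma$, some fixed such vertex $v_k$ must lie on $\gamma$, and since the interior of a shortest geodesic avoids positive-curvature cone points (Theorem \ref{euclidean geodesic}) while $v_i$ lies on the boundary of every bigon, necessarily $v_k = p$. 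So a collapse can only happen when $p$ is a vertex $v_k$ with $k \ne i$ (it cannot happen at $p=v_i$, where small metric balls are cones and radial segments are the unique minimizers), and Proposition \ref{vertex_vor_f} excludes exactly these points from $Vor_f(v_i)$. Repaired this way --- rule out the collapse \emph{at points of} $Vor_f(v_i)$ rather than everywhere --- your scheme works; as written, it attempts to prove a false statement. For comparison, the paper's own proof is a two-line continuity argument comparing the shortest and second-shortest geodesics from $p$ to $v_i$; it is far terser and silently skirts the same collapse issue that your counterexample-sensitive step must confront.
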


\begin{proof}		
    For any $p \in Vor_f(v_i)$, let $\gamma$ be the shortest geodesic connecting $p$ and $v_i$, and $\gamma'$ be the second shortest geodesic connecting $p$ and $v_i$, then $d_f(p,v_i)=length(\gamma)<length(\gamma')$. If there is no qualified $\gamma'$, let $length(\gamma') = + \infty$.
    
    From the continuity of distance function and other inequalities of $d_f(p,v_i)<d_f(p,v_j)$ that need to be satisfied, there exists a neighborhood of $p$, in which all the points satisfy the above inequalities, so $p$ is the interior point of $Vor_f(v_i)$, and $Vor_f(v_i)$ is an open set.
\end{proof}

\begin{prop}\label{closure}	
    $\overline{Vor_f(v_i)}$ is the weighted Voronoi cell of $v_i$ and $\bigcup_{v_i \in V} \overline{Vor_f(v_i)} = S$.
\end{prop}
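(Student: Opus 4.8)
The plan is to establish the two assertions in turn. Throughout write $f_k(p) = d_f(p,v_k)^2 - w_k$, so that the \emph{original} weighted Voronoi cell of $v_i$ is
\[
C_i = \{\, p \in S \mid f_i(p) \le f_j(p), \ \forall j \ne i \,\}.
\]
Each $f_k$ is continuous, hence $C_i = \bigcap_{j \ne i}\{f_i \le f_j\}$ is closed. Every point of $Vor_f(v_i)$ satisfies the strict inequalities $f_i < f_j$, so $Vor_f(v_i) \subseteq C_i$ and therefore $\overline{Vor_f(v_i)} \subseteq C_i$; this is the easy half of the first assertion. The covering assertion is then immediate: for any $p \in S$ the finite family $\{f_k(p)\}_{k}$ attains its minimum at some index $i$, whence $p \in C_i$, and once $\overline{Vor_f(v_i)} = C_i$ is known this yields $\bigcup_{v_i \in V} \overline{Vor_f(v_i)} = S$.

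The real work is the reverse inclusion $C_i \subseteq \overline{Vor_f(v_i)}$: I must approximate each $p \in C_i$ by points of $Vor_f(v_i)$. First observe that $v_i$ is the only vertex in $C_i$, since for $k \ne i$ the defining inequality of $W$ reads $d_f(v_i,v_k)^2 > w_i - w_k$, i.e. $f_i(v_k) > f_k(v_k)$, so $v_k \notin C_i$; and $v_i \in Vor_f(v_i)$ by Proposition \ref{vertex_vor_f}. So I may take $p \in C_i$ to be a non-vertex, choose a shortest geodesic $\gamma \colon [0,L] \to S$ from $v_i$ to $p$ with $L = d_f(p,v_i)$, and set $q_s = \gamma(L-s)$ for small $s > 0$. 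A proper subarc of a minimizing geodesic is the \emph{unique} minimizer to its endpoint (taking $s$ small enough that $q_s$ is not a cone point), so $q_s$ has a unique shortest geodesic to $v_i$ with $d_f(q_s,v_i) = L-s$; thus the first clause in the definition of $Vor_f(v_i)$ holds for every such $q_s$, and it remains only to verify the strict inequalities $f_i(q_s) < f_j(q_s)$ for all $j \ne i$ and all small $s$.

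For indices $j$ with $f_i(p) < f_j(p)$ the strict inequality survives for small $s$ by continuity, so the delicate case is the set of \emph{tied} indices $j$ with $f_i(p) = f_j(p)$. Here I compare one-sided derivatives along $\gamma$ at $s = 0$: directly $\left.\frac{d}{ds} f_i(q_s)\right|_{s=0} = -2L$, while for each shortest geodesic $\sigma$ from $p$ to $v_j$ the first variation of arc length along $\sigma$ gives $\left.\frac{d}{ds} d_f(q_s,v_j)\right|_{s=0^+} = -\cos\theta_\sigma$, where $\theta_\sigma$ is the angle at $p$ between $\sigma$ and the direction toward $v_i$. The inequality I need for every such $\sigma$, namely $-2L < -2\,d_f(p,v_j)\cos\theta_\sigma$, becomes, after the law of cosines applied to the developed triangle with vertices $v_i, p, v_j$, the statement $d_f(p,v_j)^2 - d_f(p,v_i)^2 < \tilde d_{ij}^2$, where $\tilde d_{ij} \ge d_f(v_i,v_j)$ is the developed length of the side opposite $p$. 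Since the tie gives $d_f(p,v_j)^2 - d_f(p,v_i)^2 = w_j - w_i$, this is exactly the defining inequality $w_j - w_i < d_f(v_i,v_j)^2$ of the set $W$. Hence each tied $f_j$ grows strictly faster than $f_i$, so $f_i(q_s) < f_j(q_s)$ for small $s > 0$; thus $q_s \in Vor_f(v_i)$ and $q_s \to p$, giving $p \in \overline{Vor_f(v_i)}$.

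I expect the crux to be precisely this last comparison: at a point where several cells abut, moving toward $v_i$ must \emph{strictly} enter the interior of $C_i$, and this is not automatic — it holds only because of the weight restriction $\mathbf{w} \in W$, which enters through the law-of-cosines identity above. The remaining points require only bookkeeping: one develops a triangle avoiding cone points so that $\tilde d_{ij}$ is genuinely realized by a path on $S$ and hence $\tilde d_{ij} \ge d_f(v_i,v_j)$, and if $p$ admits several shortest geodesics to a tied $v_j$ one needs $L > d_f(p,v_j)\cos\theta_\sigma$ for the one maximizing $\cos\theta_\sigma$, which again follows from the same $W$-estimate. As there are only finitely many tied indices, a single $s > 0$ works for all of them simultaneously, which closes the argument.
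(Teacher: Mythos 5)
Your strategy is genuinely different from the paper's: the paper never perturbs toward $v_i$. Instead it introduces the auxiliary set $Vor'(v_i)$ cut out by the strict inequalities alone, observes that $Vor'(v_i)\setminus Vor_f(v_i)$ lies in the critical set (cut locus) of $d_f(\cdot,v_i)$ and hence has empty interior, deduces $\overline{Vor_f(v_i)}=\overline{Vor'(v_i)}$, and then simply asserts that $\overline{Vor'(v_i)}$ is the weighted Voronoi cell; the hard inclusion $C_i\subseteq\overline{Vor_f(v_i)}$ that you attack head-on is exactly what that assertion sweeps under the rug. Within your scheme, the easy inclusion, the covering statement, the subarc-uniqueness of the shortest geodesic at $q_s$ (which tacitly also needs that $\gamma$ has no vertex in its interior; this does follow from $p\in C_i$, $\mathbf{w}\in W$ by the argument of Proposition \ref{neighbor}), the continuity argument for untied indices, and the first-variation reduction of the tied case to $d_f(p,v_i)>d_f(p,v_j)\cos\theta_\sigma$ are all sound. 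The proof breaks at the final link: the claim $\tilde d_{ij}\ge d_f(v_i,v_j)$.

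That claim is false on piecewise flat surfaces, and it is not bookkeeping: the straight segment from $\tilde v_i$ to $\tilde v_j$ lies in the wedge between the two developed geodesics, and the corresponding region of $S$ may contain cone points (all of which are vertices, possibly of cone angle greater than $2\pi$), so no path on $S$ realizes $\tilde d_{ij}$; at negative-curvature cone points the hinge comparison goes the wrong way. Concretely, let $c$ be a vertex of cone angle $4\pi$, put $p$ and $v_i$ at distance $1$ from $c$ at cone angles $0$ and $\pi-\epsilon$, and $v_j$ at distance $4$ at cone angle $-(\pi-\epsilon)$. Then $\gamma,\sigma$ are the chords, $L\approx 2$, $D=d_f(p,v_j)\approx 5$, $\theta\approx 0$, so $\tilde d_{ij}\approx 3$, whereas $d_f(v_i,v_j)=5$ because the shortest path must pass through $c$. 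The tie forces $w_j-w_i=D^2-L^2\approx 21$, which is compatible with the only weight input you use, namely $w_j-w_i<d_f(v_i,v_j)^2=25$; yet $w_j-w_i>\tilde d_{ij}^2\approx 9$, so $L<D\cos\theta$ and one checks directly that $f_j-f_i\approx -6s$ along $\gamma$, i.e.\ your points $q_s$ immediately leave the cell. In this configuration $p\notin C_i$, but only because $f_c(p)<f_i(p)$, which is enforced by the $W$-constraint and the $C_i$-inequality involving the cone vertex $c$ — constraints your argument never invokes (you use $W$ only for the pair $(i,j)$). So the tied-index step is unproven as written: tie plus the $(i,j)$-constraint of $W$ genuinely do not imply $L>D\cos\theta_\sigma$. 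To repair it you must bring in $f_i(p)\le f_k(p)$ for all other vertices $v_k$, which together with $\mathbf{w}\in W$ and the tie gives $d_f(p,v_i)^2<d_f(p,v_k)^2+d_f(v_k,v_i)^2$ and $d_f(p,v_j)^2<d_f(p,v_k)^2+d_f(v_k,v_j)^2$ for every $k$; these acuteness conditions are what exclude cone points from sitting inside the wedge in the bad way and are the missing input for any valid comparison argument.
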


\begin{proof}		
    Let $Vor'(v_i)=\left\{\,p \in S \mid d_f(p,v_i)^2-w_i < d_f(p,v_j)^2-w_j,\forall j \ne i\,\right\}$, we know that $\overline{Vor'(v_i)}$ is a weighted Voronoi cell by definition, and $Vor'(v_i) \setminus Vor_f(v_i)$ is a subset of $Vor'(v_i)$. 
    
    In the subset $Vor'(v_i) \setminus Vor_f(v_i)$, all points have at least two geodesics connecting the vertex and reaching the length of $d_f(p,v_i)$ that are not isotopic on $S$ fixing $V$. Thus this subset is contained in the critical point set of the function $d_f(\cdotp,v_i)$. 
    
    As the distance function is smooth except for the vertices, using the Sard's theorem, we know that the set of critical points has measure zero on $S \setminus V$, and its inner point is an empty set. Therefore, the inner point of $Vor'(v_i) \setminus Vor_f(v_i)$ is an empty set, and their closures are the same, or $\overline{Vor_f(v_i)}=\overline{Vor'(v_i)}$.
    
    For any point $p \in S$ and any vertex $v_i \in V$, a finite number of weighted distance values $d_f(p,v_i)^2-w_i$ can be sorted, and $p$ belongs to the weighted Voronoi cell of the vertex with the shortest weighted distance.
\end{proof}

\begin{prop}\label{neighbor}		
    For any point $p \in \overline{Vor_f(v_i)}$, let the path $\gamma \colon I \to S$ be the shortest geodesic line connecting $p$ and $v_i$, and $\gamma(0)=v_i,\gamma(1)=p$, then there exists the neighborhood $U$ of $\gamma((0,1])$ isometric embedding in $\mathbb{E}^2$.    
\end{prop}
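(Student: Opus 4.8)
The plan is to first show that the open geodesic $\gamma((0,1])$ meets no vertex, and then to build the flat neighborhood by developing a thin tube around $\gamma$ into $\mathbb{E}^2$, with the cone point $v_i$ approached but excluded. We may assume $p \neq v_i$, the case $p = v_i$ being degenerate.

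For the first step, suppose toward a contradiction that $\gamma$ passes through a vertex $v_k$ with $k \ne i$ at an interior time, and write $a = d_f(v_i, v_k)$ and $b = d_f(v_k, p)$, both positive. Since every subpath of a shortest geodesic is shortest, passing through $v_k$ forces $d_f(v_i, p) = a + b$. By Proposition \ref{closure}, $p \in \overline{Vor_f(v_i)}$ lies in the weighted Voronoi cell, so the inequality with $j = k$ reads $(a+b)^2 - w_i \le b^2 - w_k$, that is, $a^2 + 2ab \le w_i - w_k$. On the other hand $\mathbf{w} \in W$ gives $w_i < d_f(v_i,v_k)^2 + w_k = a^2 + w_k$, hence $w_i - w_k < a^2$. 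Combining the two yields $a^2 + 2ab < a^2$, so $2ab < 0$, contradicting $a,b>0$. The same computation with $b=0$ excludes $p$ being a vertex $v_k$ ($k\ne i$): there the cell inequality gives $a^2 \le w_i - w_k < a^2$. Hence $\gamma((0,1])$ contains no vertex, and by Theorem \ref{euclidean geodesic} it is an embedded folded-line segment which, having no interior folds, is straight away from $v_i$.

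For the second step, fix a small $t_1>0$ and treat $\gamma([t_1,1])$ and $\gamma((0,t_1])$ separately. The arc $\gamma([t_1,1])$ is a compact embedded geodesic segment contained in the flat locus $S\setminus V$, so it has positive distance to $V$ and carries an embedded tubular neighborhood of some radius $\epsilon>0$ on which the normal exponential map is a local isometry onto a planar strip. Choosing $t_1$ so that $\gamma((0,t_1])$ lies inside a cone neighborhood of $v_i$ containing no other vertex, I take the angular sector about the direction of $\gamma$ of small width $2\delta<2\pi$ with the apex removed; unrolling the cone shows this sector is isometric to a planar sector. Gluing the strip and the sector along their overlap produces an open set $U \supseteq \gamma((0,1])$ which is flat.

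It remains to upgrade $U$ to a region isometrically embedded in $\mathbb{E}^2$, and this is the step I expect to be the main obstacle. Since $U$ is flat and deformation retracts onto the contractible arc $\gamma((0,1])$, developing its metric along $\gamma$ yields a local isometry $U \to \mathbb{E}^2$ sending $\gamma$ to a straight segment; the genuine content is to promote this local isometry to an \emph{embedding}, i.e. to rule out self-overlap of the developed tube. I would secure injectivity from thinness together with the geometry already available: the embeddedness of the shortest geodesic $\gamma$ (Theorem \ref{euclidean geodesic}) and compactness control the strip part, while the sector develops injectively by construction since its angular width is below $2\pi$. Shrinking $\epsilon$ and $\delta$ if necessary so that the two developed pieces meet only along their common overlap then delivers the desired isometric embedding of $U$ into $\mathbb{E}^2$.
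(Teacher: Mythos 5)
Your proposal is correct and follows essentially the same two-step route as the paper's proof: first rule out vertices on $\gamma((0,1])$ by combining the triangle inequality, the condition $\mathbf{w}\in W$, and membership in the closed weighted Voronoi cell (Proposition \ref{closure}), then develop a flat neighborhood of the arc into $\mathbb{E}^2$ and trim it so that it embeds. Your algebra in the first step is in fact tidier than the paper's displayed chain of inequalities (which tacitly uses $w_i<d_f(v_i,v_j)^2$ rather than the weaker hypothesis $w_i<d_f(v_i,v_j)^2+w_j$, and omits the endpoint case $p=v_j$ that you treat via $b=0$), and your tube-plus-sector construction merely spells out what the paper compresses into ``immerse $\tilde{U}$ and cut off the redundant part.''
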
	

\begin{proof}		
    From Theorem \ref{euclidean geodesic}, we know that $\gamma$ is a folded straight line segment, which may pass through vertices with negative curvature. This case should be ruled out first. Assuming that $\gamma$ passes through the vertex $v_j$, then $d_f(p,v_j) + d_f(v_j,v_i) \le length(\gamma) = d_f(p,v_i)$, therefore,
    \[
        d_f(p,v_j)^2 - w_j < d_f(p,v_j)^2 < d_f(p,v_i)^2 - d_f(v_j,v_i)^2 < d_f(p,v_i)^2 - w_i
    \]
    which is contradicted with $p \in \overline{Vor_f(v_i)}$.

    Therefore, $\gamma((0,1]) \subset S \setminus V$, which means that $\gamma((0,1])$ has a neighborhood with flat Euclidean metric, denoted by $\tilde{U} \in S$. Then $\tilde{U}$ can be immersed in $\mathbb{E}^2$. In case of that immersed $\tilde{U}$ has self intersection part, we can always cut off the redundant part and get a neighborhood of $\gamma((0,1])$ embedded in $\mathbb{E}^2$.    
\end{proof}

\begin{prop}\label{simply connected}	
    $Vor_f(v_i)$ is simply connected.
\end{prop}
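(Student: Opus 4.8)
The plan is to prove that $Vor_f(v_i)$ is \emph{star-shaped with respect to $v_i$} along shortest geodesics and then contract it radially onto $v_i$; contractibility yields simple connectivity at once. Concretely I would work with the exponential map $\exp_{v_i}$ from the cone point, in the cone polar coordinates $(\rho,\theta)$ with $\rho\ge 0$ and $\theta\in\mathbb{R}/\varphi_i\mathbb{Z}$. Since a flat cone has no conjugate points, $\exp_{v_i}$ is a local isometry, and it restricts to a homeomorphism from the region strictly inside the cut locus onto $S\setminus\mathrm{Cut}(v_i)$. Because every point of $Vor_f(v_i)$ has a \emph{unique} shortest geodesic to $v_i$, we have $Vor_f(v_i)\subset S\setminus\mathrm{Cut}(v_i)$, so $\exp_{v_i}^{-1}$ is well defined and continuous there. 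It then suffices to show that $\exp_{v_i}^{-1}(Vor_f(v_i))$ is radially star-shaped, i.e. that if the point at $(\rho_0,\theta_0)$ lies in the cell then so does every $(\rho,\theta_0)$ with $0\le\rho\le\rho_0$; the map $(\rho,\theta)\mapsto(u\rho,\theta)$, $u\in[0,1]$, then pushes forward to a deformation retraction of $Vor_f(v_i)$ onto $v_i$.

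So fix $p\in Vor_f(v_i)$ and let $\gamma\colon[0,L]\to S$ be its unique shortest geodesic to $v_i$, parametrized by arclength with $\gamma(0)=v_i$, $\gamma(L)=p$, $L=d_f(p,v_i)$. By Proposition \ref{neighbor} the subarc $\gamma((0,L])$ avoids all vertices, so each interior point $q=\gamma(s_0)$, $0<s_0<L$, lies in $S\setminus V$. First I would check uniqueness of the shortest geodesic at $q$: the subsegment $\gamma|_{[0,s_0]}$ is shortest (otherwise a shorter $v_i\to q$ path concatenated with $\gamma|_{[s_0,L]}$ would beat $L$), and if $\sigma$ were a second shortest geodesic from $v_i$ to $q$, then $\sigma\ast\gamma|_{[s_0,L]}$ would be a length-minimizing, hence geodesic, path from $v_i$ to $p$, so by uniqueness of $\gamma$ it equals $\gamma$, forcing $\sigma=\gamma|_{[0,s_0]}$.

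The heart of the matter is the power inequality $d_f(q,v_i)^2-w_i<d_f(q,v_j)^2-w_j$ for every $j\ne i$. Writing $q=\gamma(s)$ and using $d_f(\gamma(s),v_i)=s$, set
\[
    h_j(s)=\bigl(d_f(\gamma(s),v_j)^2-w_j\bigr)-\bigl(s^2-w_i\bigr).
\]
At the endpoints $h_j(0)=d_f(v_i,v_j)^2-w_j+w_i>0$ since $\mathbf{w}\in W$, and $h_j(L)>0$ since $p\in Vor_f(v_i)$. I would show $h_j$ is \emph{concave}, so its minimum over $[0,L]$ is attained at an endpoint and $h_j>0$ on $(0,L)$, which is exactly the required strict inequality. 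To get concavity, develop a tubular neighborhood of the vertex-free arc $\gamma((0,L])$ isometrically into $\mathbb{E}^2$ (as in Proposition \ref{neighbor}) with $\tilde\gamma(s)=(s,0)$; every geodesic from a point of $\gamma$ to $v_j$ unrolls to a straight segment ending at one of the developed copies $\tilde v_j^{(k)}=(a_k,b_k)$ of $v_j$, and $d_f(\gamma(s),v_j)^2=\inf_k|\tilde\gamma(s)-\tilde v_j^{(k)}|^2$. Subtracting $s^2$ cancels the quadratic term, so
\[
    d_f(\gamma(s),v_j)^2-s^2=\inf_k\bigl(-2a_k s+a_k^2+b_k^2\bigr),
\]
an infimum of affine functions, hence concave; $h_j$ differs from it by the constant $w_i-w_j$.

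The step I expect to be the main obstacle is justifying the identity $d_f(\gamma(s),v_j)^2=\inf_k|\tilde\gamma(s)-\tilde v_j^{(k)}|^2$: one must develop a neighborhood of $\gamma$ large enough to contain the relevant shortest geodesics to $v_j$ and argue that every shortest connection unrolls to a straight segment toward some developed copy of $v_j$, including copies whose unrolled segment brushes a negatively curved vertex, where the angle condition of Theorem \ref{euclidean geodesic} guarantees that the unrolled path is still straight and so still furnishes a valid comparison length. Once concavity is established, $h_j>0$ on $(0,L)$ for all $j\ne i$, and together with the uniqueness proved above every interior point $q$ of $\gamma$ lies in $Vor_f(v_i)$; thus the cell is radially star-shaped, the radial contraction is continuous because $\exp_{v_i}^{-1}$ is continuous off the cut locus, and $Vor_f(v_i)$ is contractible, in particular simply connected.
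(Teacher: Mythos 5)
Your overall strategy (star-shapedness of $Vor_f(v_i)$ with respect to $v_i$, hence contractibility and simple connectivity) is sound and genuinely different from the paper's argument, but the central step is false as stated. The gap is the claimed concavity of $s\mapsto d_f(\gamma(s),v_j)^2-s^2$, which you derive from the identity $d_f(\gamma(s),v_j)^2=\inf_k|\tilde\gamma(s)-\tilde v_j^{(k)}|^2$ together with your assertion that a shortest path brushing a cone point ``unrolls to a straight segment'' by the angle condition of Theorem \ref{euclidean geodesic}. That assertion is wrong: a geodesic can pass through a vertex $v_k$ only if both side angles are $\ge\pi$, which forces $\varphi_k\ge 2\pi$; when $\varphi_k>2\pi$ the two side angles sum to $\varphi_k$, so at least one of them (and, for a shortest path into the open ``shadow'' of $v_k$, both of them) is strictly greater than $\pi$, and the developed image is a bent path, not a segment. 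Consequently the identity fails (the developed straight-line distance is strictly shorter than the true distance, by the law of cosines), and, worse, the conclusion itself fails: in the shadow region one has $d_f(x,v_j)=d_f(x,v_k)+c$ with $c=d_f(v_k,v_j)$, so along $\gamma$,
\[
d_f(\gamma(s),v_j)^2-s^2=\bigl(a^2+b^2+c^2-2as\bigr)+2c\sqrt{(s-a)^2+b^2},
\]
where $(a,b)$ is the developed position of $v_k$; the last term is strictly convex in $s$ whenever $b\ne 0$, so your $h_j$ need not be concave, and positivity at the two endpoints no longer controls the interior values.

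The repair requires an idea your proposal does not contain, namely the one the paper uses in Proposition \ref{neighbor}: the hypothesis $\mathbf{w}\in W$ implies that whenever a shortest path from $x$ to $v_j$ passes through another vertex $v_k$, then $d_f(x,v_k)^2-w_k<d_f(x,v_j)^2-w_j$, so such a $v_j$ is never the power-minimizing vertex. One can then salvage your scheme by proving concavity not of each $h_j$ separately but of the pointwise minimum $s\mapsto\min_k\bigl(d_f(\gamma(s),v_k)^2-w_k\bigr)-s^2$: at every parameter value the minimum is attained by a vertex whose shortest geodesic avoids cone points, and developing that single geodesic produces a local affine upper support, which suffices for concavity of the minimum; an endpoint argument (a nonnegative convex function vanishing at both ends vanishes identically, plus a separate argument for strictness) then yields star-shapedness. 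For comparison, the paper takes a different route entirely: it obtains connectedness from Proposition \ref{neighbor} and proves simple connectivity by contradiction, using Gauss--Bonnet to show that a nontrivial loop in the cell would bound a disk containing at least one vertex, so the two equal-length geodesics from $v_i$ to the farthest point of the loop would lie in different isotopy classes fixing $V$, contradicting the unique-shortest-geodesic requirement in the definition of $Vor_f(v_i)$.
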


\begin{proof}
    According to Proposition \ref{neighbor}, we know that all points in $Vor_f(v_i)$ are path connected with $v_i$, so $Vor_f(v_i)$ is connected.

    Assume that $Vor_f(v_i)$ is not simply connected, then it contains a non-trivial closed loop. Let $\gamma \colon I \to Vor_f(v_i),\gamma(0)=\gamma(1)$ be a non-trivial closed loop without intersection in $Vor_f(v_i)$. 
    Suppose that the maximum of $d(\cdotp,v_i)$ on $\gamma$ is taken at $\gamma(t)$ where $0<t<1$, then denote that $\gamma_1$ and $\gamma_2$ are the geodesics in the isotopic class of $\gamma([t,0])$ and $\gamma([t,1])$ separately, we have $length(\gamma_1)=length(\gamma_2)$.  
    Next we only need to prove that $\gamma_1$ and $\gamma_2$ are not isotopic in $S$ fixing $V$, so that $\gamma(t) \not\subset Vor_f(v_i)$, then prove this proposition by contradiction.

    If $\gamma_1$ and $\gamma_2$ are not homotopic on $S$, then they are not isotopic. Else if they are homotopic, then there exists a simply connected area $U$ on $S$ with $\gamma$ as the boundary. 
    Let $\alpha,\beta > 0$ be the angles at $\gamma(0)$ and $\gamma(t)$,
    and other point on the $U$ boundary is local geodesic. 
    According to Gauss-Bonnet theorem, the sum of the discrete curvature of all vertices in $U$ is $\alpha + \beta > 0$, thus there is at least one vertex in $U$, then $\gamma_1$ and $\gamma_2$ are not isotopic fixing any vertex in $U$.
\end{proof}

\begin{prop}\label{line sigment}	
    $\partial Vor_f(v_i)$ is a loop connected by finite number of straight line segments from end to end.
\end{prop}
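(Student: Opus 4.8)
The plan is to study $\partial Vor_f(v_i) = \overline{Vor_f(v_i)} \setminus Vor_f(v_i)$ first locally, then globally. I would begin with a reduction: the boundary avoids every vertex. Indeed, for $j \ne i$ the constraint $\mathbf{w} \in W$ gives $w_i < d_f(v_i,v_j)^2 + w_j$, hence $d_f(v_j,v_i)^2 - w_i > -w_j = d_f(v_j,v_j)^2 - w_j$, so $v_j$ lies strictly in its own cell and $v_j \notin \overline{Vor_f(v_i)}$; and $v_i$ is interior by Proposition \ref{vertex_vor_f}. Thus $\partial Vor_f(v_i)$ is a compact subset of the flat locus $S \setminus V$, where ``straight line segment'' (a flat geodesic) is meaningful. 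Moreover, by Proposition \ref{closure} the closure is the honest weighted Voronoi cell, so a point $p$ lies on the boundary exactly when either $d_f(p,v_i)^2 - w_i = d_f(p,v_j)^2 - w_j$ for some competitor $v_j$ (a wall shared with $Vor_f(v_j)$), or $p$ carries two distinct shortest geodesics to $v_i$ of equal length (a point of the cut locus of $v_i$ lying in the cell).

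Next I would establish \emph{local straightness}. For any boundary point $p$, Proposition \ref{neighbor} develops a neighborhood of the shortest geodesic from $v_i$ to $p$ isometrically into $\mathbb{E}^2$, sending $v_i \mapsto a$. For a competitor $v_j$, developing its own shortest geodesic to $p$ into the same plane sends $v_j \mapsto b$, and the wall condition $|x - a|^2 - w_i = |x - b|^2 - w_j$ is linear in $x$ once the quadratic terms cancel, hence is a straight line (the weighted bisector, i.e.\ the radical axis). For a cut-locus point, the two shortest geodesics unfold $v_i$ to two distinct points $a, a'$ at equal distance, and the condition $|x - a| = |x - a'|$ is the perpendicular bisector, again a straight line. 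Therefore every boundary point has a neighborhood in which $\partial Vor_f(v_i)$ lies in finitely many lines, and at a point with a single active competitor it coincides with exactly one straight segment.

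For the global \emph{loop} structure I would use star-shapedness. Every $p \in \overline{Vor_f(v_i)}$ is joined to $v_i$ by a shortest geodesic lying, away from $v_i$, in $S \setminus V$, so pulling back by the exponential map from the tangent cone of angle $\varphi_i$ realizes $\overline{Vor_f(v_i)}$ as $\{(r,\theta) \mid 0 \le r \le \rho(\theta)\}$ for a positive function $\rho$ on $\mathbb{R}/\varphi_i\mathbb{Z}$, where $\rho(\theta)$ is the radius at which the ray in direction $\theta$ first meets the boundary. Continuity of $d_f$ forces $\rho$ continuous, so $\theta \mapsto \exp_{v_i}(\rho(\theta),\theta)$ is a closed curve tracing $\partial Vor_f(v_i)$ once; together with Proposition \ref{simply connected} this exhibits the boundary as a single topological circle bounding the disk $\overline{Vor_f(v_i)}$.

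The main obstacle is \emph{finiteness}: ruling out infinitely many corners accumulating along the loop. The corners are exactly the boundary points where two distinct walls meet, namely points equidistant (in weighted distance) to $v_i$ and to two further vertices $v_j, v_k$, points where a wall meets the cut locus, and branch points of the cut locus. I would control these by two finiteness inputs: there are only finitely many vertices, so only finitely many bisector relations can ever be active; and the cut locus of $v_i$ on a piecewise flat surface is a finite graph whose edges are straight segments, being the equidistant set of finitely many unfolded copies of $v_i$. To make the second input self-contained I would cover the compact cell by the finitely many flat triangles of a geodesic triangulation from Theorem \ref{triangle} and argue that each triangle contributes only finitely many segments to the boundary. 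Intersecting the compact loop with these finitely many straight loci and combining with local straightness, the loop decomposes into finitely many maximal straight segments meeting end to end, which is the assertion. I expect the delicate part to be precisely the finiteness and local finiteness of the cut locus, rather than the elementary linearity of the weighted bisector.
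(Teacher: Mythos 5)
Your local analysis --- developing neighborhoods into $\mathbb{E}^2$ and identifying walls with radical axes and cut-locus pieces with perpendicular bisectors of unfolded copies of $v_i$ --- is exactly the paper's, and your reduction showing the boundary avoids $V$ is fine. The genuine gap is in the finiteness step, which is the heart of the proposition. The inference ``there are only finitely many vertices, so only finitely many bisector relations can ever be active'' is a non sequitur: a single competitor $v_j$ contributes one wall for \emph{every} isotopy class of shortest geodesic from points of the cell to $v_j$, i.e.\ one wall per unfolded copy of $v_j$ in the development, and nothing you have said bounds the number of such copies. The same unproved bound is hidden in your second input, that the cut locus of $v_i$ is ``the equidistant set of finitely many unfolded copies of $v_i$'': the word \emph{finitely} there is precisely what must be proved. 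Your proposed self-contained fix --- cover the cell by the finitely many triangles of Theorem \ref{triangle} and ``argue that each triangle contributes only finitely many segments'' --- only relocates the problem: inside one triangle the distance to $v_j$ is a minimum over all developments of bounded-length geodesics reaching $v_j$, and ruling out infinitely many such developments is exactly the finiteness you are trying to establish. In the paper, that kind of statement (finiteness of bounded-length geodesics, proved via Arzel\`a--Ascoli) is Lemma \ref{finite_edges}, which is only introduced in Section 7 and is not available at this point.

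The paper closes finiteness without ever counting unfoldings: it shows that every point of $\partial Vor_f(v_1)$ has a neighborhood containing at most one branch point --- a branch point is isolated because three or more simultaneously active constraints pin down a single point in the development, while near a point with exactly two active constraints the boundary is a single straight segment --- and then uses compactness of $\partial Vor_f(v_1)$ (a closed subset of the compact surface $S$) to extract a finite subcover, hence finitely many branch points and finitely many maximal segments. To repair your route you would have to either prove the Arzel\`a--Ascoli lemma here, or replace the bisector/cut-locus counting with a local-isolation-plus-compactness argument of this type. A lesser issue: the star-shapedness underlying your loop structure (that the shortest geodesic from $v_i$ to a cell point stays in the cell) is asserted without proof, and you cannot quote geodesic convexity of $Vor_f(v_i)$ for it, since in the paper that convexity is a corollary \emph{of} this proposition.
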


\begin{proof}		
    Without losing the generality, suppose $p \in \partial Vor_f(v_1)$, then there exists more than two shortest geodesics connecting from $p$ to $v_i$, which are not isotopic and satisfy the identity $d_f(p,v_1)^2-w_1 = d_f(p,v_i)^2-w_i$. In some special case, $v_i=v_1$ may occurs, but the two geodesics are not isotopic.
    
    If there are exactly two such geodesics, let them be $\gamma_1$ connecting $p,v_1$ and $\gamma_2$ connecting $p,v_2$. Then, there are two open sets of $U_i \supset \gamma_i \setminus \{v_i\},i=1,2$ isometrically immersed into $\mathbb{E}^2$. Let $\gamma_{12}$ be the folded straight line connected by $\gamma_1$ and $\gamma_2$ at $p$. From Proposition \ref{neighbor}, we know that there exists an open set $U \supset \gamma_{12}\setminus \{v_1,v_2\}$ embedded into $\mathbb{E}^2$.
    The set of the points satisfies $d_f(p,v_1)^2-w_1 = d_f(p,v_2)^2-w_2$ on $\mathbb{E}^2$ is a geodesic orthogonal to the straight line connecting $v_1$ and $v_2$. Pulling it back from the isometric mapping, we get an open straight line segment passing through $p$ in $U$, so $\partial Vor_f(v_1)$ is a straight line segment near $p$.

    If there are more than three geodesics, suppose that three of them are $\gamma_i,i=1,2,3$ connecting $p,v_i$. Then, we have three open sets of $U_i \supset \gamma_i \setminus \{v_i\},i=1,2,3$ isometrically immersed into $\mathbb{E}^2$. The solution of the equations $d_f(p,v_1)^2-w_1 = d_f(p,v_2)^2-w_2 = d_f(p,v_3)^2-w_3$ on $\mathbb{E}^2$ on $\mathbb{E}^2$ is a single point in the image of $U_1$, $U_2$ and $U_3$. Pulling it back on $S$, we find that such $p$ is an isolated point since it is contained in a finite intersection of its neighborhoods. We call such $p$ a \emph{branch point}.  Note that $\partial Vor_f(v_2) \cup \partial Vor_f(v_3)$ is also some straight line segments connected at $p$, then we can conclude that the union of all the boundaries of inner Voronoi cells is locally geodesics connected together at branch points.

    Finally, we prove that the number of straight line segments around $\partial Vor_f(v_1)$ is finite. According to the two cases above, we know that for any $p \in \partial Vor_f(v_1)$, there is a neighborhood $U_p$ containing at most one branch point, and $\bigcup_{p \in \partial Vor_f(v_1)}U_p \supset \partial Vor_f(v_1)$ is an open covering. The closed surface $S$ is compact with the metric $d_f$, then the bounded closed set $\partial Vor_f(v_1)$ is also compact. So there exists a finite open covering, and each open cover has at most one branch point. Thus, the number of branch points are finite.
\end{proof}

From the proof above, we know that $Vor_f(v_i)$ is an intersection of several open Euclidean half planes restrict on a Euclidean cone, thus the following corollary holds.

\begin{cor}		
    $Vor_f(v_i)$ is a geodesic convex set.
\end{cor}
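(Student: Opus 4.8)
The plan is to prove convexity by developing the cell into the Euclidean cone at $v_i$ and reducing the statement to elementary planar convexity. By Proposition~\ref{neighbor}, every point $p\in\overline{Vor_f(v_i)}$ is joined to $v_i$ by a unique shortest geodesic lying in a flat neighborhood, so the radial geodesics emanating from $v_i$ furnish a well-defined developing map (polar coordinates based at $v_i$) that carries $\overline{Vor_f(v_i)}$ isometrically onto a region $\widetilde{V}$ of the Euclidean cone $C_{\varphi_i}$ with apex $v_i$. From the analysis in Proposition~\ref{line sigment} and the remark following it, $\widetilde{V}$ is the intersection of $C_{\varphi_i}$ with finitely many half-planes $H_1,\dots,H_m$, each bounded by a weighted bisector line and each containing the apex, the last point because near $v_i$ the strict inequality defining the cell holds by Proposition~\ref{vertex_vor_f}.

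First I would record that $\widetilde V$ is star-shaped with respect to the apex: a radial segment from the apex to any point of $\widetilde V$ lies in the sector trivially and in each $H_k$ by convexity of a half-plane containing both the apex and that point; pulling back, the shortest geodesic from any point of $Vor_f(v_i)$ to $v_i$ stays in $Vor_f(v_i)$. Next, given $p,q\in Vor_f(v_i)$ I would split into two cases according to the angular gap $\delta$ they subtend at $v_i$, measured the short way. If $\delta<\pi$, the shortest geodesic between them develops to a straight segment $\widetilde\sigma$ joining $\widetilde p$ and $\widetilde q$; since the polar angle varies monotonically along $\widetilde\sigma$ with total variation $\delta<\pi$, the segment stays inside the angular sector spanned by the two radii, hence inside the developed cone, and it lies in every $H_k$ by convexity, so $\widetilde\sigma\subset\widetilde V$ and the geodesic stays in the cell. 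If instead $\delta\ge\pi$, which forces $\varphi_i\ge 2\pi$, the shortest geodesic passes through the apex $v_i$ and is therefore a concatenation of two radial geodesics, each contained in the cell by star-shapedness.

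The main obstacle is the case $\varphi_i\ge 2\pi$, where the cone is not locally convex and the developing map is not globally injective: I would need to choose the slit used to develop $C_{\varphi_i}$ so that it avoids a neighborhood of the geodesic under consideration, and to justify carefully that it is precisely the short-way angular gap $\delta$ which determines whether the shortest geodesic develops to a straight segment or instead runs through the apex. A secondary technical point is the possible non-uniqueness of the shortest geodesic between $p$ and $q$ when one of them lies on the cut locus of the other; I would handle this by selecting any shortest geodesic and applying the same dichotomy, the star-shapedness of $\widetilde V$ guaranteeing that the route through the apex always remains available. Once these points are settled, the two cases exhaust all possibilities and the geodesic convexity of $Vor_f(v_i)$ follows.
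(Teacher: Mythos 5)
Your skeleton --- develop the cell onto the cone $C_{\varphi_i}$ along the radial shortest geodesics, observe that the developed image is cut out of the cone by the bisector half-planes (Propositions~\ref{neighbor} and~\ref{line sigment}), record star-shapedness about the apex, and split according to the angle $\delta$ subtended at $v_i$ --- is exactly the paper's argument: the paper disposes of the corollary with the single observation that $Vor_f(v_i)$ is an intersection of open Euclidean half planes restricted to a Euclidean cone. But your execution inverts the logic at the decisive step, and the claims made there are false as stated. You reason about \emph{the shortest geodesic of $S$} between $p$ and $q$: that for $\delta<\pi$ it ``develops to a straight segment'', and for $\delta\ge\pi$ it ``passes through the apex''. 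A path can be developed into the cone only if it is already known to stay inside the region covered by the developing map, which is precisely what is to be proved --- and it can fail. Concretely, let $S$ be a flat torus with $V=\{v_1\}$ and $\varphi_1=2\pi$ (any $w_1>0$ is admissible, the defining conditions of $W$ being vacuous); then $Vor_f(v_1)$ is the torus minus the cut locus of $v_1$. Two points of the cell on opposite sides of an edge of the cut locus, at ambient distance $2\epsilon$, subtend an angle $\delta<\pi$ at $v_1$, yet the ambient shortest geodesic between them crosses the cut locus, leaves the cell, and does not develop onto the segment joining $\widetilde p$ and $\widetilde q$ (which is far longer). The same happens with several vertices whenever the cell wraps around $v_i$ by a large angle. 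In particular, if ``geodesic convex'' were read as ``the ambient shortest geodesic between any two points of the set stays in the set'', the corollary itself would be false; it can only mean that \emph{some} geodesic of $S$ joining the two points lies in the set.

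The repair is to run your construction in the opposite direction, which is what the paper's half-plane remark implicitly does. For $\delta<\pi$, build the straight segment joining $\widetilde p$ and $\widetilde q$ inside the developed sector: a planar sector of angle less than $\pi$ is convex and each active bisector constraint is a planar half-plane there, so the segment lies in $\widetilde V$; its pull-back under the developing map, which is a local isometry, is then a geodesic of $S$ joining $p$ and $q$ inside $Vor_f(v_i)$. For $\delta\ge\pi$ (which indeed forces $\varphi_i\ge 2\pi$, since $\delta\le\varphi_i/2$), take the concatenation of the two radial segments: it lies in the cell by star-shapedness, and it is a genuine geodesic of $S$ because the two angles it makes at $v_i$ are $\delta\ge\pi$ and $\varphi_i-\delta\ge\delta\ge\pi$, which is the condition of Theorem~\ref{euclidean geodesic}. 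With this reversal your two cases are correct and complete the proof, and your concerns about choosing a slit and about non-uniqueness of ambient shortest geodesics disappear, since no ambient shortest geodesic is ever invoked.
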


With all the propositions above, we can finally prove Theorem \ref{vor_f}.

\begin{proof}[Proof of theorem \ref{vor_f}]
    Let all the branch points on $\bigcup_{v_i \in V} \partial Vor_f(v_i)$ be $0$-cells, and according to Proposition \ref{line sigment}, the set left after removing these $0$-cells cells is composed of finite number of unconnected open straight line segments, and let them be $1$-cells.
    From Proposition \ref{open} and \ref{simply connected}, we can set $Vor_f(v_i)$ to be a $2$-cell.

    From Proposition \ref{closure}, we know that the disjoint union of all the cells is $S$, which is a cell decomposition. This construction is determined by $d_f$ and $\mathbf{w}$ only, thus, the cell decomposition is unique.
    From Proposition \ref{line sigment} we know that this decomposition is finite, thus it is CW.
\end{proof}

\section{Weighted Delaunay triangulation}

\begin{lem}\label{dual_isotopy}
    Given a weighted Voronoi decomposition of $(S,V,d_f)$ with weight $\mathbf{w} \in W$, let $p$ be a point on a 1-cell which is the boundary of $Vor_f(v_i)$ and $Vor_f(v_j)$. Here, $v_i$ and $v_j$ could coincide. Suppose the path $\gamma$ is the geodesic lines connecting $p,v_i$ and $p,v_j$ glued at $p$, then there exists a unique geodesic on $S$ connecting $v_i$ and $v_j$ isotopic to $\gamma$ fixing $V$.
\end{lem}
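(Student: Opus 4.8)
The plan is to reduce the statement to an elementary fact about the Euclidean plane by developing a neighborhood of $\gamma$ into $\mathbb{E}^2$, and to read off both existence and uniqueness from the convexity of the weighted Voronoi cells. Write $\gamma=\gamma_1\cup\gamma_2$, where $\gamma_1$ is the shortest geodesic from $v_i$ to $p$ and $\gamma_2$ the shortest geodesic from $p$ to $v_j$, and let $\ell$ denote the $1$-cell through $p$. First I would develop the region swept out by the geodesics from $v_i$ to the points of $\ell$: since $\overline{Vor_f(v_i)}$ is geodesically convex and contains $v_i$ as its only vertex (Proposition \ref{vertex_vor_f} and the corollary to Proposition \ref{line sigment}), this family fills a flat sector $T_i$ subtending an angle $<\pi$ at $v_i$ (a single edge of a convex cell is seen from an interior point within a half-plane), so $T_i$ develops isometrically onto a planar region $\tilde T_i$ with apex $\tilde v_i$ and base the straight segment $\tilde\ell$, using that $\ell$ is a straight segment by Proposition \ref{line sigment}. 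Doing the same on the $v_j$-side and gluing along $\tilde\ell$ yields a planar region $\tilde R$ in which $\tilde v_i$ and $\tilde v_j$ lie on opposite sides of the line carrying $\tilde\ell$, because $\ell$ separates the interiors of the two cells.

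In this development $\tilde\ell$ is exactly the weighted radical axis $\{x:|x-\tilde v_i|^2-w_i=|x-\tilde v_j|^2-w_j\}$, which is perpendicular to the segment $\tilde v_i\tilde v_j$. The candidate geodesic is the straight segment $\tilde v_i\tilde v_j$: pulled back to $S$ it is geodesic inside each cell and unbent where it crosses $\ell$, hence a genuine geodesic from $v_i$ to $v_j$. To see that this segment actually meets $\tilde\ell$ I would invoke $\mathbf{w}\in W$: the point where $\tilde v_i\tilde v_j$ meets its own perpendicular radical axis lies at distance $\tfrac12\big(d_f(v_i,v_j)+(w_i-w_j)/d_f(v_i,v_j)\big)$ from $\tilde v_i$, and the two inequalities $|w_i-w_j|<d_f(v_i,v_j)^2$ packaged in the definition of $W$ are precisely what forces this crossing point $\tilde q^{*}$ to lie strictly between $\tilde v_i$ and $\tilde v_j$. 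Since $\tilde T_i$ and $\tilde T_j$ are star-shaped about their apices, the subsegments $\tilde v_i\tilde q^{*}$ and $\tilde q^{*}\tilde v_j$ stay inside $\tilde R$ and descend to $S$. Because $\tilde R$ is a disk and $\gamma$ develops to the bent path $\tilde v_i\to\tilde p\to\tilde v_j$ with $\tilde p\in\tilde\ell$, the straight segment and $\gamma$ are isotopic rel endpoints inside $\tilde R$, which gives an isotopy fixing $V$.

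For uniqueness I would argue that any geodesic $\delta$ from $v_i$ to $v_j$ isotopic to $\gamma$ fixing $V$ must cross $\ell$, and that the arc of $\delta$ in $\overline{Vor_f(v_i)}$ is a geodesic joining $v_i$ to the crossing point inside a convex cell, hence develops to a straight segment from $\tilde v_i$; the unbent crossing condition then forces $\tilde v_i$, the crossing point, and $\tilde v_j$ to be collinear, so the crossing point is $\tilde q^{*}$ and $\delta$ coincides with the segment already constructed. The case $v_i=v_j$ is handled identically, developing the two sectors as distinct lifts $\tilde v_i,\tilde v_i'$ of the single vertex on opposite sides of $\tilde\ell$.

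I expect the main obstacle to be the global control needed in the uniqueness step: showing that a geodesic merely isotopic to $\gamma$ cannot leave $T_i\cup T_j$, wander through other cells, and cross the Voronoi boundary several times while remaining in the prescribed isotopy class. Establishing that the developed segment stays inside the finite region $\tilde R$ — equivalently that $\tilde q^{*}$ falls within the actual segment $\tilde\ell$ rather than past one of its branch-point endpoints — is the delicate point, and it is exactly here that the hypothesis $\mathbf{w}\in W$, rather than merely $\mathbf{w}\in\mathbb{R}^V$, must be used.
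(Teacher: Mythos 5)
There is a genuine gap, and it sits exactly where you flagged it --- but your proposed remedy does not work. Your development $\tilde R$ consists only of the two sectors over the actual 1-cell $\ell$, whose endpoints are branch points (0-cells). The crossing point $\tilde q^{*}$ of the segment $\tilde v_i\tilde v_j$ with the radical axis can perfectly well lie \emph{beyond} one of those branch points, outside $\tilde\ell$; in that case the straight segment exits $\tilde R$ through a side wall and there is no developing map along which to pull it back to $S$ --- cone points of $S$ just outside the two cells can bend or block it. The hypothesis $\mathbf{w}\in W$ cannot rescue this: the inequality $|w_i-w_j|<d_f(v_i,v_j)^2$ controls where $\tilde q^{*}$ sits \emph{along the segment} $\tilde v_i\tilde v_j$, whereas the endpoints of $\ell$ are determined by the other vertices and weights and can truncate the radical axis on either side of $\tilde q^{*}$. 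This bad case is not a corner case; it is the entire content of the paper's proof. The paper slides the bending point along $\mu = \ell$, notes that if the bending angle reaches $\pi$ somewhere on $\ell$ one is done (your easy case), and otherwise extends $\mu$ \emph{geodesically past its endpoint} $\mu(1)$, takes the supremum $s'$ of parameters up to which the development persists on $S$, and derives a contradiction: an obstructing vertex $v_k$ at parameter $s'$ would (via Proposition \ref{vertex_vor_f} and the half-plane position of $v_k$ relative to $\partial Vor_f(v_i)\cap\partial Vor_f(v_k)$) force all points near $\mu(1)$ out of $Vor_f(v_i)$, contradicting that $\mu(1)$ is a 0-cell of $\partial Vor_f(v_i)$. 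Your proof needs this argument, or an equivalent one, to cover the case $\tilde q^{*}\notin\tilde\ell$; as written it proves the lemma only when the dual edge happens to cross the open 1-cell.

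The uniqueness step has the same status: you correctly identify that you cannot prevent a geodesic in the isotopy class from wandering through other cells and crossing the Voronoi boundary several times, but you leave that unresolved, and your convexity argument genuinely needs it. The paper sidesteps the issue entirely with Gauss--Bonnet, as in Proposition \ref{simply connected}: two distinct geodesics from $v_i$ to $v_j$ that are isotopic rel $V$ bound a disk whose boundary turning forces strictly positive total cone curvature inside, hence a vertex inside the disk, contradicting that the isotopy fixes $V$. That argument requires no control whatsoever on which cells the competing geodesic visits, so I would replace your uniqueness paragraph with it rather than try to patch the cell-by-cell analysis.
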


\begin{proof}		
    Suppose that the 1-cell to which $p$ belongs is $\mu \colon I \to S$. From Proposition \ref{neighbor}, for any $t \in I$, the path glued by the two geodesics connecting $v_i$, $\mu(t)$ and $v_j$ is denoted by $\gamma_t$.
    If there exists $t \in I$ such that the angle of $\gamma_t$ at $\mu(t)$ is $\pi$, then $\gamma_t$ is the geodesic isotopic to $\gamma$. Else, we only need to consider the case that the angle can not achieve $\pi$ when $t \in I$. 

    Without loss of generality, we assume that the angle on the side of $\mu(1)$ is less than $\pi$. Now we extend $\mu(t)$ in the geodesic direction of $t>1$.
    If we embed $\gamma_t$ and $\mu(t)$ and their neighborhoods into $\mathbb{E}^2$, then there exists $s>1$ such that $\gamma_s$ is the geodesic connecting $v_i$ and $v_j$ and isotopic to $\gamma_t$. To prove this lemma, we only need to show that this extension can be done on $S$ as well.

    If not, let $s'$ be the supremum of $t$ which allows this isometrically embedding hold on $S$, thus we suppose that $s' \le s$. Then we know that $\gamma_{s'}$ passes through some vertex, e.g, the geodesic connecting $v_i$ and $\mu(s')$ passes through $v_k$. Since $s' \le s$, the angle $\bigl<v_i,\mu(s'),\mu(1)\bigr>$ is no less than $\frac{\pi}{2}$. From Proposition \ref{vertex_vor_f}, we know that $v_k$, $\mu(s')$, and then $\mu(1)$ locate at the same open half space of the 1-cell with respect to $\partial Vor_f(v_i) \cap \partial Vor_f(v_k)$. Hence, the point near $\mu(1)$ cannot be in $Vor_f(v_i)$. However, $\mu(1)$ is a 0-cell on $\partial Vor_f(v_i)$, of which any neighborhood contains points of $Vor_f(v_i)$. Then we get the contradiction, and prove the existence geodesic on $S$ isotopic to any $\gamma_t$ fixing $V$.

    The uniqueness is similar to the Proposition \ref{simply connected}, which can be easily proved by using Gauss-Bonnet theorem.

\end{proof}

Now we define the weighted Delaunay decomposition on $(S,V,d_f)$ with $\mathbf{w} \in W$. Recall that the classical proof of Poincaré duality theorem
\begin{equation}
    H^k(S;\mathbb{Z}_2) \cong H_{2-k}(S;\mathbb{Z}_2) \quad k=0,1,2
\end{equation}
used the construction of the barycentric subdivision and linked sub-complex. Here we use the same way to construct duality. Let the only vertex $v_i \in Vor_f(v_i)$ be the center of $Vor_f(v_i)$, and the midpoint of 1-cells be the center of 1-cells, we barycentric subdivide and glue the linked cells around 0-cells of the weighted Voronoi decomposition, then we have a dual graph topologically. From Lemma \ref{dual_isotopy}, there exist an isotopy map fixing $V$ covering the isotopy of every edge of the dual graph to the unique geodesic. Thus, we have the following definition.

\begin{defn}\label{del_def}
    Given a piecewise flat surface $(S,V,d_f)$ with weight $\mathbf{w} \in W$, there exist a unique CW decomposition whose 1-cells are geodesics connecting vertices, called \emph{weighted Delaunay tessellation}, to be the dual graph of the weighted Voronoi decomposition.

    The \emph{weighted Delaunay triangulation} is to subdivide polygon faces of the weighted Delaunay tessellation into triangles by connecting some geodesic diagonals in any ways.
\end{defn}

There are several equivalent definitions of weighted Delaunay triangulation, and we need to verify that our definition is equivalent to one of them, e.g, Glickenstein's definition in \cite{glickenstein2008geometric}. Given a piecewise flat surface $(S,V,d_f)$ and its geodesic triangulation $\mathcal{T}=(V,E,F)$ and a weight function $\mathbf{w} \in W$, for any $f_{ijk} \in F$, if $O_{ijk} \in S$ and $w_{ijk} \in \mathbb{R}$ exist, such that the equations
\begin{equation}
    \begin{aligned}
        d_f(O_{ijk},v_i)^2-w_{ijk}&=w_i\\
        d_f(O_{ijk},v_j)^2-w_{ijk}&=w_j\\
        d_f(O_{ijk},v_k)^2-w_{ijk}&=w_k
    \end{aligned}
\end{equation}
hold, then $O_{ijk}$ and $w_{ijk}$ are unique. Moreover, if $v_l \in V$ satisfies
\begin{equation}
    d_f(O_{ijk},v_l)^2-w_{ijk} \ge w_l,
\end{equation}
then $\mathcal{T}$ is called weighted Delaunay triangulation. Obviously, our definition \ref{del_def} is the same with this definition, where $O_{ijk}$ is the ``branch points'' or 0-cells in the weighted Voronoi decomposition.

In particular, the same definition is given in the paper \cite{springborn2008variational} for the case of weight function $\mathbf{w} \in W'$, and then $w_{ijk}>0$. A circle, called \emph{orthogonal circle}, centered at $O_{ijk}$ with radius $\rho_{ijk} \coloneqq \sqrt{w_{ijk}}$ is orthogonal to three circles centered at $v_i,v_j,v_k$ with radius $\sqrt{w_i},\sqrt{w_j},\sqrt{w_k}$. 
We immerse the orthogonal circle into $S$, then any other circle centered at $v_l \in V$ with radius $\sqrt{w_l}$ does not intersect with the orthogonal circle, or intersect with the intersection angle no more than $\frac{\pi}{2}$. If this property holds for every face of the triangulation $\mathcal{T}$, then $\mathcal{T}$ is weighted Delaunay.

\begin{defn}\label{loc_del}
    Given a face $f_{ijk}$, we denote the distance between $O_{ijk}$ and edge $e_{ij}$ by $h_{ij,k}$, which is positive when $O_{ijk}$ and $v_k$ are on the same side of $e_{ij}$, and negative when they are on the different sides.
    
    An edge $e_{ij}$ is called \emph{local weighted Delaunay} if $h_{ij,k} + h_{ij,l} \ge 0$.
\end{defn}

Note that $h_{ij,k} + h_{ij,l} \ge 0$ implies the existence of $e_{ij}$.
Given a hinge $\Diamond_{ij; kl}$, if the two orthogonal circles of faces $f_{ijk}$ and $f_{ijl}$ do not intersect or intersect with the angle no more than $\frac{\pi}{2}$, then the edge $e_{ij}$ is local weighted Delaunay. If $h_{ij,k} + h_{ij,l} = 0$, we delete the edge $e_{ij}$ and insert the edge $e_{kl}$, then $e_{kl}$ is local weighted Delaunay. This operation is called \emph{diagonal switch}.

From the lemmas and definitions above, we derive main theorem of this section.

\begin{thm}\label{local_whole_f}
    Given a  piecewise flat surface $(S,V,d_f)$ with a weight function $\mathbf{w} \in W$, we know that:
    \begin{itemize}
        \item The weighted Delaunay triangulation exists.
        \item The triangulation is weighted Delaunay, if and only if all the edges are local weighted Delaunay \cite{glickenstein2008geometric}\cite{bobenko2007discrete}\cite{gorlina2011weighted}.
        \item The weighted Delaunay triangulation is unique up to finite diagonal switches.
    \end{itemize}
\end{thm}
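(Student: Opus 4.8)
The plan is to establish the three assertions in turn, leaning on the fact that the weighted Voronoi decomposition, and hence its dual tessellation, are already known to exist and be unique.

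\emph{Existence.} The first bullet requires nothing beyond Definition \ref{del_def}: Theorem \ref{vor_f} produces the weighted Voronoi decomposition, Lemma \ref{dual_isotopy} realizes each dual edge as a unique geodesic isotopic to the glued path $\gamma$ fixing $V$, and triangulating the polygon faces of the resulting weighted Delaunay tessellation by geodesic diagonals yields a weighted Delaunay triangulation. The only point to verify is that the $O_{ijk}$ attached to each face coincides with the branch point ($0$-cell) of the Voronoi decomposition incident to the corresponding cells, which is immediate from the defining equations for $O_{ijk}$ and $w_{ijk}$ together with Proposition \ref{line sigment}.

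\emph{Local $\Leftrightarrow$ global.} For the ``only if'' direction I would argue that if $\mathcal{T}$ is weighted Delaunay then the empty-orthogonal-circle inequality $d_f(O_{ijk},v_l)^2-w_{ijk}\ge w_l$ holds for every face and every $l$; restricting to the two faces $f_{ijk},f_{ijl}$ of a hinge $\Diamond_{ij;kl}$ and developing them into $\mathbb{E}^2$ as in Proposition \ref{neighbor}, this translates by the standard power-of-a-point computation into $h_{ij,k}+h_{ij,l}\ge 0$, i.e.\ $e_{ij}$ is local weighted Delaunay. The ``if'' direction is the heart of the theorem, and I expect it to be the main obstacle. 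Suppose every edge of $\mathcal{T}$ is local weighted Delaunay. The strategy is to reconstruct the Voronoi decomposition from $\mathcal{T}$ and invoke its uniqueness: assign to each face $f_{ijk}$ its center $O_{ijk}$, and join the centers of two faces sharing $e_{ij}$ by the geodesic perpendicular bisector dual to $e_{ij}$, as in Proposition \ref{line sigment}. The inequality $h_{ij,k}+h_{ij,l}\ge 0$ guarantees that across each edge the centers lie on the correct side of the power line, so the glued family of bisectors bounds around every $v_i$ a geodesically convex region consisting precisely of the points weighted-closer to $v_i$; this region satisfies the two defining conditions of $Vor_f(v_i)$, and by the uniqueness part of Theorem \ref{vor_f} the assembled decomposition must be the canonical weighted Voronoi decomposition, whence $\mathcal{T}$ refines the weighted Delaunay tessellation and is globally weighted Delaunay.

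The delicate point is that this gluing must be controlled globally on the polyhedral surface rather than inside a single developed chart: one must rule out the centers winding around a cone point or the assembled bisectors failing to close up into the Voronoi $1$-skeleton, using Proposition \ref{vertex_vor_f} and a Gauss--Bonnet argument to exclude spurious vertices, exactly as in Proposition \ref{simply connected}. An equivalent hands-on alternative is the classical path-crossing argument: assuming some $v_l$ lies strictly inside the orthogonal circle of $f_{ijk}$, develop a chain of faces from $f_{ijk}$ toward $v_l$ as permitted by Proposition \ref{neighbor} and show that each local-Delaunay edge crossing forces the running orthogonal circle to exclude $v_l$, a contradiction; here the obstacle is again the developing across cone points.

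\emph{Uniqueness up to diagonal switches.} Any two weighted Delaunay triangulations refine the same canonical tessellation, by the uniqueness just established, so they differ only in the choice of geodesic diagonals inside the finitely many polygon faces of the tessellation. Across every interior diagonal of such a face the co-orthocircular identity $h_{ij,k}+h_{ij,l}=0$ holds, so each is a legal diagonal switch; since the triangulations of a convex polygon are connected under diagonal flips and there are finitely many faces, any two weighted Delaunay triangulations are related by finitely many diagonal switches.
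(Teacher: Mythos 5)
Your treatment of the first and third bullets coincides with the paper's: existence is exactly the construction of Definition \ref{del_def} (Theorem \ref{vor_f} plus Lemma \ref{dual_isotopy}), and uniqueness up to diagonal switches follows because every weighted Delaunay triangulation refines the unique tessellation, whose polygon faces are co-orthocircular, so each interior diagonal satisfies $h_{ij,k}+h_{ij,l}=0$ and flip-connectivity of polygon triangulations finishes the argument. Where you diverge is the second bullet: the paper does not prove the local-to-global equivalence at all --- the citations \cite{glickenstein2008geometric}, \cite{bobenko2007discrete}, \cite{gorlina2011weighted} are embedded in the statement precisely because that item is imported from the literature --- whereas you attempt a direct proof. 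Your ``only if'' direction (hinge development plus the power-of-a-point computation) is fine, but your ``if'' direction is not yet a proof: you correctly identify the real difficulty (making the glued perpendicular bisectors close up globally into a Voronoi $1$-skeleton, and developing chains of faces across cone points) and then defer it to ``a Gauss--Bonnet argument'' without carrying it out. So either complete that argument or cite it, as the paper does; as written it is a sketch with an acknowledged hole. Finally, you omit a case the paper treats explicitly and that is specific to its $\Delta$-complex notion of triangulation: a face two of whose edges are glued to each other. The paper checks that such a face is isosceles, that $O_{ijj}$ lies on the axis of symmetry, and hence that $2h_{ij,j}>0$, so a self-glued edge is automatically local weighted Delaunay; without this check the local condition in your second bullet is not even well-posed for such edges, since there is no genuine hinge $\Diamond_{ij;kl}$ there.
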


The construction of definition \ref{del_def} implies the existence. Note that when $\mathbf{w} \notin W$ the weighted Delaunay triangulation may not exist, which is discussed in detail in \cite{gorlina2011weighted}.

Moreover, we have to discuss a special case only occurs on $\Delta$-complex triangulation, which is that two edges of one face glue together. No matter how they glue, forward or reverse, the face would be an isosceles triangle and the weights of two base angles are the same. Thus, the center of the orthogonal circle $O_{ijj}$ locates at the axis of symmetry, then $2h_{ij,j}>0$ and the self-glued edge is weighted Delaunay.

\section{Hyperbolic polyhedra surface}

\begin{lem}\label{cosh_ratio_cosh}
    Let $p_1,p_2 \in \mathbb{H}^2$ and $r_1,r_2 \in (0,d(p_1,p_2))$, then the set of all points $q \in \mathbb{H}^2$ satisfied
    \begin{equation}\label{vor_cosh}
        \frac{\cosh d(q,p_1)}{\cosh r_1}=\frac{\cosh d(q,p_2)}{\cosh r_2}
    \end{equation}
    is a geodesic, which is orthogonal to the geodesic segment connecting $p_1,p_2$. If the equal sign in the equation\eqref{vor_cosh} is changed to the less-than sign, this will be the half space containing $p_1$ partitioned by the geodesic, and if it is changed to the greater-than sign, this will be the half space containing $p_2$.
\end{lem}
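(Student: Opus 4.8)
The plan is to pass to the hyperboloid (Minkowski) model of $\mathbb{H}^2$, where equation \eqref{vor_cosh} linearizes. Represent points of $\mathbb{H}^2$ as the upper sheet $\{x \in \mathbb{R}^{2,1} : \langle x,x\rangle = -1\}$ for the inner product of signature $(+,+,-)$, so that $\cosh d(x,y) = -\langle x,y\rangle$ for all $x,y \in \mathbb{H}^2$. Writing $a = \cosh r_1$, $b = \cosh r_2$ and $D = d(p_1,p_2)$, equation \eqref{vor_cosh} becomes $\cosh r_2\,\langle q,p_1\rangle = \cosh r_1\,\langle q,p_2\rangle$, that is $\langle q, n\rangle = 0$ with $n := b\,p_1 - a\,p_2$. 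Thus the solution set is the intersection of $\mathbb{H}^2$ with a linear hyperplane through the origin, and the less-than (resp.\ greater-than) version of \eqref{vor_cosh} becomes the open half-space $\langle q,n\rangle > 0$ (resp.\ $< 0$), since the inequality $\cosh r_2\cosh d(q,p_1) < \cosh r_1 \cosh d(q,p_2)$ is exactly $\langle q,n\rangle > 0$.

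The first key step is to show this intersection is a nonempty geodesic, which holds precisely when the normal $n$ is spacelike, i.e.\ $\langle n,n\rangle > 0$. Expanding with $\langle p_i,p_i\rangle = -1$ and $\langle p_1,p_2\rangle = -\cosh D$ gives $\langle n,n\rangle = 2ab\cosh D - a^2 - b^2$, so I must verify $\cosh D > \tfrac{a^2+b^2}{2ab}$. This is exactly where the hypothesis $r_1, r_2 \in (0,D)$ enters: it forces $1 < a < \cosh D$ and $1 < b < \cosh D$, and an elementary inequality (for $a,b \ge 1$ one has $\max(a,b) \ge \tfrac{a^2+b^2}{2ab}$, proved by checking the sign of $2a^2b - a^2 - b^2$) then yields $\cosh D > \max(a,b) \ge \tfrac{a^2+b^2}{2ab}$. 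I expect this elementary estimate to be the main obstacle in the sense that it is the one place the precise range of the radii is used; everything else is bookkeeping.

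For orthogonality, I would note that the geodesic through $p_1,p_2$ is $\mathbb{H}^2 \cap \mathrm{span}(p_1,p_2)$, cut out by the hyperplane with spacelike normal $m$ characterized by $\langle m,p_1\rangle = \langle m,p_2\rangle = 0$. Since $n = b\,p_1 - a\,p_2 \in \mathrm{span}(p_1,p_2)$, we get $\langle n,m\rangle = 0$, and for two intersecting geodesics with spacelike normals, orthogonality of the geodesics is equivalent to Minkowski-orthogonality of the normals. It remains to check that the two geodesics actually meet: moving $q$ along the segment from $p_1$ to $p_2$, the quantity $\langle q,n\rangle = a\cosh d(q,p_2) - b\cosh d(q,p_1)$ changes from $a\cosh D - b > 0$ at $p_1$ to $a - b\cosh D < 0$ at $p_2$, so by the intermediate value theorem there is an intersection point on the segment.

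Finally, the half-space statement follows from the same sign computation: at $q = p_1$ we get $\langle q,n\rangle = a\cosh D - b > 0$ and at $q = p_2$ we get $a - b\cosh D < 0$, so the side $\langle q,n\rangle > 0$, which is the less-than side of \eqref{vor_cosh}, is the one containing $p_1$, while the side $\langle q,n\rangle < 0$ contains $p_2$, as claimed. As an alternative I would also mention Fermi coordinates along the $p_1 p_2$-geodesic: writing $\cosh d(q,p_i) = \cosh\rho\,\cosh(x - x_i)$ for the orthogonal distance $\rho$ and the signed foot-point coordinate $x$, the factor $\cosh\rho$ cancels in \eqref{vor_cosh}, so the level set depends only on $x$ and is automatically a geodesic perpendicular to the base geodesic; this route makes the geometry transparent but still requires the same intermediate-value argument for the existence of the solution.
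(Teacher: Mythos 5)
Your proof is correct, but it takes a genuinely different route from the paper. The paper works intrinsically in $\mathbb{H}^2$: it defines $h(p)=\frac{\cosh d(p,p_1)}{\cosh r_1}-\frac{\cosh d(p,p_2)}{\cosh r_2}$, uses the hypothesis on $r_1,r_2$ only to get $h(p_1)<0<h(p_2)$ and hence (by the intermediate value theorem) a zero $p_0$ on the segment $p_1p_2$, shows the perpendicular $\mu$ through $p_0$ lies in the zero set via the hyperbolic Pythagorean identity $\cosh d(q,p_i)=\cosh d(p_0,p_i)\cosh d(q,p_0)$, and then must do extra work to show there are \emph{no other} zeros, by projecting onto the extended geodesic and running a monotonicity (derivative) computation there --- in effect your closing ``Fermi coordinates'' remark is the paper's actual proof. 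Your main argument instead linearizes everything in the hyperboloid model: the level set becomes the hyperplane section $\{\langle q,n\rangle=0\}$ with $n=\cosh r_2\,p_1-\cosh r_1\,p_2$, so the ``is a geodesic'' conclusion and the identification of the two half-spaces by sign are structural, with no need to exclude spurious zeros; the hypothesis $r_1,r_2\in(0,d(p_1,p_2))$ is isolated in the single spacelike-normal inequality $2ab\cosh D>a^2+b^2$ (your elementary estimate via $\cosh D>\max(a,b)\ge\frac{a^2+b^2}{2ab}$ is correct), and orthogonality reduces to $\langle n,m\rangle=0$ because $n\in\mathrm{span}(p_1,p_2)$, plus your intermediate-value check that the two geodesics actually meet. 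What each approach buys: yours is shorter and conceptually cleaner, and it would also transfer directly to the paper's Lemma \ref{gbh_sinh}, since $\sinh$ of the distance to a geodesic is likewise linear in the Minkowski model; the paper's is self-contained at the level of hyperbolic trigonometry, which is what lets the author reuse the identical template (via the quadrilateral formula of Lemma \ref{cosh_ratio}) for the boundary-geodesic case without introducing the hyperboloid formalism.
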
	

\begin{proof}
  
    Let the geodesic connecting $p_1,p_2$ be $\gamma$, define
    \begin{equation}
        h \colon \mathbb{H}^2 \to \mathbb{R} \quad
        p \mapsto \frac{\cosh d(p,p_1)}{\cosh r_1}-\frac{\cosh d(p,p_2)}{\cosh r_2}.
    \end{equation}

    As $0<r_1,r_2<d(p_1,p_2)$, we know $h(p_1)<0<h(p_2)$. From the continuity of the distance function, there exists $p_0 \in \gamma$ such that $h(p_0)=0$. Denote the geodesic passing through $p_0$ and perpendicular to $\gamma$ by $\mu$, for any $q \in \mu$, from the hyperbolic cosine triangle formula, we have
    \begin{equation}
        \begin{aligned}
            \cosh d(q,p_1)&=\cosh d(p_0,p_1)\cosh d(q,p_0),\\
            \cosh d(q,p_2)&=\cosh d(p_0,p_2)\cosh d(q,p_0).
        \end{aligned}
    \end{equation}
    Thus, $h(q)=0$.
    
    Now we show that $h$ has no other zero except $\mu$. 
    Let $\tilde{\gamma}$ be the extension geodesic of $\gamma$, suppose $q'$ is the orthogonal projection of a point $p'$ on $\tilde{\gamma}$, then $h(q')=h(p')\cosh d(q',p')$. Therefore, we need to prove that $p_0$ is the only zero on $\tilde{\gamma}$. Due to symmetry, it is only necessary to show that $h(p')<0$ when $p'$ and $p_1$ are on the same side of $p_0$. Compute the derivative
    \begin{equation}
        \frac{d}{dx}\frac{\cosh(d(p_0,p_1)-x)}{\cosh(d(p_0,p_2)+x)}
        =-\frac{\sinh(d(p_0,p_1)+d(p_0,p_2))}{\cosh^2(d(p_0,p_2)+x)}<0.
    \end{equation}    
    When $x>0$,
    \begin{equation}
        \frac{\cosh(d(p_0,p_1)-x)}{\cosh(d(p_0,p_2)+x)}<
        \frac{\cosh d(p_0,p_1)}{\cosh d(p_0,p_2)}=
        \frac{\cosh r_1}{\cosh r_2}
    \end{equation}
    where $x=d(p',p_0)$. Then we prove the equality, and the inequalities become trivial.
\end{proof}

Similar to the case of piecewise flat surfaces, we define the weight function on piecewise hyperbolic surface $(S,V,d_h)$, and get the weighted Voronoi decomposition. From now on, instead of using the square of the radius, we use the radius as the weight directly. Define the radii as $\mathbf{r} \colon V \to \mathbb{R}_{>0},\, v_i \mapsto r_i$ or $\mathbf{r} \in \mathbb{R}_{>0}^V$, the domain of definition $W,W'$ is revised to
\begin{equation}
    \begin{aligned}
        R& = \left\{\,\mathbf{r} \in \mathbb{R}_{>0}^V \mid 
        r_i<\sqrt{d_f(v_i,v_j)^2+r_j^2},\forall j \ne i \,\right\}
            \\
        R'& = \left\{\,\mathbf{r} \in \mathbb{R}_{>0}^V \mid
        r_i<\mathrm{Inj}(v_i) \mbox{ and }
        \forall i \ne j,r_i+r_j<d_f(v_i,v_j) \,\right\},
    \end{aligned}
\end{equation}
and the definition of inner Voronoi cell should be replaced correspondingly. For the case of piecewise hyperbolic surface, strictly speaking, the domains of weight function should be
\begin{equation}
    \begin{aligned}
        R& = \left\{\,\mathbf{r} \in \mathbb{R}_{>0}^V \mid 
        r_i<\arcch\left(\cosh(r_j)d_h(v_i,v_j)\right) ,\forall j \ne i \,\right\}
            \\
        R'& = \left\{\,\mathbf{r} \in \mathbb{R}_{>0}^V \mid
        r_i<\mathrm{Inj}(v_i) \mbox{ and }
        \forall i \ne j,r_i+r_j<d_h(v_i,v_j) \,\right\},
    \end{aligned}
\end{equation}
The only difference between the two cases is the subscript of distance, so we use the same symbol and differ them from context.

In this section, We aim to prove that the theorems for piecewise flat surface holds for piecewise hyperbolic surface as well.

\begin{defn}
    Given a piecewise hyperbolic surface $(S,V,d_h)$ and a weight function $\mathbf{r} \in R$, let $p \in S$, if there exists a unique geodesic connecting $p$ and $v_i$ whose length is equal to the distance $d_h(p,v_i)$, and for any $j \ne i$,
    \begin{equation}
        \frac{\cosh d_h(p,v_i)}{\cosh r_i}<\frac{\cosh d_h(p,v_j)}{\cosh r_j}
    \end{equation}
    is satisfied
    holds, then the set of point $p$ satisfying the conditions is called \emph{inner Voronoi cell (of piecewise hyperbolic surface case)} with respect to the vertex $v_i$, denoted by $Vor_h(v_i)$.
\end{defn}

\begin{thm}\label{vor_h}		
    There exists a unique CW decomposition of $S$, called \emph{weighted Voronoi decomposition}, whose 2-cells are $Vor_h(v_i)$ where $v_i \in V$.
\end{thm}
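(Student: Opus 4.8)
The plan is to transplant the proof of Theorem \ref{vor_f} to the hyperbolic setting essentially line by line, making two systematic substitutions: the Euclidean weighted distance $d_f(p,v_i)^2-w_i$ is everywhere replaced by the ratio $\frac{\cosh d_h(p,v_i)}{\cosh r_i}$, and the elementary fact that a Euclidean weighted bisector is a straight line orthogonal to the segment joining two centers is replaced by Lemma \ref{cosh_ratio_cosh}. Concretely I would re-establish the hyperbolic analogues of Propositions \ref{vertex_vor_f}--\ref{line sigment} in the same order and then assemble the CW decomposition, with its uniqueness, exactly as in the proof of Theorem \ref{vor_f}.

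First I would check the analogue of Proposition \ref{vertex_vor_f}. Since $d_h(v_i,v_i)=0$, membership $v_i\in Vor_h(v_i)$ amounts to $\frac{1}{\cosh r_i}<\frac{\cosh d_h(v_i,v_j)}{\cosh r_j}$, i.e. $\cosh r_j<\cosh r_i\cosh d_h(v_i,v_j)$, which is precisely the defining inequality of $R$; the same inequality with $i,j$ exchanged gives $v_j\notin Vor_h(v_i)$. Openness (Proposition \ref{open}) follows verbatim from continuity of $d_h$ and the gap between the shortest and second-shortest geodesic lengths, and the closure statement (Proposition \ref{closure}) transfers unchanged: $Vor_h(v_i)$ differs from the open weighted cell only by points carrying two non-isotopic realizing geodesics, which lie in the critical set of $d_h(\cdot,v_i)$, so Sard's theorem applies since $d_h$ is smooth off $V$.

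The one step requiring a genuinely hyperbolic computation, and the place I expect the only real obstacle, is the analogue of Proposition \ref{neighbor}: ruling out that the shortest geodesic $\gamma$ from $p\in\overline{Vor_h(v_i)}$ to $v_i$ passes through another vertex $v_k$. Writing $a=d_h(p,v_k)$ and $b=d_h(v_k,v_i)$, so that $d_h(p,v_i)=a+b$, the hyperbolic addition formula gives $\cosh d_h(p,v_i)=\cosh(a+b)\ge\cosh a\cosh b$, and the defining inequality of $R$ supplies $\cosh r_i<\cosh r_k\cosh b$. Combining these,
\[
\frac{\cosh d_h(p,v_k)}{\cosh r_k}=\frac{\cosh a}{\cosh r_k}<\frac{\cosh a\cosh b}{\cosh r_i}\le\frac{\cosh d_h(p,v_i)}{\cosh r_i},
\]
which contradicts $p\in\overline{Vor_h(v_i)}$; the remaining immersion-versus-embedding argument is identical to the Euclidean one. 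Simple connectedness (Proposition \ref{simply connected}) then transfers with only a cosmetic change, since the Gauss--Bonnet computation acquires a positive hyperbolic area term and therefore still forces an interior cone point, making the two geodesic halves of a putative non-trivial loop non-isotopic.

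Finally, to see that $\partial Vor_h(v_i)$ is a finite union of geodesic arcs (Proposition \ref{line sigment}) I would invoke Lemma \ref{cosh_ratio_cosh} in place of the perpendicular-bisector fact: near a boundary point with exactly two realizing geodesics, the locus $\frac{\cosh d_h(\cdot,v_i)}{\cosh r_i}=\frac{\cosh d_h(\cdot,v_j)}{\cosh r_j}$ pulls back, through the local isometric embedding into $\mathbb{H}^2$, to a genuine geodesic orthogonal to the segment joining $v_i$ and $v_j$; points with three realizing geodesics are isolated branch points, and compactness of $S$ bounds their number. Taking the branch points as $0$-cells, the open geodesic arcs as $1$-cells, and each $Vor_h(v_i)$ as a $2$-cell, the assembly and uniqueness arguments from the proof of Theorem \ref{vor_f} apply without change, completing the proof.
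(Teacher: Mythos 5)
Your proposal is correct and takes essentially the same approach as the paper: the paper proves Theorem \ref{vor_h} precisely by transplanting the proof of Theorem \ref{vor_f}, listing exactly the modifications you identify (the $R$-inequality check for Proposition \ref{vertex_vor_f}, the $\cosh$-addition estimate ruling out vertices on the shortest geodesic in Proposition \ref{neighbor}, and the hyperbolic area term in the Gauss--Bonnet argument of Proposition \ref{simply connected}), with Lemma \ref{cosh_ratio_cosh} serving as the substitute for the Euclidean perpendicular-bisector fact in Proposition \ref{line sigment}. Your inequality chain $\frac{\cosh a}{\cosh r_k}<\frac{\cosh a\cosh b}{\cosh r_i}\le\frac{\cosh d_h(p,v_i)}{\cosh r_i}$ is in fact a cleaner rendering of the paper's corresponding (typo-marred) display, and correctly reads the definition of $R$ as $\cosh r_i<\cosh r_k\cosh d_h(v_k,v_i)$.
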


The proof of the theorem above follows the same method with that of theorem \ref{vor_f}, but some steps are slightly different, so we revise it as follows:

\begin{itemize}
    \item In Proposition \ref{vertex_vor_f}, for any $j \ne i$
    \begin{equation}
        \frac{\cosh d_h(v_i,v_i)}{\cosh r_i}<
        \frac{\cosh d_h(v_i,v_j)}{\cosh r_j},
    \end{equation}
    and 
    \begin{equation}
        \frac{\cosh d_h(v_j,v_i)}{\cosh r_i}>
        \frac{1}{\cosh r_j}=\frac{\cosh d_h(v_j,v_j)}{\cosh r_j}.
    \end{equation}

    \item In Proposition \ref{neighbor}, when we prove that the folded geodesic segment should not pass through vertices, we should calculate like
    \begin{equation}
        \frac{\cosh d_h(p,v_j)}{\cosh r_j}<\cosh d_h(p,v_j)<
        \frac{\cosh d_h(p,v_j)}{\cosh d_h(v_i,v_j)}<
        \frac{\cosh d_h(p,v_i)}{\cosh r_i}.
    \end{equation}

    In Proposition \ref{simply connected}, when we use Gauss-Bonnet theorem, the sum of the discrete curvatures of all vertices in a simply connected region $U$ should be $\alpha+\beta+Area(U)>0$
\end{itemize}

So far, we have proved the theorem on the existence and uniqueness of weighted Voronoi decomposition on piecewise hyperbolic surface. Then we consider its dual cell decomposition.

\begin{lem}\label{dual_isotopy2}
    Given a weighted Voronoi decomposition of $(S,V,d_h)$ with weight $\mathbf{r} \in R$, let $p$ be a point on a 1-cell which is the boundary of $Vor_h(v_i)$ and $Vor_h(v_j)$. Here, $v_i$ and $v_j$ could coincide. Suppose the path $\gamma$ is the geodesic lines connecting $p,v_i$ and $p,v_j$ glued at $p$, then there exists a unique geodesic on $S$ connecting $v_i$ and $v_j$ isotopic to $\gamma$ fixing $V$.
\end{lem}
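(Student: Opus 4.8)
The plan is to mirror the proof of Lemma \ref{dual_isotopy} essentially verbatim, replacing every Euclidean ingredient by its hyperbolic counterpart. First I would let $\mu \colon I \to S$ be the 1-cell containing $p$, and for each $t \in I$ invoke the hyperbolic version of Proposition \ref{neighbor} (the revision established just before this lemma) to embed a neighborhood of the folded path $\gamma_t$ — the concatenation at $\mu(t)$ of the shortest geodesics from $v_i$ and $v_j$ — isometrically into $\mathbb{H}^2$. If the bend angle of $\gamma_t$ at $\mu(t)$ reaches $\pi$ for some $t$, that $\gamma_t$ is already the geodesic isotopic to $\gamma$ fixing $V$ and we are done; otherwise I assume without loss of generality that the angle on the $\mu(1)$ side stays strictly below $\pi$.

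Next I would extend $\mu$ past $t=1$ along its geodesic continuation. Inside the local isometric chart in $\mathbb{H}^2$, straightness of the hyperbolic geodesic joining $v_i$ and $v_j$ forces some $s>1$ with $\gamma_s$ geodesic and isotopic to $\gamma_t$; the real task is to carry this extension onto $S$. I would argue by contradiction exactly as in the flat case: let $s'$ be the supremum of $t$ for which the isometric embedding persists on $S$, and suppose $s' \le s$. Then $\gamma_{s'}$ runs through a vertex, say the $v_i$-side segment passes through $v_k$; because $s' \le s$ the angle $\langle v_i, \mu(s'), \mu(1)\rangle \ge \frac{\pi}{2}$, and the revised Proposition \ref{vertex_vor_f} together with Lemma \ref{cosh_ratio_cosh} — which identifies $\partial Vor_h(v_i) \cap \partial Vor_h(v_k)$ with the $\cosh$-ratio locus, a genuine geodesic orthogonal to the $v_i$–$v_k$ geodesic — places $v_k$, $\mu(s')$, and hence $\mu(1)$ in the same open half-plane. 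This forces a neighborhood of $\mu(1)$ out of $Vor_h(v_i)$, contradicting that $\mu(1)$ is a 0-cell on $\partial Vor_h(v_i)$, whose every neighborhood meets $Vor_h(v_i)$.

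For uniqueness I would repeat the argument of Proposition \ref{simply connected}: two non-isotopic geodesics in the class of $\gamma$ fixing $V$ would bound a simply connected region $U$ whose Gauss–Bonnet identity now reads $\alpha + \beta + Area(U) > 0$, the extra area term being the only formal change from the flat case. Hence $U$ encloses at least one vertex, so the two geodesics cannot be isotopic fixing $V$, which yields the desired uniqueness.

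The main obstacle is verifying that the half-plane separation step survives the passage to $\mathbb{H}^2$. In the flat proof the perpendicular-bisector structure of the Voronoi boundary makes the ``same side'' conclusion immediate from linear comparison, whereas here I must lean on Lemma \ref{cosh_ratio_cosh} to guarantee that the $\cosh$-ratio bisector between $v_i$ and $v_k$ is still a true geodesic orthogonal to $v_iv_k$, so that the bound $\langle v_i, \mu(s'), \mu(1)\rangle \ge \frac{\pi}{2}$ genuinely pushes $\mu(1)$ to the far half-plane. Confirming that this orthogonality and the monotone behavior of the $\cosh$-ratio along the extension replace, cleanly, the linear Euclidean comparison is the only place where honest hyperbolic trigonometry — rather than mere symbol substitution — is required.
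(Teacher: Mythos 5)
Your proposal is correct and takes essentially the same route as the paper: the paper's proof of Lemma \ref{dual_isotopy2} consists precisely of the remark that the argument of Lemma \ref{dual_isotopy} carries over verbatim upon replacing $Vor_f$ with $Vor_h$, which is the substitution you carry out in detail. Your explicit appeal to Lemma \ref{cosh_ratio_cosh} for the half-plane separation step, and to the area-corrected Gauss--Bonnet identity for uniqueness, is exactly the content the paper leaves implicit (and matches the paper's own remark that this separation step is where the spherical case breaks down).
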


This lemma is the hyperbolic version of Lemma \ref{dual_isotopy}, whose arguments hold when replacing $Vor_f$ with $Vor_h$. It is worth noting that Lemma \ref{dual_isotopy} does not hold for spherical case because the angle $\bigl<v_i,\mu(s'),\mu(1)\bigr>$ being no less than $\frac{\pi}{2}$ is not contradicted to that $\mu(1)$ having neighborhood intersect with $Vor_s(v_i)$, where $s$ represents \emph{sphere}.

Based on the previous work, the Definition \ref{del_def} and Theorem \ref{local_whole_f} can be copied directly, without any modification.

\begin{defn}\label{del_def2}
    Given a piecewise hyperbolic surface $(S,V,d_h)$ with weight $\mathbf{r} \in R$, there exist a unique CW decomposition whose 1-cells are geodesics connecting vertices, called \emph{weighted Delaunay tessellation}, to be the dual graph of the weighted Voronoi decomposition.
    
    The \emph{weighted Delaunay triangulation} is to subdivide polygon faces of the weighted Delaunay tessellation into triangles by connecting some geodesic diagonals in any ways.
\end{defn}

Similar to the Definition \ref{loc_del} of $h_{ij,k}$ in the Euclidean case, we define the edge $e_{ij}$ as \emph{local weighted Delaunay} to be $h_{ij,k}+h_{ij,l} \ge 0$ in the hinge $\Diamond_{ij;kl}$.

When $\mathbf{r} \in R'$, for any face $f_{ijk} \in F(\mathcal{T})$, there exists a unique geodesic circle or centered at the dual point of the surface $f_{ijk}$, or $O_{ijk}$, with the radius $\arcch(\frac{\cosh d_h(O_{ijk},v_\alpha)}{\cosh r_\alpha})$, orthogonal to the geodesic circles centered at $v_\alpha$ with radius $r_\alpha$, where $\alpha=i,j,k$. Similarly, we call such a circle the orthogonal circle here. It is immersed into $S$, and does not intersect or intersects but with an intersection angle not greater than $\frac{\pi}{2}$ with any other circle centered at vertices.

\begin{thm}\label{local_whole_h}
    Given a piecewise hyperbolic surface $(S,V,d_h)$ with weight $\mathbf{r} \in R$,
    \begin{itemize}
        \item The weighted Delaunay triangulations exist.
        \item A triangulation is weighted Delaunay, if and only if all the edges are local weighted Delaunay.
        \item The weighted Delaunay triangulation is unique up to diagonal switch.
    \end{itemize}
\end{thm}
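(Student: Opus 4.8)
The plan is to mirror the proof of Theorem \ref{local_whole_f} essentially line by line, replacing $d_f$ by $d_h$ and replacing the Euclidean power $d_f(\cdot,v_i)^2-w_i$ by the ratio $\cosh d_h(\cdot,v_i)/\cosh r_i$ wherever a weighted bisector is invoked, and then handling the three bullets in turn. The one genuinely new geometric input is Lemma \ref{cosh_ratio_cosh}, which tells us that the weighted bisector is again a geodesic perpendicular to the segment joining the two centers; this is exactly the feature of the Euclidean case that the whole argument rests on, so once it is in place the structural steps carry over.

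Existence is immediate from the construction already assembled in this section. Theorem \ref{vor_h} provides a unique weighted Voronoi decomposition, Lemma \ref{dual_isotopy2} isotopes each edge of its dual graph to a unique geodesic fixing $V$, and Definition \ref{del_def2} declares the resulting geodesic tessellation, together with any geodesic refinement of its polygonal faces, to be weighted Delaunay. The only thing to check is that each polygonal $2$-cell can actually be cut into triangles by geodesic diagonals; this is automatic because every such face lifts, via the hyperbolic analogue of Proposition \ref{neighbor}, to a geodesically convex polygon in $\mathbb{H}^2$ (convexity coming from writing the cell as an intersection of half-spaces of the form furnished by Lemma \ref{cosh_ratio_cosh}).

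For the characterization, the forward implication is by restriction: if $\mathcal{T}$ is weighted Delaunay then the global orthogonal-circle emptiness condition holds, and specializing it to the two faces of a single hinge $\Diamond_{ij;kl}$ yields precisely $h_{ij,k}+h_{ij,l}\ge 0$, i.e.\ $e_{ij}$ is local weighted Delaunay. The reverse implication is Delaunay's local-to-global lemma, and I would prove it by contradiction: assuming all edges are local weighted Delaunay while some vertex $v_l$ is strictly inside the orthogonal circle of a face $f_{ijk}$, meaning $\cosh d_h(O_{ijk},v_l)/\cosh r_l<\cosh\rho_{ijk}$, take a geodesic from a point of $f_{ijk}$ to $v_l$ and follow the finite sequence of faces it crosses. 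On each crossed hinge the $\cosh$-product identity $\cosh d_h(q,p)=\cosh d_h(p_0,p)\cosh d_h(q,p_0)$ extracted in the proof of Lemma \ref{cosh_ratio_cosh} makes the weighted inside/outside status of $v_l$ monotone across a locally Delaunay edge, forcing $v_l$ to already violate the local condition at the very first crossing, a contradiction. Equivalently, and more in the spirit of this paper, one shows that local weighted Delaunay-ness everywhere makes the dual graph of $\mathcal{T}$ (through the centers $O_{ijk}$, joined across shared edges) reconstruct a CW decomposition whose $2$-cells are the $Vor_h(v_i)$, so by the uniqueness in Theorem \ref{vor_h} the triangulation must be the Delaunay one.

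Uniqueness up to diagonal switch then follows formally. The Voronoi decomposition, hence its dual tessellation, is unique by Theorem \ref{vor_h}; the only freedom in producing a triangulation is the choice of geodesic diagonals inside each polygonal $2$-cell, and two such choices differ exactly by diagonal switches across the degenerate edges where $h_{ij,k}+h_{ij,l}=0$. Since the tessellation has finitely many cells, only finitely many switches occur, giving uniqueness up to finite diagonal switches; the self-glued-edge subtlety recorded after Theorem \ref{local_whole_f} recurs verbatim, since such a face is forced to be isosceles, so $2h_{ij,j}>0$ and the identified edge is strictly Delaunay. The main obstacle is the local-to-global step: because paths on $(S,V,d_h)$ are folded geodesics, the nested-circle comparison cannot be run inside one copy of $\mathbb{H}^2$ but must be transported across the chain of local isometric embeddings supplied by the hyperbolic analogue of Proposition \ref{neighbor}, and the real content is checking that the $\cosh$-product identity keeps this comparison consistent across the gluings — the one place where the hyperbolic argument does more than copy the Euclidean one.
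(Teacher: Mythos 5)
Your proposal takes essentially the same route as the paper: the paper also gets the first and third bullets "by analogy with Theorem \ref{local_whole_f}," i.e.\ from the construction in Definition \ref{del_def2} resting on Theorem \ref{vor_h} and Lemma \ref{dual_isotopy2}, and for the second bullet it merely asserts that the methods of \cite{bobenko2007discrete} and \cite{gorlina2011weighted} extend to the hyperbolic setting --- which is exactly the $\cosh$-ratio propagation argument you sketch (indeed you give more detail than the paper does). One small correction to that sketch: the contradiction materializes at the \emph{last} crossing, where $v_l$ is the apex of the terminal face and hence has power exactly equal to its weight, not at the first crossing --- local Delaunayness of the first crossed edge is not contradicted by a far-away vertex lying inside the orthogonal circle; otherwise the argument is the standard, correct one.
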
	
The first and third items are obviously valid through an analogy with theorem \ref{local_whole_f}.
Although there is no reference directly proving the second item, the methods described in the articles \cite{bobenko2007discrete} and \cite{gorlina2011weighted} can be directly extended to this hyperbolic case.

\section{Hyperbolic surface with geodesic boundary}

The lemma below can be found in \cite{buser2010geometry}, which claims that it is easy to prove using cyclic permutations. However, no detailed procedure for the special case following is provided. For completeness, we derive the formula in the lemma by a direct computation.

\begin{lem}\label{cosh_ratio}
    Let $ABCD$ be a hyperbolic quadrilateral and the angles $A,B,C$ be $\frac{\pi}{2}$, then,
    \begin{equation}
        \cosh AB=\frac{\tanh AD}{\tanh BC} \quad
        \cosh CD=\frac{\sinh AD}{\sinh BC}.
    \end{equation}
\end{lem}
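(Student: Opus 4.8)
The plan is to cut the quadrilateral along the diagonal $BD$. Since the angles at $A$, $B$, $C$ are all $\frac{\pi}{2}$, this diagonal lies inside $ABCD$ and splits it into two right triangles that share the hypotenuse $BD$: the triangle $ABD$, whose right angle sits at $A$ (because $\angle DAB = \frac{\pi}{2}$), and the triangle $BCD$, whose right angle sits at $C$ (because $\angle BCD = \frac{\pi}{2}$). In each piece I would then invoke the standard trigonometry of a right-angled hyperbolic triangle, namely the hyperbolic Pythagorean theorem $\cosh(\mathrm{hyp}) = \cosh(\mathrm{leg})\cosh(\mathrm{leg})$ together with the leg--angle relations, in which the sine of an angle equals $\sinh(\text{opposite leg})/\sinh(\mathrm{hyp})$ and its cosine equals $\tanh(\text{adjacent leg})/\tanh(\mathrm{hyp})$. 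The essential link between the two triangles is the right angle at $B$: the diagonal decomposes it as $\angle ABD + \angle DBC = \angle ABC = \frac{\pi}{2}$, so these two sub-angles are complementary.

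First I would record the two Pythagorean identities $\cosh BD = \cosh AB \cosh AD$ from $ABD$ and $\cosh BD = \cosh BC \cosh CD$ from $BCD$, which already forces $\cosh AB \cosh AD = \cosh BC \cosh CD$. Next I would read off the angle at $B$ from each triangle: in $ABD$ one has $\sin\angle ABD = \sinh AD/\sinh BD$, and in $BCD$ one has $\cos\angle DBC = \tanh BC/\tanh BD$. Because the two sub-angles are complementary, $\sin\angle ABD = \cos\angle DBC$; equating the two expressions cancels the $\sinh BD$ factors against $\tanh BD$, leaving $\sinh AD = \tanh BC\,\cosh BD$. Feeding in $\cosh BD = \cosh BC\cosh CD$ then gives $\sinh AD = \sinh BC\,\cosh CD$, i.e.\ the second formula $\cosh CD = \sinh AD/\sinh BC$ directly. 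Finally, substituting this value of $\cosh CD$ into $\cosh AB \cosh AD = \cosh BC \cosh CD$ and simplifying yields $\cosh AB\,\cosh AD = \sinh AD/\tanh BC$, hence the first formula $\cosh AB = \tanh AD/\tanh BC$.

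The computation is short, so the only real care needed is bookkeeping: correctly matching the "opposite leg'' and "adjacent leg'' to each sub-angle at $B$ in each of the two triangles, and keeping straight which sub-angle is complementary to which. This is the step I expect to be the main source of error, and it is also why the choice of diagonal matters: cutting along $AC$ instead produces only one right triangle (at $B$), since the angle at $D$ is merely acute, and the argument does not close as cleanly. The right-angled hyperbolic triangle identities themselves I would take as known standard formulas rather than rederive.
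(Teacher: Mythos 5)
Your proof is correct, and it follows a genuinely different route from the paper. You cut along the diagonal $BD$ into two right triangles $ABD$ and $BCD$, exploit the complementarity $\angle ABD + \angle DBC = \tfrac{\pi}{2}$, and combine the standard right-triangle relations
\begin{equation*}
    \cosh BD=\cosh AB \cosh AD=\cosh BC \cosh CD,\qquad
    \sin\angle ABD=\frac{\sinh AD}{\sinh BD},\qquad
    \cos\angle DBC=\frac{\tanh BC}{\tanh BD},
\end{equation*}
which indeed yields $\sinh AD=\tanh BC\cosh BD=\sinh BC\cosh CD$ and then $\cosh AB=\tanh AD/\tanh BC$; I verified each step and the bookkeeping of opposite/adjacent legs is right. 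The paper instead works entirely in coordinates: it embeds the quadrilateral in the upper half-plane with $A=ih$, $B=i$, $C=e^{i\theta}$, $D=he^{i\phi}$, intersects the geodesic circles to get $\cos\phi=\frac{(h^2+1)}{2h}\cos\theta$, and converts via the explicit geodesic length formulas, deriving first $\cosh AB\tanh BC=\tanh AD$ and $\tanh CD=\tanh AB\cosh BC$, then the $\sinh$ identity by an algebraic manipulation. Your synthetic argument is shorter and cleaner, at the cost of taking the right-triangle trigonometric identities (Pythagorean theorem and the leg--angle relations) as known; the paper's computation is heavier but self-contained given only the half-plane length formula, which matches its stated purpose of supplying the detailed derivation that Buser's book omits. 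One small point you assert rather than prove: that $BD$ lies inside the quadrilateral. This does hold, since in hyperbolic geometry the angle sum of a quadrilateral is less than $2\pi$, so the angle at $D$ is acute, the quadrilateral is convex, and the diagonal is interior --- worth a sentence in a final write-up.
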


\begin{figure}[ht]
    \centering \includegraphics{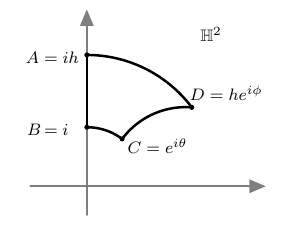}
    \caption{Coordinates of hyperbolic quadrilateral with three right angles} \label{fig:hyp31}
\end{figure}

\begin{proof}		
    With an isometric embedding from the hyperbolic quadrilateral into the hyperbolic space of the upper half plane, we map the points to the coordinates $A=i h, B=i, C=e^{i\theta}, D=h e^{i\phi}$.The set of equations of the geodesics $CD,AD$ is
    \begin{equation}
        \begin{aligned}
            \left( x - \frac{1}{\cos \theta } \right)^2 + y^2 &= \tan ^2\theta\\
            x^2+y^2&=h^2,
        \end{aligned}			
    \end{equation}
    whose solution is $x=\frac{(h^2+1)}{2} \cos \theta$, thus we have $\cos \phi = \frac{(h^2+1)}{2 h} \cos \theta$.

    By the geodesic length formula of the hyperbolic upper half plane, we know that
    \begin{equation}
        AD=\log h \quad BC=\log \cot \frac\theta2 \quad AD=\log \cot \frac\phi2.
    \end{equation}
    By substituting the above equations, we have $\cosh AB\tanh BC=\tanh AD$. In the same way, $\tanh CD =\tanh AB \cosh BC$. Therefore,
    \begin{equation}			
        \begin{aligned}
            &\cosh^{-2}CD=1-\tanh^2 CD\\
            =&1-\tanh^2 AB \cosh^2 BC
            =1-\frac{\cosh^2 AB-1}{\cosh^2 AB}\cosh^2 BC\\
            =&1-\frac{\tanh^2 AD-\tanh^2 BC}{\tanh^2 AD}\cosh^2 BC\\
            =&\frac{\tanh^2 AD(1-\cosh^2 BC)-\sinh^2 BC}{\tanh^2 AD}\\
            =&\frac{(\tanh^2 AD-1)\sinh^2 BC}{\tanh^2 AD}
            =\frac{\sinh^2 BC}{\sinh^2 AD}.\\
        \end{aligned}
    \end{equation}    
    Since the length is positive, we take the square root of both sides, then
    \[\cosh CD\sinh BC=\sinh AD.\]
\end{proof}

In the metric space $(M,d)$, consider two subsets $A,B\subset M$. We denote $d(A,B)\coloneqq \inf_{p\in A,q\in B}d(p,q)$ as the distance between the two sets. 
Let $\gamma$ be the geodesic in hyperbolic plane $\mathbb{H}^2$, for any $p \in \mathbb{H}^2$, there is a unique $p_0 \in \gamma$ such that $d(p,p_0)=d(p,\gamma)$ and the geodesic line segment between $p,p_0$ is orthogonal to $\gamma$.
Let $\gamma_1,\gamma_2$ be two geodesics that do not intersect in $\mathbb{H}^2$, there exists a unique $p_1 \in \gamma_1$ and a unique $p_2 \in \gamma_2$ such that $d(p_1,p_2)=d(\gamma_1,\gamma_2)$ and the geodesic line segment between $p_1,p_2$ is orthogonal to $\gamma_1,\gamma_2$.

By Lemma \ref{cosh_ratio}, using the proof method of Lemma \ref{cosh_ratio_cosh}, we can get the following lemma.

\begin{lem}\label{gbh_sinh}
    Let $r_1,r_2>0$ and $\gamma_1,\gamma_2$ be two geodesics that do not intersect in $\mathbb{H}^2$, then the set of all points $q \in \mathbb{H}^2$ satisfied
    \begin{equation}\label{vor_sinh}
        r_1 \sinh d(q,\gamma_1)=r_2 \sinh d(q,\gamma_2),
    \end{equation}
    is a geodesic, which is orthogonal to the geodesic segment connecting $\gamma_1,\gamma_2$. If the equal sign in the equation \eqref{vor_sinh} is changed to the less-than sign, this will be the half space containing $\gamma_1$ partitioned by the geodesic, and if it is changed to the greater-than sign, this will be the half space containing $\gamma_2$.
\end{lem}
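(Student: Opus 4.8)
The plan is to mimic the structure of the proof of Lemma \ref{cosh_ratio_cosh}, replacing the point-to-point distance by the point-to-geodesic distance and using Lemma \ref{cosh_ratio} in place of the hyperbolic law of cosines. First I would let $\delta$ be the common perpendicular geodesic segment realizing $d(\gamma_1,\gamma_2)$, meeting $\gamma_1$ at $p_1$ and $\gamma_2$ at $p_2$, and define
\begin{equation}
    h \colon \mathbb{H}^2 \to \mathbb{R}, \quad
    q \mapsto r_1 \sinh d(q,\gamma_1) - r_2 \sinh d(q,\gamma_2).
\end{equation}
On the segment $\delta$ we have $d(q,\gamma_1)+d(q,\gamma_2)=d(\gamma_1,\gamma_2)$, so as $q$ runs from $p_1$ to $p_2$ the quantity $r_1\sinh d(q,\gamma_1)$ increases from $0$ while $r_2\sinh d(q,\gamma_2)$ decreases to $0$; hence $h(p_1)<0<h(p_2)$, and by continuity there is a zero $q_0 \in \delta$. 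I would then let $\mu$ be the geodesic through $q_0$ orthogonal to $\delta$ and show $h$ vanishes on all of $\mu$.

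The key computation is the analogue of the cosine formula. For any $q \in \mu$, drop the perpendicular to $\gamma_1$; together with $\delta$ and a segment along $\gamma_1$ this forms a quadrilateral with three right angles (at the foot of the perpendicular from $q_0$ to $\gamma_1$, at $q_0$, and at the foot of the perpendicular from $q$ to $\gamma_1$), to which Lemma \ref{cosh_ratio} applies. The second identity of that lemma, $\cosh CD = \sinh AD / \sinh BC$, yields
\begin{equation}
    \sinh d(q,\gamma_1) = \cosh d(q,q_0)\, \sinh d(q_0,\gamma_1),
\end{equation}
and symmetrically $\sinh d(q,\gamma_2) = \cosh d(q,q_0)\,\sinh d(q_0,\gamma_2)$. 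Since $h(q_0)=0$ gives $r_1 \sinh d(q_0,\gamma_1) = r_2 \sinh d(q_0,\gamma_2)$, multiplying through by the common factor $\cosh d(q,q_0)$ shows $h(q)=0$ for every $q \in \mu$.

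To see that $\mu$ contains all the zeros, I would argue as in Lemma \ref{cosh_ratio_cosh}: let $\tilde\delta$ be the full geodesic extending $\delta$, and for an arbitrary point $p'$ let $q'$ be its orthogonal projection onto $\tilde\delta$. Applying the same quadrilateral identity to both terms gives $h(q') = \cosh d(q',p')\, h(p')$, so it suffices to locate the zeros of $h$ on $\tilde\delta$. On $\tilde\delta$ the two distances to $\gamma_1,\gamma_2$ are $d(q_0,\gamma_1)\mp x$ and $d(q_0,\gamma_2)\pm x$ as one moves a signed distance $x$ off $q_0$, and I would differentiate $r_1\sinh(d(q_0,\gamma_1)-x) - r_2\sinh(d(q_0,\gamma_2)+x)$ to show it is strictly monotone, so $q_0$ is its unique zero; by the projection relation $\mu$ is therefore the entire zero set. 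The sign of $h$ off $\mu$ then identifies the two half spaces, and orthogonality of $\mu$ to $\delta$ is immediate from the construction. The main obstacle is purely bookkeeping: correctly identifying the three-right-angle quadrilateral and matching its labels $AD, BC, CD$ to the distances $d(q,\gamma_i)$, $d(q_0,\gamma_i)$, $d(q,q_0)$ so that Lemma \ref{cosh_ratio} produces the clean factorization above.
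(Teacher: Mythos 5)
Your route is exactly the one the paper intends: its proof of Lemma \ref{gbh_sinh} is a one-line reduction to Lemma \ref{cosh_ratio} plus the method of Lemma \ref{cosh_ratio_cosh}, and your key computation is the right bookkeeping. For $q \in \mu$, the quadrilateral whose vertices are the foot of the perpendicular from $q$ to $\gamma_1$, then $p_1$, $q_0$, $q$ has right angles at the first three vertices, and the identity $\cosh CD = \sinh AD/\sinh BC$ gives precisely $\sinh d(q,\gamma_1)=\cosh d(q,q_0)\sinh d(q_0,\gamma_1)$; this is the correct analogue of the Pythagorean step in Lemma \ref{cosh_ratio_cosh}. (Cosmetically, the projection relation should read $h(p')=\cosh d(p',q')\,h(q')$ rather than the reverse, but only the sign equivalence is used; the paper's own proof of Lemma \ref{cosh_ratio_cosh} contains the same inversion.)

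The genuine gap is in your uniqueness step on $\tilde\delta$. The parametrization $d(q,\gamma_1)=d(q_0,\gamma_1)-x$, $d(q,\gamma_2)=d(q_0,\gamma_2)+x$ is valid only while $q$ stays on the segment between the feet $p_1,p_2$; past $p_1$ the first distance equals $x-d(q_0,\gamma_1)$ and increases again, so the function you differentiate no longer agrees with $h$ there. This is not a technicality one can wave away: in Lemma \ref{cosh_ratio_cosh} the same manoeuvre works on the whole line because $\cosh$ is even, but $\sinh$ is odd, and here the conclusion actually fails. On the ray of $\tilde\delta$ beyond $p_1$ one has $d(q,\gamma_2)-d(q,\gamma_1)\equiv d(\gamma_1,\gamma_2)$, so the ratio $\sinh d(q,\gamma_2)/\sinh d(q,\gamma_1)$ decreases to $e^{d(\gamma_1,\gamma_2)}$ at infinity; hence if $r_1/r_2>e^{d(\gamma_1,\gamma_2)}$, then $h<0$ at $p_1$ but $h>0$ far out on that ray, producing a second zero and in fact a second geodesic component of the zero set. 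Concretely, in the upper half-plane with $\gamma_1=\{|z|=1\}$ and $\gamma_2=\{|z|=R\}$, equation \eqref{vor_sinh} becomes $r_1R\bigl|\,|z|^2-1\bigr| = r_2\bigl|\,|z|^2-R^2\bigr|$, which for $r_1>r_2R$ has one solution circle with $1<|z|<R$ and a second one with $|z|<1$. So the lemma as stated, over all of $\mathbb{H}^2$ and all $r_1,r_2>0$, is false, and no argument can close your gap as written. The statement and your proof both become correct if $q$ is restricted to the intersection of the half-plane bounded by $\gamma_1$ containing $\gamma_2$ with the half-plane bounded by $\gamma_2$ containing $\gamma_1$: that region is exactly the union of the geodesics perpendicular to $\tilde\delta$ through the open segment $p_1p_2$, so your projection identity reduces everything to that segment, where your monotonicity computation is valid; and this restricted version suffices for the paper's application, since lifts of points of $\Sigma$ to the universal cover always lie on the inner side of every lift of a boundary geodesic.
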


\begin{defn}
    Given a hyperbolic surface $(\Sigma,d)$ with geodesic boundaries $\partial \Sigma=\{\gamma_i,\dots,\gamma_n\}$ and a weight function $\mathbf{r} \in \mathbb{R}^V_{>0}$, 
    the \emph{inner weighted Voronoi cell} of $v_i$ in this case, denoted by $Vor(\gamma_i)$, is defined to be the set of all $p \in S$ such that there exists a unique geodesic connecting $p$ and $\gamma_i$ whose length is the distance $d(p,\gamma_i)$, and for any $j \ne i$, the inequality
    \begin{equation}
        r_i \sinh d(v_i,\gamma_i)<r_j \sinh d(v_j,\gamma_j)
    \end{equation}
    holds.
\end{defn}

\begin{thm}\label{gbh_vor}
    Let $(S,V)$ be the related surface of $\Sigma$, namely, $\Sigma = S \setminus \bigcup_{i=1}^n D_i$ where $D_i$ are disjoint open disks containing $v_i$. Given a weight function $\mathbf{r} \in \mathbb{R}^V_{>0}$, there exists a CW decomposition of $S$, with $\{\, Vor(\gamma_i) \cup D_i \mid i=1,\dots,n \,\}$ as 2-cells. This decomposition is unique restricted on $\Sigma$.
\end{thm}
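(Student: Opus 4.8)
The plan is to transcribe the scheme of Theorems \ref{vor_f} and \ref{vor_h}, replacing each vertex $v_i$ by the boundary geodesic $\gamma_i$ and the weighted distance by $r_i \sinh d(\cdot,\gamma_i)$; the bisector computation is supplied by Lemma \ref{gbh_sinh}, which here plays exactly the role the perpendicular-bisector line played in the Euclidean case. First I would prove the analogues of Propositions \ref{vertex_vor_f}--\ref{open}: since $r_i \sinh d(p,\gamma_i) = 0$ precisely on $\gamma_i$ and this is the minimal attainable value, every point of $\gamma_i$ lies in $\overline{Vor(\gamma_i)}$ while each $\gamma_j$ with $j \ne i$ is excluded (the boundaries are disjoint, so $d(p,\gamma_j) > 0$ there). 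This is why no domain restriction on $\mathbf{r}$ is needed, in contrast to the sets $W$ and $R$ of the earlier sections. Openness is again continuity of $p \mapsto d(p,\gamma_i)$ together with the strict defining inequalities.

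Next I would establish the analogues of Propositions \ref{closure} and \ref{line sigment}. Writing $Vor'(\gamma_i)$ for the cell defined by the strict inequalities alone, the difference $Vor'(\gamma_i) \setminus Vor(\gamma_i)$ is contained in the cut locus of $\gamma_i$, i.e. the critical set of $d(\cdot,\gamma_i)$; Sard's theorem gives it measure zero and empty interior, so $\overline{Vor(\gamma_i)} = \overline{Vor'(\gamma_i)}$ and $\bigcup_i \overline{Vor(\gamma_i)} = \Sigma$. The local-embedding step (Proposition \ref{neighbor}) is in fact simpler here: $\Sigma$ carries a smooth hyperbolic metric with no interior cone points, so the distance-realizing perpendicular from $p$ to $\gamma_i$ always has a tubular neighborhood developing isometrically into $\mathbb{H}^2$, with $\gamma_i$ sent to a geodesic line. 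By Lemma \ref{gbh_sinh} each bisector arc is a geodesic orthogonal to the common perpendicular of the two boundaries, the branch points (simultaneous equality for three boundaries) are isolated, and compactness of $\Sigma$ forces their number to be finite; hence $\partial Vor(\gamma_i)$, away from $\partial\Sigma$, is a finite union of geodesic arcs.

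The genuinely new point---and the reason the statement caps the cells off---is the topology of $Vor(\gamma_i)$. By definition each $p \in Vor(\gamma_i)$ has a unique minimizing perpendicular to $\gamma_i$, so $Vor(\gamma_i)$ avoids the cut locus of $\gamma_i$, and the normal exponential map identifies the ambient cut-locus complement with a radial region over the circle $\gamma_i$. Moreover, since a bisector is a geodesic (Lemma \ref{gbh_sinh}) and two geodesics of $\mathbb{H}^2$ meet at most once, each perpendicular ray of $\gamma_i$ crosses each bisector at most once; as $r_i \sinh d(\cdot,\gamma_i)$ increases from $0$ along the ray, the intersection of $Vor(\gamma_i)$ with the ray is therefore a single interval issuing from its foot on $\gamma_i$. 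Sliding along these perpendiculars thus deformation-retracts $Vor(\gamma_i)$ onto $\gamma_i$: in contrast to $Vor_f(v_i)$, which retracts to a point and is a disk, $Vor(\gamma_i)$ is an \emph{annular} collar of $\gamma_i$, and capping it with $D_i$ along $\gamma_i$ fills the inner boundary circle so that $Vor(\gamma_i) \cup D_i$ is an open disk, hence a legitimate $2$-cell. I expect this topological bookkeeping---showing $Vor(\gamma_i)$ is exactly an annulus and carries no extra handles or holes---to be the main obstacle, with the cut-locus and radial-convexity description above the cleanest route; the Gauss--Bonnet bigon argument of Proposition \ref{simply connected}, now carrying the hyperbolic area term so that $\alpha + \beta + \mathrm{Area}(U) > 0$, disposes of any residual null-homotopic loop.

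Finally I would assemble the CW structure as in the proof of Theorem \ref{vor_f}: the branch points are the $0$-cells, the open geodesic bisector arcs are the $1$-cells, and the disks $Vor(\gamma_i) \cup D_i$ are the $2$-cells. The analogue of Proposition \ref{closure} shows the closed cells partition $\Sigma$, the caps supply $S \setminus \Sigma$, and finiteness of the bisector arcs makes the complex CW. Every $0$- and $1$-cell, and the part of each $2$-cell lying in $\Sigma$, is determined by $d$ and $\mathbf{r}$ alone, which gives uniqueness on $\Sigma$; the caps $D_i$ are mere topological fillings of the $\gamma_i$ and may be chosen freely, which is precisely why uniqueness is claimed only after restriction to $\Sigma$.
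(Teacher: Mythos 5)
Your proposal is correct and follows essentially the same route as the paper: it transcribes the proof of Theorem \ref{vor_f} with the boundary geodesics $\gamma_i$ in place of vertices, $r_i\sinh d(\cdot,\gamma_i)$ as the weighted distance, Lemma \ref{gbh_sinh} as the bisector lemma, and the observation that the absence of cone points simplifies the local-embedding step. Your radial-convexity argument showing $Vor(\gamma_i)$ is an annular collar (so that capping with $D_i$ yields a genuine $2$-cell) is a more detailed rendering of what the paper dispatches in one line ``by the uniqueness of geodesic, without using Gauss--Bonnet theorem,'' and of its remark that $Vor(\gamma_i)$ is doubly connected.
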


We just ignore the metric on $D_i$. The only reason for gluing $D_i$ on $\Sigma$ is to make $Vor(\gamma_i)$ simply connected such that it can be call a cell decomposition. If not, since $Vor(\gamma_i)$ is doubly connected, the disjoint union should be called \emph{division} instead of cell decomposition. However, we continue to use the previous name, which is called \emph{weighted Voronoi decomposition} of $\Sigma$ or $(S,V)$ for convenience.

This theorem can also copy the proof of theorem \ref{vor_f} with some modifications. Basically, we change the vertex into the boundary, and the isotopy fixing vertices into normal isotopy. Here are some steps that have changed greatly in the modification, in which we define $\gamma_i=\partial D_i$.

\begin{prop}\label{gbh_subset}
    $\gamma_i \subset Vor(\gamma_i)$ and for any $j \ne i$, $\gamma_j \nsubset Vor_f(\gamma_i)$.
\end{prop}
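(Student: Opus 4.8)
The plan is to mimic the proof of Proposition \ref{vertex_vor_f}, replacing the weighted squared distance $d_f(\cdot,v_i)^2-w_i$ by the quantity $r_i\sinh d(\cdot,\gamma_i)$, and exploiting that $\sinh$ is strictly increasing with $\sinh 0=0$, together with the elementary fact that distinct boundary components of $\partial\Sigma$ are disjoint. Throughout, $\gamma_i=\partial D_i$ plays the role that the vertex $v_i$ played in the flat case, and the comparison inequality defining $Vor(\gamma_i)$ reads $r_i\sinh d(p,\gamma_i)<r_j\sinh d(p,\gamma_j)$ for all $j\neq i$.

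For the inclusion $\gamma_i\subset Vor(\gamma_i)$, I would fix an arbitrary $p\in\gamma_i$. Since $p$ lies on $\gamma_i$ we have $d(p,\gamma_i)=0$, so the unique length-minimizing geodesic from $p$ to $\gamma_i$ is the degenerate constant path at $p$; this settles the uniqueness requirement in the definition. Moreover $r_i\sinh d(p,\gamma_i)=r_i\sinh 0=0$. For any $j\neq i$, the boundary geodesics $\gamma_i$ and $\gamma_j$ are disjoint components of $\partial\Sigma$, hence $d(p,\gamma_j)>0$; since $r_j>0$ and $\sinh$ is strictly positive on $(0,\infty)$, this gives $r_j\sinh d(p,\gamma_j)>0=r_i\sinh d(p,\gamma_i)$. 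Thus $p$ satisfies the defining strict inequality for every $j\neq i$, so $p\in Vor(\gamma_i)$, and since $p$ was arbitrary, $\gamma_i\subset Vor(\gamma_i)$.

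For the non-containment, I would in fact prove the stronger statement $\gamma_j\cap Vor(\gamma_i)=\emptyset$ for $j\neq i$. Take any $q\in\gamma_j$; then $d(q,\gamma_j)=0$, so $r_j\sinh d(q,\gamma_j)=0$. On the other hand $r_i>0$ and $d(q,\gamma_i)\geq 0$ force $r_i\sinh d(q,\gamma_i)\geq 0$. Hence the inequality $r_i\sinh d(q,\gamma_i)<r_j\sinh d(q,\gamma_j)$ demanded at the index $j$ cannot hold, so $q\notin Vor(\gamma_i)$. This yields $\gamma_j\cap Vor(\gamma_i)=\emptyset$, and in particular $\gamma_j$ is not contained in $Vor(\gamma_i)$.

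The only delicate point is the degenerate uniqueness condition for points of $\gamma_i$ itself: one must be content to regard the zero-length constant path as the unique minimizer of $d(\cdot,\gamma_i)$ at points of $\gamma_i$, just as $v_i$ was treated as its own unique closest point in Proposition \ref{vertex_vor_f}. Apart from this bookkeeping, both halves reduce entirely to the monotonicity of $\sinh$ and the strict positivity of the radii and of the pairwise boundary distances, so no genuine obstacle arises; the argument is a direct transcription of Proposition \ref{vertex_vor_f} into the $r\sinh d$ comparison supplied by Lemma \ref{gbh_sinh}.
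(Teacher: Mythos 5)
Your proposal is correct and follows essentially the same route as the paper: both arguments rest on the disjointness of distinct boundary components, so that $r_i\sinh d(p,\gamma_i)=0$ for $p\in\gamma_i$ while $r_j\sinh d(p,\gamma_j)>0$, and the reverse comparison for points on $\gamma_j$. Your write-up is in fact slightly more careful than the paper's one-line proof, since you also note the degenerate uniqueness condition and prove the stronger disjointness $\gamma_j\cap Vor(\gamma_i)=\varnothing$, but these are refinements, not a different method.
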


\begin{proof}
    For any $j \ne i$, since $\gamma_i \cap \gamma_j=\varnothing$, when $p \in \gamma_i$,  we have $0=d(p,\gamma_i)<\frac{r_j}{r_i}d(p,\gamma_j)$. Therefore, $\gamma_i \subset Vor(\gamma_i)$ and $\gamma_i \nsubset Vor(\gamma_j)$.
\end{proof}

\begin{prop}\label{open2}		
    $Vor(\gamma_i) \cup D_i$ is an open set in $S$.
\end{prop}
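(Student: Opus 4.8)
The plan is to adapt the proof of Proposition \ref{open} (openness of $Vor_f(v_i)$ in the flat case), with the extra bookkeeping forced by the fact that $Vor(\gamma_i)\cup D_i$ straddles the gluing locus $\gamma_i=\partial D_i$. I would verify interiority pointwise, splitting $Vor(\gamma_i)\cup D_i$ into three kinds of points. First, any $p\in D_i$ is automatically interior, since $D_i$ is by hypothesis an open disk in $S$ and $D_i\subset Vor(\gamma_i)\cup D_i$. Before treating the rest, I note the case split is exhaustive: by the same sign argument as in Proposition \ref{gbh_subset}, a point $q\in\gamma_j$ with $j\ne i$ has $r_j\sinh d(q,\gamma_j)=0$, which would force $r_i\sinh d(q,\gamma_i)<0$, impossible; hence $Vor(\gamma_i)$ meets $\partial\Sigma$ only along $\gamma_i$, and every remaining point of $Vor(\gamma_i)$ lies either on $\gamma_i$ or in the interior of $\Sigma$.

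For $p\in Vor(\gamma_i)$ in the interior of $\Sigma$, I repeat the argument of Proposition \ref{open} verbatim, with $r_j\sinh d(\cdot,\gamma_j)$ playing the role of $d_f(\cdot,v_j)^2-w_j$: such $p$ admits a unique shortest geodesic to $\gamma_i$, strictly shorter than any other connecting geodesic, and satisfies the finitely many strict inequalities $r_i\sinh d(p,\gamma_i)<r_j\sinh d(p,\gamma_j)$. Since $\Sigma$ is a smooth hyperbolic surface in its interior (no cone points), each $q\mapsto d(q,\gamma_j)$ is continuous, so a small neighborhood of $p$ inside $\Sigma$ — hence inside $S$ — still satisfies all these conditions, making $p$ interior. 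The genuinely new case, and the main obstacle, is a point $p\in\gamma_i$: a small $S$-neighborhood $U$ of $p$ meets both $D_i$ and $\Sigma$; the part $U\cap D_i$ lies in $D_i$ and is harmless, while for $q\in U\cap\Sigma$ I must establish (a) uniqueness of the shortest geodesic from $q$ to $\gamma_i$ and (b) the strict inequalities $r_i\sinh d(q,\gamma_i)<r_j\sinh d(q,\gamma_j)$. For (b) the quantitative key is that $d(q,\gamma_i)\to 0$ as $q\to p$, whereas $d(q,\gamma_j)\to d(p,\gamma_j)\ge d(\gamma_i,\gamma_j)>0$ for every $j\ne i$ since the boundary geodesics are pairwise disjoint; thus $r_i\sinh d(q,\gamma_i)\to 0$ while each $r_j\sinh d(q,\gamma_j)$ stays bounded below by a positive constant, so shrinking $U$ secures (b) throughout $U\cap\Sigma$.

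The delicate point is (a), the uniqueness of the foot of the perpendicular onto $\gamma_i$ near the gluing locus, and this is exactly where the naive copy of the vertex argument breaks down, because a vertex is a single point whereas $\gamma_i$ is an extended geodesic. I would handle it using the collar structure of the geodesic boundary: there is a one-sided collar of $\gamma_i$ in $\Sigma$ whose local geometry is that of $\mathbb{H}^2$ near a geodesic, where the nearest-point projection onto the geodesic is always the unique perpendicular foot; hence every $q$ close enough to $\gamma_i$ has a unique realizing geodesic, giving (a). Combining (a) and (b) yields $U\subset Vor(\gamma_i)\cup D_i$, so $p\in\gamma_i$ is interior as well. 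Collecting the three cases, every point of $Vor(\gamma_i)\cup D_i$ is interior, and the set is open in $S$.
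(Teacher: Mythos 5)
Your proof is correct and takes essentially the approach the paper intends: the paper states this proposition without any written proof, treating it as a routine adaptation of Proposition \ref{open}, which is exactly your second case, and your handling of the two new situations forced by the gluing (points of the open disk $D_i$, and points on the gluing locus $\gamma_i$ itself, where you use the collar structure for uniqueness of the perpendicular foot and the positive lower bound $d(\gamma_i,\gamma_j)>0$ for the strict inequalities) supplies precisely the details the paper leaves implicit. No gaps.
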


\begin{prop}\label{closure2}	
    $\bigcup_{v_i \in V} \overline{Vor(\gamma_i)} = \Sigma$ and		
    \begin{equation}
        \overline{Vor(\gamma_i)}=\{\,p \in \Sigma \mid
        r_i \sinh d(v_i,\gamma_i) \le r_j \sinh d(v_j,\gamma_j),
        \forall j \ne i\,\}
    \end{equation}
\end{prop}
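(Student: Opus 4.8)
The plan is to transcribe the proof of Proposition \ref{closure}, replacing the point comparison $d_f(\cdot,v_i)^2-w_i$ by the boundary comparison $r_i\sinh d(\cdot,\gamma_i)$ and the exponential map based at a vertex by the normal exponential map based at the boundary geodesic $\gamma_i$. First I would introduce the auxiliary open set
\[
    Vor'(\gamma_i)=\bigl\{\,p\in\Sigma \mid r_i\sinh d(p,\gamma_i)<r_j\sinh d(p,\gamma_j),\ \forall j\neq i\,\bigr\},
\]
obtained from $Vor(\gamma_i)$ by dropping the requirement that the minimizing geodesic to $\gamma_i$ be unique. Then $Vor(\gamma_i)\subseteq Vor'(\gamma_i)$, and the two sets differ only by points admitting several non-isotopic minimizing geodesics to $\gamma_i$. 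The closed set on the right-hand side of the claimed formula contains $Vor'(\gamma_i)$, and by Lemma \ref{gbh_sinh} each equality locus $r_i\sinh d(p,\gamma_i)=r_j\sinh d(p,\gamma_j)$ is a geodesic, hence nowhere dense with its strict $\gamma_i$-side accessible arbitrarily nearby; a routine density argument (using that the cell is nonempty by Proposition \ref{gbh_subset}) then identifies $\overline{Vor'(\gamma_i)}$ with this closed non-strict set.

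Next I would show $\overline{Vor(\gamma_i)}=\overline{Vor'(\gamma_i)}$ by checking that the difference $Vor'(\gamma_i)\setminus Vor(\gamma_i)$ has empty interior. Every point of this difference is joined to $\gamma_i$ by at least two non-isotopic geodesics realizing $d(\cdot,\gamma_i)$, so it lies in the cut locus of $\gamma_i$. Since $\Sigma$ is a genuine smooth hyperbolic surface carrying no cone points, $d(\cdot,\gamma_i)$ is smooth off this cut locus (via Fermi coordinates along $\gamma_i$), and as in Proposition \ref{closure} an application of Sard's theorem to the normal exponential map $E\colon\gamma_i\times[0,\infty)\to\Sigma$, $E(x,t)=\exp_x\bigl(t\,\nu(x)\bigr)$ with $\nu$ the inward unit normal, shows the cut locus has measure zero, hence empty interior. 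Consequently $Vor'(\gamma_i)\setminus Vor(\gamma_i)$ is nowhere dense, so $Vor(\gamma_i)$ is dense in the open set $Vor'(\gamma_i)$ and the two closures agree, yielding the claimed description of $\overline{Vor(\gamma_i)}$.

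For the covering statement I would use the finiteness of the boundary components: given $p\in\Sigma$, the finitely many numbers $r_i\sinh d(p,\gamma_i)$, $i=1,\dots,n$, attain their minimum at some index $i$, and then $r_i\sinh d(p,\gamma_i)\le r_j\sinh d(p,\gamma_j)$ for every $j$, so $p$ lies in $\overline{Vor(\gamma_i)}$ by the formula just established; hence $\bigcup_i\overline{Vor(\gamma_i)}=\Sigma$.

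The step I expect to be the main obstacle is establishing that the cut locus of the geodesic boundary $\gamma_i$ has empty interior. In contrast with the vertex case of Proposition \ref{closure}, here the relevant map is the normal exponential map of a one-dimensional submanifold rather than the exponential map of a point, so I would be careful to verify that $d(\cdot,\gamma_i)$ is genuinely smooth on the complement of the cut locus and that the presence of two minimizing geodesics forces either a conjugate point or an equidistant cut point, so that Sard's theorem covers both failure modes. The density argument of the first paragraph and the finite-minimum argument of the third are routine.
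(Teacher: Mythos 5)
Your proposal is correct and is essentially the paper's own argument: the paper gives no separate proof of Proposition \ref{closure2}, stating instead that one should copy the proof of Theorem \ref{vor_f} (in particular Proposition \ref{closure}) with the vertex replaced by the boundary geodesic, which is exactly your transcription --- auxiliary set $Vor'(\gamma_i)$, Sard's theorem showing the cut locus of $\gamma_i$ has empty interior so the closures agree, and the finite-minimum argument for the covering. Your extra care about the normal exponential map and the two failure modes (conjugate points versus equidistant cut points) only tightens a step the paper treats informally.
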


\begin{prop}\label{neighbor2}		
    For any $\forall p \in \overline{Vor(\gamma_i)}$, let $\gamma \colon I \to S$ be the shortest geodesic line connecting $p$ and $\gamma_i$, and $\gamma(0) \in \gamma_i,\gamma(1)=p$, then there exists the neighborhood $U$ of $\gamma$ (in the subspace topological sense of $\Sigma \subset S$) isometrically embedding in $\mathbb{H}^2$.
\end{prop}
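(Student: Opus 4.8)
The plan is to run the proof of Proposition~\ref{neighbor} almost verbatim, with the vertex $v_i$ replaced by the boundary geodesic $\gamma_i$ and the ``folding at a cone point'' obstruction replaced by a ``touching the boundary'' obstruction. Since $\Sigma$ is a smooth hyperbolic surface away from $\partial\Sigma$, the shortest connector $\gamma$ realizing $d(p,\gamma_i)$ is a genuine $\mathbb{H}^2$-geodesic meeting $\gamma_i$ orthogonally at $\gamma(0)$, and there is no interior folding to worry about because $\interior\Sigma$ has no cone points. The entire content of the proposition is thus the claim that $\gamma((0,1))\subset\interior\Sigma$, i.e. that $\gamma$ meets $\partial\Sigma$ only at its foot $\gamma(0)\in\gamma_i$. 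Granting this, a sufficiently thin tubular neighborhood of $\gamma$ in $\Sigma$ develops isometrically into $\mathbb{H}^2$, with the arc $\gamma_i\cap U$ sent to a geodesic segment; and exactly as in Proposition~\ref{neighbor} any self-overlap of the developing immersion is discarded by shrinking the neighborhood, producing the embedding $U\hookrightarrow\mathbb{H}^2$. First I would also clear the endpoint using $p\in\overline{Vor(\gamma_i)}$ and Proposition~\ref{closure2}: were $p\in\gamma_j$ for some $j\ne i$, then $d(p,\gamma_j)=0$ and the closure inequality $r_i\sinh d(p,\gamma_i)\le r_j\sinh d(p,\gamma_j)=0$ would give $d(p,\gamma_i)=0$, i.e. $p\in\gamma_i\cap\gamma_j=\varnothing$, which is impossible; so $p$ is interior (or $p\in\gamma_i$ with $\gamma$ trivial).

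The decisive step, and the one departing most from the Euclidean and hyperbolic polyhedral cases, is ruling out that $\gamma((0,1))$ touches a boundary component. Here the weighted-distance computation of Proposition~\ref{neighbor} and of its hyperbolic polyhedral analogue is \emph{not} available: the weight domain is the unrestricted $\mathbb{R}^V_{>0}$, so even knowing $d(p,\gamma_j)<d(p,\gamma_i)$ one cannot compare $r_i\sinh d(p,\gamma_i)$ with $r_j\sinh d(p,\gamma_j)$. Instead I would argue by convexity. Each $\gamma_k$ bounds $\Sigma$ on one side, so locally $\Sigma$ is a hyperbolic half-plane, which is geodesically convex; globally the universal cover of $\Sigma$ unfolds onto the intersection of the half-planes cut out by the disjoint lifts of the $\gamma_k$, hence is a convex subset of $\mathbb{H}^2$. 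Lifting $\gamma$ to the perpendicular from a lift $\tilde p$ to the nearest lift of $\gamma_i$, convexity forces this segment to remain in the interior except at its foot, so $\gamma$ cannot meet any $\gamma_k$ along $(0,1)$. Equivalently, a local contact would develop into two $\mathbb{H}^2$-geodesics tangent at the contact point; they cannot cross, since $\gamma\subset\Sigma$ lies on one side of $\gamma_k$, and tangent geodesics in $\mathbb{H}^2$ coincide, so $\gamma$ would have to run along $\gamma_k$ and then leave it at a corner, contradicting that $\gamma$ is geodesic.

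I expect this convexity step to be the main obstacle, precisely because it cannot be reduced to the weighted-distance inequalities that carried the earlier sections; it is a genuinely geometric input, and it is the reason the boundary case tolerates arbitrary positive weights. Once interior boundary contact is excluded, the developing map and the removal of overlaps are a routine repetition of Proposition~\ref{neighbor}, and the endpoint bookkeeping above completes the argument.
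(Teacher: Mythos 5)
Your proposal is correct, and it isolates exactly the step the paper isolates: the whole content of the proposition is that $\gamma((0,1))$ cannot touch $\partial\Sigma$, and—as you rightly stress—the weighted-distance computation of Proposition~\ref{neighbor} is unavailable here because $\mathbf{r}$ ranges over all of $\mathbb{R}^V_{>0}$. Where you differ from the paper is in how this key claim is justified. The paper disposes of it in one sentence, asserting that contact with another $\gamma_j$ ``is impossible \dots\ on the surface with constantly negative Gauss curvature by the variation,'' i.e.\ it appeals to the variational principle for geodesics (the same appeal it makes for Lemma~\ref{gbh_dual_isotopy}) and supplies no details. You instead give a global covering-space argument: the universal cover of $\Sigma$ develops onto a convex region of $\mathbb{H}^2$ cut out by the disjoint complete lifts of the $\gamma_k$, the lift of $\gamma$ is the perpendicular from the interior point $\tilde p$ to the nearest lift of $\gamma_i$, and convexity keeps the open segment in the interior; the endpoint case $p\in\gamma_j$ is cleared with Proposition~\ref{closure2}. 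This buys genuine rigor that the paper's remark lacks: a locally shortest path in a manifold with boundary is a priori only $C^1$ at boundary contact points, so the purely local/variational route (including your ``equivalently, tangency'' aside, which already presupposes $\gamma$ is a smooth geodesic at the contact point) needs that regularity theory, or a local half-plane development, to be complete, whereas the convexity argument sidesteps the issue and also delivers orthogonality at the foot for free. The cost is invoking the standard fact (available in the cited book of Buser) that the universal cover of a hyperbolic surface with geodesic boundary is convex in $\mathbb{H}^2$; the paper's intended proof is shorter and local, but as written it is an assertion rather than a proof, while yours is a complete argument.
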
	 

When proving this proposition, it is not necessary to exclude the case when $\gamma$ passes through other $\gamma_j$, because it is impossible for this case to happen on the surface with constantly negative Gauss curvature by the variation. The rest of the proof is trivial.

\begin{prop}	
    $Vor(\gamma_i) \cup D_i$ is simply connected.
\end{prop}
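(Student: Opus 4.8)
The plan is to mirror the proof of Proposition~\ref{simply connected}, with the boundary geodesic $\gamma_i$ playing the role of the vertex $v_i$ and isotopy fixing $\gamma_i$ replacing isotopy fixing $V$, the essential new feature being that $\Sigma$ carries a \emph{smooth} metric of constant curvature $-1$ with no interior cone points. First I would record connectedness: by Proposition~\ref{neighbor2} each point of $Vor(\gamma_i)$ is joined to $\gamma_i$ by its unique shortest geodesic, and $\gamma_i\subset Vor(\gamma_i)$ by Proposition~\ref{gbh_subset}, so $Vor(\gamma_i)$ is connected, and adjoining the disk $D_i$ along $\gamma_i$ keeps it so. Because $Vor(\gamma_i)\cup D_i$ is open in $S$ (Proposition~\ref{open2}) and contains $\gamma_i=\partial D_i$ in its interior, a collar of $\gamma_i$ lets me homotope any loop off $D_i$; it therefore suffices to show that every simple closed curve in $Vor(\gamma_i)$ is contractible in $Vor(\gamma_i)\cup D_i$. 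A curve freely homotopic into $\gamma_i$ bounds an annulus with $\gamma_i$ and hence a disk after gluing $D_i$, so I only need to exclude an essential, non-peripheral simple loop $\gamma$.

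For such a $\gamma$ I would let $f=d(\cdot,\gamma_i)$ attain its maximum along $\gamma$ at an interior point $\gamma(t_0)$ and split $\gamma$ there into two arcs. Straightening each arc to the geodesic in its isotopy class and appending the shortest geodesic to $\gamma_i$ should yield two geodesic segments $\mu_1,\mu_2$ from $\gamma(t_0)$ to $\gamma_i$, both meeting $\gamma_i$ perpendicularly and both of length $f(\gamma(t_0))=d(\gamma(t_0),\gamma_i)$, i.e.\ two \emph{shortest} geodesics from $\gamma(t_0)$ to $\gamma_i$. Since the definition of $Vor(\gamma_i)$ requires this shortest geodesic to be unique, the proof closes once $\mu_1$ and $\mu_2$ are shown to be distinct, equivalently not isotopic. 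In Proposition~\ref{simply connected} this was forced by a cone point inside the disk bounded by the two geodesics; here, with the smooth curvature $-1$ metric, Gauss--Bonnet does it directly: an embedded disk $U$ bounded by $\mu_1$ and $\mu_2$ (a geodesic bigon, or a region whose base is a sub-arc of $\gamma_i$ with two right angles at the feet) would satisfy $-\mathrm{Area}(U)+\sum(\text{exterior angles})=2\pi$, giving $\mathrm{Area}(U)<0$ because every interior angle is positive. This is absurd, so no such $U$ exists, $\mu_1$ and $\mu_2$ are not isotopic, and uniqueness is violated. Lemma~\ref{gbh_sinh} is what guarantees that the cell walls separating $Vor(\gamma_i)$ from its neighbours are genuine geodesics, so that the half-space and angle bookkeeping above is legitimate.

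The main obstacle I expect is not the Gauss--Bonnet estimate, which is clean, but the honest construction of the two competing shortest geodesics at the maximum point: I must argue that an essential, non-peripheral simple loop really produces two non-isotopic shortest connections from $\gamma(t_0)$ to the \emph{circle} $\gamma_i$ (the replacement, for a boundary component rather than a point, of splitting a based loop at its farthest point), and that gluing $D_i$ introduces no spurious classes. A tidier but heavier alternative would be to flow each point of $Vor(\gamma_i)$ along its shortest geodesic to $\gamma_i$, exhibiting a deformation retraction of $Vor(\gamma_i)$ onto $\gamma_i\simeq S^1$ and hence of $Vor(\gamma_i)\cup D_i$ onto the disk $D_i$; but this requires the flowed geodesics to remain inside the cell, that is the geodesic convexity of $Vor(\gamma_i)$, which in this paper is only established after the present proposition. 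I would therefore keep the maximum-point plus Gauss--Bonnet contradiction as the primary route.
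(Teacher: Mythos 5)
Your argument is correct in substance, but it is worth knowing that the paper's entire proof of this proposition is a single sentence, and that sentence says the opposite of what you do: the claim ``can be proved by the uniqueness of geodesic, \emph{without} using Gauss--Bonnet theorem,'' whereas Gauss--Bonnet is precisely your main tool. The two routes are equivalent in substance: once the metric on $\Sigma$ is smooth of constant curvature $-1$ (the new feature you correctly isolate), the key fact --- that two distinct geodesic arcs from a point, each meeting $\gamma_i$ orthogonally, can never be homotopic in $\Sigma$ --- is just the uniqueness of the geodesic representative in a homotopy class on a negatively curved surface, and your bigon/right-angled-triangle area count $\mathrm{Area}(U)=-\sum_k\theta_k<0$ is the classical proof of that uniqueness, so you are re-deriving what the paper simply quotes. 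The cleanest way to realize the paper's hint, and to avoid the step you yourself flag as the weak point, is to pass to the universal cover: by Lemma \ref{gbh_sinh}, the lift of $Vor(\gamma_i)$ attached to a fixed lift $\tilde\gamma_i$ is an intersection of open half-planes, hence convex, and it is invariant exactly under the cyclic deck group generated by the hyperbolic element of $\gamma_i$; therefore $Vor(\gamma_i)$ is a convex set modulo $\mathbb{Z}$, i.e.\ an annulus whose fundamental group is generated by $[\gamma_i]$, and capping with $D_i$ kills it. This bypasses both genuine soft spots in your write-up: the construction of two distinct \emph{shortest} geodesics at the farthest point of an essential loop (an assertion the paper itself makes without justification in its model argument, Proposition \ref{simply connected}, where $\mathrm{length}(\gamma_1)=\mathrm{length}(\gamma_2)$ is declared rather than proved), and the peripheral case, where the free homotopy from $\gamma$ to $\gamma_i$ must be shown to take place inside $Vor(\gamma_i)\cup D_i$ and not merely inside $S$. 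What your route buys is a self-contained contradiction argument exactly parallel to Proposition \ref{simply connected}, which makes transparent why the cone-point detection needed there is replaced by an outright impossibility here; what the paper's route buys is brevity, plus freedom from the maximum-point construction altogether.
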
	

We can prove it by the uniqueness of geodesic, without using Gauss-Bonnet theorem.

After these modifications, we have proved theorem \ref{gbh_vor}.

For the Voronoi-Delaunay duality, we need the following lemma.
\begin{lem}\label{gbh_dual_isotopy}
    Let $p$ belong to a one-dimensional cell of a weighted Voronoi decomposition of a hyperbolic surface with geodesic boundaries. This cell is the boundary of $Vor(\gamma_i),Vor(\gamma_j)$ (here, the neighborhood of $p$ is allowed to be the same 2-cell, or $\gamma_i=\gamma_j$). If one connect the geodesic connecting $p,\gamma_i$ and geodesic connecting $p,\gamma_j$ at $p$ to obtain the path $\gamma$, then there exists a unique geodesic connecting $\gamma_i,\gamma_j$ isotopic to $\gamma$ in $\Sigma$. 
\end{lem}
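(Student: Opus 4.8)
The plan is to copy the straightening argument of Lemma \ref{dual_isotopy} (and of its hyperbolic-polyhedral analog Lemma \ref{dual_isotopy2}) almost verbatim, performing the three substitutions already used throughout this section: each vertex $v_i$ becomes the boundary geodesic $\gamma_i$, each shortest geodesic from a point to a vertex becomes the perpendicular distance-realizing segment from the point to $\gamma_i$, and ``isotopy fixing $V$'' becomes ordinary isotopy in $\Sigma$. The object being produced is the common perpendicular of $\gamma_i$ and $\gamma_j$: once the bent path $\gamma$ is straightened into a single geodesic through $\mu(t)$, each of its two halves lies along a shortest segment to a boundary component and therefore meets that component orthogonally, so the straightened geodesic is automatically perpendicular to both $\gamma_i$ and $\gamma_j$, which is exactly the geodesic we want. (The same argument covers the degenerate case $\gamma_i=\gamma_j$, where the output is an orthogeodesic issuing from and returning to $\gamma_i$.)

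Concretely, let $\mu\colon I \to \Sigma$ parametrize the 1-cell containing $p$, and for each $t$ let $\gamma_t$ be the path obtained by gluing at $\mu(t)$ the perpendicular segment from $\mu(t)$ to $\gamma_i$ and the perpendicular segment from $\mu(t)$ to $\gamma_j$; by Proposition \ref{neighbor2} each such path has a neighborhood isometrically embedded in $\mathbb{H}^2$. If some $\gamma_t$ has interior angle $\pi$ at $\mu(t)$, that $\gamma_t$ is already the desired geodesic. Otherwise, assuming without loss of generality that the angle on the $\mu(1)$ side is less than $\pi$, I would extend $\mu$ past $t=1$ along its geodesic direction and straighten $\gamma_t$ in the local $\mathbb{H}^2$ model; there is then some parameter $s>1$ for which $\gamma_s$ becomes a single geodesic, i.e. the common perpendicular of the relevant copies of $\gamma_i$ and $\gamma_j$.

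The heart of the matter, and the step I expect to be the main obstacle, is showing that this straightening is realized inside $\Sigma$ rather than only in the local developing picture. I would argue by contradiction as in Lemma \ref{dual_isotopy}: if $s'\le s$ is the supremum of parameters for which the isometric embedding persists on $\Sigma$, then at $s'$ one of the two segments, say the one from $\gamma_i$ to $\mu(s')$, runs into another boundary component $\gamma_k$. Since $s'\le s$, the angle $\bigl<\gamma_i,\mu(s'),\mu(1)\bigr>$ is at least $\frac{\pi}{2}$, and the bisector geometry of Lemma \ref{gbh_sinh} together with Proposition \ref{gbh_subset} forces $\gamma_k$, $\mu(s')$ and $\mu(1)$ to lie on the same side of the Voronoi edge separating $Vor(\gamma_i)$ from $Vor(\gamma_k)$; this puts a whole neighborhood of $\mu(1)$ outside $Vor(\gamma_i)$, contradicting the fact that $\mu(1)$ is a $0$-cell of $\partial Vor(\gamma_i)$ every neighborhood of which meets $Vor(\gamma_i)$. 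The one place where the boundary case genuinely differs from the cone-point case is the nature of this obstruction: the blocking locus is a geodesic boundary $\gamma_k$ that the segment becomes tangent to, not an isolated cone point. I would therefore first record that in negative curvature the perpendicular foot and the segment depend smoothly on $t$ until such a tangency occurs, which is what makes $s'$ well defined and lets the rest of the argument run unchanged.

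For uniqueness I would reuse the Gauss--Bonnet argument of Proposition \ref{simply connected}: two distinct geodesics in the same isotopy class joining $\gamma_i$ to $\gamma_j$ would, together with sub-arcs of $\gamma_i$ and $\gamma_j$, bound an embedded disk all of whose sides are geodesic and whose four corner angles are right angles, giving $-\mathrm{Area}+4\cdot\frac{\pi}{2}=2\pi$ and hence zero area, a contradiction. Equivalently, passing to the developing map, the isotopy class of $\gamma$ selects a definite pair of disjoint lifts of $\gamma_i,\gamma_j$ in $\mathbb{H}^2$, and two disjoint geodesics in $\mathbb{H}^2$ admit a unique common perpendicular.
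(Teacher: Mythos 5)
Your route is genuinely different from the paper's, and far heavier. The paper's entire proof of Lemma \ref{gbh_dual_isotopy} is two sentences: since $\Sigma$ has no cone points, there is nothing further to discuss, and the lemma follows from the variational principle for geodesics. The whole extension-and-contradiction apparatus of Lemma \ref{dual_isotopy}, which you transplant, exists solely to handle cone points, the only obstruction to straightening a path within its isotopy class on a polyhedral surface. The case your third paragraph is organized around --- a distance-realizing segment ``running into'' or ``becoming tangent to'' another boundary component $\gamma_k$ --- cannot occur at all: a geodesic tangent to the geodesic $\gamma_k$ coincides with $\gamma_k$, by uniqueness of geodesics with given initial point and velocity, and your segment cannot lie in any boundary component, since it meets $\gamma_i$ orthogonally and $\gamma_i\cap\gamma_k=\varnothing$ for $k\ne i$. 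So the supremum $s'$ never materializes and the straightening runs unobstructed; this is exactly what the paper means by ``there is no need for further discussion.''

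Moreover, the contradiction you sketch for that (vacuous) case is not actually established, so as written your main argument has a gap. The claim that the angle bound plus ``the bisector geometry of Lemma \ref{gbh_sinh} together with Proposition \ref{gbh_subset}'' forces $\mu(1)$ onto the $\gamma_k$-side of the relevant bisector is borrowed from the Euclidean computation, which uses crucially that the Euclidean bisector is orthogonal to the straight line through $v_i$, $v_k$, $\mu(s')$. Here the bisector of Lemma \ref{gbh_sinh} is orthogonal to the common perpendicular of $\gamma_i$ and $\gamma_k$, which is in general \emph{not} your segment from $\mu(s')$ to its foot on $\gamma_i$, so the half-plane nesting does not transfer verbatim and you offer no substitute argument. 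The clean proof is the one you relegate to an ``equivalently'' for uniqueness: pass to the universal cover, a convex region of $\mathbb{H}^2$ bounded by pairwise disjoint geodesic lifts of the boundary components; the isotopy class of $\gamma$ determines two disjoint lifts $\tilde\gamma_i$, $\tilde\gamma_j$; these admit a unique common perpendicular $P$; and $P$ stays inside the region (hence descends to $\Sigma$) because any other boundary lift $\tilde\gamma_k$ meeting $P$ could cross it only once, which would place the two endpoints of $P$ --- which lie on $\tilde\gamma_i$ and $\tilde\gamma_j$, inside the region --- on opposite sides of $\tilde\gamma_k$, a contradiction. This yields existence, uniqueness, and orthogonality at both ends in one stroke, and it is the precise content of the paper's appeal to the variational principle.
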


Since there is no cone point in $\Sigma$, there is no need for further discussion. We know that the lemma holds by the variational principle of the geodesic.

Let $(S,V)$ be the marked surface associated to $\Sigma$ with the weight $\mathbf{r} \in \mathbb{R}^V_{>0}$, then their exists a cell decomposition dual to its weighted Voronoi decomposition, called the \emph{weighted Delaunay tesselation}, which is unique limited on $\Sigma$ up to isometry isotopic to identity. Each cell is a hyperbolic right-angled polygon with even edges, of which the alternate edges are contained in $\partial\Sigma$. By connecting the geodesics between boundaries of a polygon with no less than eight edges, we get a cell decomposition with hyperbolic right-angled hexagons, or \emph{truncated triangulation} precisely. We also call the different ways for choosing diagonal geodesics of polygon \emph{switches}.

Similar to the method of proving theorem \ref{local_whole_f} by Lemma \ref{dual_isotopy}, the following theorem can be proved by Lemma \ref{gbh_dual_isotopy}.

\begin{thm}\label{gbh_unique}
    Given a hyperbolic surface with geodesic boundaries $(\Sigma,d)$ and a weight function $\mathbf{r} \in \mathbb{R}^V_{>0}$, there exists a weighted Delaunay triangulation, which is unique up to finite edge switches.
\end{thm}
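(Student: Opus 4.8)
The plan is to mirror the proof of Theorem~\ref{local_whole_f}, with the weighted Voronoi decomposition of Theorem~\ref{gbh_vor} in the role of the flat one and Lemma~\ref{gbh_dual_isotopy} in the role of Lemma~\ref{dual_isotopy}. The two ingredients I would assemble are the existence of a truncated triangulation obtained by dualizing the Voronoi decomposition, and the fact that this dual is canonically determined, so that all the remaining freedom lies in finitely many diagonal choices.

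First, for existence, I would build the dual tessellation explicitly. By Theorem~\ref{gbh_vor} the weighted Voronoi decomposition is a finite CW decomposition of $\Sigma$ whose $2$-cells are the $Vor(\gamma_i)$. Each $1$-cell separates two such cells $Vor(\gamma_i)$ and $Vor(\gamma_j)$; by Lemma~\ref{gbh_sinh} it runs along the bisector locus $r_i\sinh d(\cdotp,\gamma_i)=r_j\sinh d(\cdotp,\gamma_j)$, which is orthogonal to the common perpendicular of $\gamma_i$ and $\gamma_j$. Dualizing, Lemma~\ref{gbh_dual_isotopy} supplies a unique geodesic joining $\gamma_i$ and $\gamma_j$ isotopic to the path glued through the wall, and this geodesic is the common perpendicular, so it meets both boundary components at a right angle. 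Running this over all $1$-cells and gluing the linked cells around the $0$-cells yields a cell decomposition of $\Sigma$ whose faces are hyperbolic right-angled polygons with an even number of edges, the alternate edges lying on $\partial\Sigma$. Subdividing every face with at least eight edges by inserting geodesic diagonals between its boundary arcs refines this into a decomposition by right-angled hexagons, that is, a truncated triangulation, which is the asserted weighted Delaunay triangulation.

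For uniqueness up to finite switches, I would argue that the tessellation itself is forced. Since it is dual to the weighted Voronoi decomposition, and the latter is unique on $\Sigma$ by Theorem~\ref{gbh_vor}, the tessellation is unique up to isotopy. Any two weighted Delaunay triangulations therefore refine the same tessellation and can differ only in the choice of geodesic diagonals inside each polygonal face. Because $\Sigma$ is compact, the tessellation has finitely many faces, each a polygon with finitely many edges, so there are only finitely many admissible diagonalizations per face and hence finitely many triangulations in total; any two are joined by a finite sequence of switches, just as the cocircular degeneracy $h_{ij,k}+h_{ij,l}=0$ of Definition~\ref{loc_del} is resolved by a single diagonal switch in the flat case.

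The step I expect to be the main obstacle is verifying the right-angle structure of the dual edges, namely that the geodesic furnished by Lemma~\ref{gbh_dual_isotopy} is genuinely the common perpendicular of the two boundary components and not merely isotopic to some path between them; this is where Lemma~\ref{gbh_sinh} must be invoked to identify the Voronoi wall with the orthogonal bisector and to transport the orthogonality through the isotopy. A secondary point is the degenerate case in which two boundary arcs of a single face coincide, the analogue of a self-glued edge in the flat discussion, which forces an isosceles right-angled configuration and is disposed of by the same symmetry argument that renders a self-glued edge automatically weighted Delaunay.
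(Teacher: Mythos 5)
Your proposal is correct and follows essentially the same route as the paper: dualize the weighted Voronoi decomposition of Theorem~\ref{gbh_vor} via Lemma~\ref{gbh_dual_isotopy} (in the role of Lemma~\ref{dual_isotopy} from the flat case) to obtain the tessellation by right-angled polygons with alternate edges on $\partial\Sigma$, then subdivide faces with at least eight edges into right-angled hexagons, with uniqueness up to finitely many diagonal switches inherited from the uniqueness of the Voronoi decomposition. Your additional attention to the orthogonality of the dual geodesics via Lemma~\ref{gbh_sinh} and to the self-glued degenerate case only makes explicit what the paper leaves implicit.
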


\section{Finiteness}

\begin{lem}\label{finite_edges}
    
    Given any two points $p,q$ on a surface $(S,V,d_f)$ or $(S,V,d_h)$ or $(\Sigma,d)$ ($p$ and $q$ can coincide), then there is a finite number of geodesics with bounded lengths connecting $p$ and $q$ without passing other vertices. In particular, for $(\Sigma,d)$, there is a finite number of geodesics with bounded lengths that are connected and orthogonal to two given boundaries.    
    
\end{lem}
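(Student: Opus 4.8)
The plan is to fix a geodesic triangulation and to argue that each admissible geodesic is pinned down by the finite combinatorial record of which edges it crosses, and then to bound the number of such crossings by a function of the length bound. Concretely, fix once and for all a geodesic triangulation $\mathcal{T}=(V,E,F)$ of the surface (which exists by Theorem~\ref{triangle} in the flat case, by Theorem~\ref{hyp_trig} in the hyperbolic polyhedral case, and is the truncated triangulation of Theorem~\ref{gbh_unique} in the case of $\Sigma$), and let $L$ be the length bound. Given a geodesic $\gamma$ from $p$ to $q$ of length $\le L$ whose interior avoids $V$, I record its \emph{crossing sequence}: the ordered list $e_{i_1},\dots,e_{i_m}$ of edges met transversally, in order along $\gamma$. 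A geodesic meets the $1$-skeleton transversally in finitely many points, apart from the degenerate subcase in which it runs along edges, which is handled identically by the developing argument below. Since $\gamma$ is straight inside each triangle and a triangle is convex, consecutive edges $e_{i_k},e_{i_{k+1}}$ bound a common triangle. The lemma follows from two claims: (i) the crossing sequence determines $\gamma$ uniquely, and (ii) $m\le F(L)$ for a constant depending only on the metric and $L$.

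For claim (i) I would use the developing map. Develop the ordered chain of triangles $T_0,\dots,T_m$ traversed by $\gamma$ isometrically into the model space $X$ ($X=\mathbb{E}^2$ for $d_f$, and $X=\mathbb{H}^2$ for $d_h$ and for $\Sigma$); this is possible because the chain, a finite union of triangles glued along edges and avoiding the cone points, is simply connected and carries a flat (resp.\ hyperbolic) structure. Under this map $\gamma$ develops to a geodesic of $X$ joining the determined image $\tilde p\in T_0$ to the determined image $\tilde q\in T_m$ (when $p$ or $q$ lies in $V$ we take $T_0$, resp.\ $T_m$, incident to it). As the geodesic between two given points of $\mathbb{E}^2$ or $\mathbb{H}^2$ is unique, $\gamma$ is recovered from the developed chain, which in turn is determined by the crossing sequence; hence at most one geodesic realises each crossing sequence. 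For $\Sigma$ the same argument applies with ``two points'' replaced by ``two boundary geodesics'' and uniqueness of geodesics replaced by uniqueness of the common perpendicular of two disjoint geodesics of $\mathbb{H}^2$.

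For claim (ii) I would use a thick--thin decomposition. Choose $\rho>0$ so small that the balls $B(v_i,\rho)$ are disjoint embedded cones. On the thick part $S\setminus\bigcup_i B(v_i,\rho/2)$ there is $w>0$ such that every straight segment crossing a triangle while staying at distance $\ge\rho/2$ from all vertices has length $\ge w$ (a segment between two edges sharing a vertex but avoiding its $\tfrac{\rho}{2}$-ball cannot be arbitrarily short), so $\gamma$ makes at most $L/w$ crossings in the thick part. Inside a cone neighbourhood the essential point is that a vertex-avoiding geodesic sweeps a total angle strictly less than $\pi$ about the apex: developing the cone to $X$ turns the geodesic into a straight segment missing the apex, whose argument ranges over an interval of length $<\pi$. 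Since one turn around a cone of angle $\varphi$ corresponds to angular width $\varphi\ge\varphi_{\min}>0$, during a single excursion into a cone neighbourhood $\gamma$ wraps at most $\lceil \pi/\varphi_{\min}\rceil$ times and crosses at most $C$ edges, with $C$ depending only on $\varphi_{\min}$ and the maximal vertex valence. The number of excursions is at most one more than the number of thick-part segments, so $m\le C(L/w+1)+L/w=:F(L)$. For $\Sigma$ there are no cone points and the thick-part bound alone suffices.

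Finally, the number of crossing sequences of length at most $F(L)$ is finite, since each is a walk of bounded length in the finite $1$-skeleton with consecutive edges sharing a triangle; together with claim (i) this caps the number of admissible geodesics. The step I expect to be the main obstacle is claim (ii) near the cone points, precisely because the holonomy around a small-angle cone point may be an irrational rotation, so the naive ``discrete orbit of $q$ in the developed picture'' argument fails; the resolution is the elementary but crucial observation that a geodesic missing the apex subtends angle less than $\pi$ there, which caps the edge crossings per excursion independently of the length.
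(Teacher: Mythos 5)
Your proposal is correct in its essential ideas, but it takes a genuinely different route from the paper. The paper gives no combinatorial argument at all: it invokes the Arzel\`a--Ascoli compactness argument of Indermitte et al.\ \cite{indermitte2001voronoi} (infinitely many geodesics of length at most $L$ between $p$ and $q$, parametrized proportionally to arc length, would be equicontinuous and hence have a uniformly convergent subsequence, which is incompatible with their being pairwise distinct geodesics), and then simply asserts that this argument generalizes verbatim to $(S,V,d_h)$ and $(\Sigma,d)$. You instead fix a geodesic triangulation (Theorem~\ref{triangle}, Theorem~\ref{hyp_trig}, resp.\ a right-angled hexagon decomposition of $\Sigma$), encode each admissible geodesic by its crossing sequence, recover the geodesic uniquely from that sequence via the developing map (using uniqueness of geodesic segments, resp.\ of common perpendiculars of disjoint geodesics, in $\mathbb{E}^2$ and $\mathbb{H}^2$), and bound the number of sequences by two quantitative facts: chords staying away from the vertices have length bounded below by compactness of the abstract model triangle, and a vertex-avoiding geodesic subtends angle less than $\pi$ at a cone apex. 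What your route buys is that it is self-contained and effective: it yields an explicit bound $F(L)$ on the combinatorial complexity and hence an explicit finite bound on the number of geodesics, uniform across all three geometries; the paper's route buys brevity, but it is non-constructive and leaves the two generalizations entirely to the reader.

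One step of yours needs shoring up: the assertion that the number of excursions into cone neighbourhoods is at most one more than the number of thick-part segments. As stated this is unjustified, since a geodesic can leave a ball $B(v,\rho)$ and re-enter it, or enter a ball around a different vertex, without producing in between a complete chord avoiding all the $\rho/2$-balls. Both cases are repairable inside your own framework: a transition between balls around distinct vertices $v\ne v'$ costs length at least $\min_{v\ne v'}d(v,v')-2\rho$, so the number of such transitions is at most $L$ divided by that constant; and re-entry into the ball around the same $v$ cannot happen while the geodesic remains in the maximal embedded cone around $v$, because the distance to the apex is strictly convex along a cone geodesic (in the developed picture the geodesic is $r\sin(\theta-\theta_0)=c$, so $r$ decreases to a minimum and then increases), so after exiting $B(v,\rho)$ the geodesic must first leave the larger embedded cone ball before it can return, again costing a definite length. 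With this bookkeeping fixed, your bound $m\le F(L)$, and with it the finiteness, goes through; the same applies to the boundary-orthogonal case on $(\Sigma,d)$, where, as you note, there are no cone points and the thick-part bound alone suffices.
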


The proof of the lemma above used the Arzel{\`a}–Ascoli theorem in the case of piecewise flat surfaces \cite{indermitte2001voronoi}, and this method can be directly generalized to the other two kinds of surface.

\begin{thm}
    
    Given a piecewise flat surface $(S,V,d_f)$, we can find a finite number of triangulations $\mathcal{T}_1,\dots,\mathcal{T}_k$, such that for any weight function $\mathbf{r} \in R$, there exist a weighted Delaunay triangulation $\mathcal{T}_i$ among the finite collection.    

\end{thm}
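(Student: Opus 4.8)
The plan is to bound the length of every arc that can occur as an edge of a weighted Delaunay triangulation by a single constant depending only on $(S,V,d_f)$, and then to feed this bound into Lemma \ref{finite_edges}: once only finitely many geodesic arcs are available as edges, and since a triangulation of the fixed marked surface $(S,V)$ is recovered from its embedded geodesic $1$-skeleton (the $2$-cells being the complementary regions), only finitely many triangulations can occur, and every $\mathbf{r}\in R$ realizes one of them. The observation that makes the estimates uniform is this: writing $w_i=r_i^2$, the entire weighted Voronoi–Delaunay construction is invariant under the global shift $w_i\mapsto w_i+c$, so it depends only on the differences $w_i-w_j$; and the defining inequalities of $R$ say precisely that $w_i-w_j<d_f(v_i,v_j)^2\le D^2$, where $D:=\max_{p,q\in S}d_f(p,q)$ is the finite diameter. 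Thus, even though the individual weights are unbounded over $R$, their pairwise differences are bounded once and for all.

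To carry this out I set $\rho:=\max_{p\in S}\min_{v_i\in V}d_f(p,v_i)$, the covering radius, which is finite since $S$ is compact and $V$ is finite. Fix $\mathbf{r}\in R$, a weighted Delaunay triangulation, and a face $f_{ijk}$ with its circumcenter $O_{ijk}$ and power $w_{ijk}$, so that $d_f(O_{ijk},v_\alpha)^2=w_{ijk}+w_\alpha$ for $\alpha\in\{i,j,k\}$ and $d_f(O_{ijk},v_l)^2-w_{ijk}\ge w_l$ for every other $l$, as in the characterization recalled before Theorem \ref{local_whole_f}. Taking $v_m$ to be a vertex nearest $O_{ijk}$ gives $w_{ijk}\le d_f(O_{ijk},v_m)^2-w_m\le\rho^2-w_m$, whence
\begin{equation}
    d_f(O_{ijk},v_\alpha)^2=w_{ijk}+w_\alpha\le\rho^2+(w_\alpha-w_m)<\rho^2+D^2 .
\end{equation}
Developing the geodesic face $f_{ijk}$ together with the three shortest geodesics $O_{ijk}v_\alpha$ into $\mathbb{E}^2$ (Proposition \ref{neighbor}), the edge $e_{ij}$ is the straight segment joining the images of $v_i$ and $v_j$, so $\mathrm{length}(e_{ij})\le d_f(O_{ijk},v_i)+d_f(O_{ijk},v_j)<2\sqrt{\rho^2+D^2}=:L$. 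The same estimate covers any geodesic diagonal inserted inside a polygonal tessellation face, since all vertices of such a face lie within distance $\sqrt{\rho^2+D^2}$ of its common circumcenter. Hence every edge of every weighted Delaunay triangulation, for every $\mathbf{r}\in R$, has length at most $L$.

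By Lemma \ref{finite_edges}, for each ordered pair of vertices there are only finitely many geodesics of length at most $L$ joining them without passing through another vertex; as $V$ is finite, the set $\mathcal{E}$ of all such arcs is finite. Every weighted Delaunay edge lies in $\mathcal{E}$, and each triangulation is determined by its geodesic $1$-skeleton $\subset\mathcal{E}$, so there are at most $2^{|\mathcal{E}|}$ possibilities; the list $\mathcal{T}_1,\dots,\mathcal{T}_k$ of those that are genuine $\Delta$-decompositions is the desired finite family, into which every $\mathbf{r}\in R$ falls. I expect the main obstacle to be the uniformity of the length bound rather than the counting. One must resist bounding $\mathrm{length}(e_{ij})$ by the length of a dual path and then minimizing over its homotopy class, because a flat cone surface may carry cone angles below $2\pi$, where a geodesic need not minimize length in its class; the argument above sidesteps this by working inside the single Euclidean development of the Delaunay face, where $e_{ij}$ is literally a straight segment, and by extracting the uniform circumradius bound from the covering radius together with the bounded weight differences guaranteed by $R$.
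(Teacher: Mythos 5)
Your overall scheme coincides with the paper's: establish a uniform length bound valid for every edge of every weighted Delaunay triangulation as $\mathbf{r}$ ranges over $R$, then feed it into Lemma \ref{finite_edges} together with the finiteness of vertex pairs, so that all possible edges come from one finite set of arcs and only finitely many triangulations can occur. The paper obtains the bound $2D$ at once: by Lemma \ref{dual_isotopy}, each Delaunay edge is the geodesic isotopic (fixing $V$) to the folded path through a point $p$ of the dual $1$-cell, whose two arms realize $d_f(p,v_i)\le D$ and $d_f(p,v_j)\le D$; and the geodesic produced by that lemma is constructed as a straight segment inside a Euclidean development that also contains the folded path, so the comparison of lengths is the Euclidean triangle inequality in that development --- not a minimization over the isotopy class. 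Your stated reason for rejecting this route (cone angles below $2\pi$, so geodesics need not minimize in their class) is therefore aimed at a step the paper never takes.

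The genuine gap is in your replacement for that step. You assert that the face $f_{ijk}$ can be developed into $\mathbb{E}^2$ ``together with the three shortest geodesics $O_{ijk}v_\alpha$'' and that $e_{ij}$ is then the straight segment joining the images of $v_i$ and $v_j$, citing only Proposition \ref{neighbor}. That proposition embeds a neighborhood of a \emph{single} shortest geodesic; it gives no simultaneous development of the face and the three arms. The union of the face with the arms is not simply connected, and the entire difficulty is to show that each pocket bounded by two arms and an edge contains no cone point --- otherwise there is no common development and the inequality $\mathrm{length}(e_{ij})\le d_f(O_{ijk},v_i)+d_f(O_{ijk},v_j)$ does not follow; a priori a geodesic representative of the edge's isotopy class could be very long. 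Proving that those pockets are flat is precisely the content of Lemma \ref{dual_isotopy} (the extension argument plus Gauss--Bonnet), which your proposal never invokes; once you do invoke it, your argument collapses back onto the paper's. Separately, the covering-radius and weight-shift apparatus buys nothing: in the characterization recalled before Theorem \ref{local_whole_f}, the quantities $d_f(O_{ijk},v_\alpha)$ are distances between points of the compact surface, hence at most $D$ outright, so your bound $2\sqrt{\rho^2+D^2}$ is weaker than the trivial $2D$; the unboundedness of the individual weights over $R$ never threatens the estimate.
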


\begin{proof} 

    Since $(S,V,d_f)$ is compact, the diameter of $S$ denoted by $D=\sup\limits_{p,q \in S}d_f(p,q)$ is bounded. Given any weight $\mathbf{r} \in R_1$, by Lemma \ref{dual_isotopy}, any edge of the corresponding weighted Delaunay triangulation is isotopic to a folded line segment glued by two geodesics. These two geodesics respectively connect a point to two vertices, or to one vertex in different isotopy class. Thus, the edge of weighted Delaunay triangulation is no longer than the length of the folded line segment, so it is no longer than $2D$.
    
    According to Lemma \ref{finite_edges}, we know that the number of edges connecting the given pair of vertices with the length less than $2D$ is finite.
    Note that the number of ways for choosing vertex pairs is no more than $m\left|V\right|^2$, where $m$ represents the maximum number of the 1-cell of weighted Voronoi decomposition.     
    Therefore, the edges of weighted Delaunay triangulations are selected from a finite number of possible edges. As this number is only related to $(S,V,d_f)$, there is only a finite number of ways to select such edges, thus the number of triangulations is finite.

\end{proof}

    It should be pointed out that in the thesis \cite{gorlina2011weighted}, to demonstrate the termination of edge flipping, a similar method was used to prove the finiteness. Specifically, when the weights are fixed and Delaunay condition is not required, but edge lengths are bounded, then the number of triangulations is finite. This differs from the conditions in our theorem where weights are allowed to vary and weighted Delaunay condition should be required.

    We can prove the following two theorems with the same method.

\begin{thm}

    Given a piecewise hyperbolic surface $(S,V,d_h)$, we can find a finite number of triangulations $\mathcal{T}_1,\dots,\mathcal{T}_k$, such that for any weight function $\mathbf{r} \in R$, there exist a weighted Delaunay triangulation $\mathcal{T}_i$ among the finite collection.    

\end{thm}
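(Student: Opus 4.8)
The plan is to run the argument of the preceding Euclidean theorem essentially unchanged, replacing the flat metric $d_f$ by $d_h$, the decomposition $\{\,Vor_f(v_i)\,\}$ by $\{\,Vor_h(v_i)\,\}$, and the dual-isotopy Lemma \ref{dual_isotopy} by its hyperbolic counterpart Lemma \ref{dual_isotopy2}. First I would observe that $S$ is a closed surface and hence compact in the $d_h$ metric, so its diameter $D = \sup_{p,q \in S} d_h(p,q)$ is finite; this is the sole role played by compactness, exactly as in the flat case.

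Next, I would fix an arbitrary weight $\mathbf{r} \in R$ and consider the edges of the associated weighted Delaunay tessellation, i.e.\ the geodesics dual to the $1$-cells of the weighted Voronoi decomposition. By Lemma \ref{dual_isotopy2}, each such edge is the unique geodesic isotopic, fixing $V$, to a folded path $\gamma$ glued at a point $p$ of a Voronoi $1$-cell from the shortest geodesic joining $p$ to $v_i$ and the shortest geodesic joining $p$ to $v_j$. Each of these two pieces has length $d_h(p,v_i) \le D$, respectively $d_h(p,v_j) \le D$, so $\gamma$ has length at most $2D$; since the geodesic representative of an isotopy class does not exceed the length of any isotopic path, the Delaunay edge itself has length at most $2D$. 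Consequently every edge occurring in any weighted Delaunay triangulation, for every $\mathbf{r} \in R$, belongs to the single pool $\mathcal{E}$ of geodesic arcs on $S$ that join two (possibly coinciding) vertices, avoid all other vertices, and have length at most $2D$.

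I would then apply Lemma \ref{finite_edges}, which is stated to hold for $(S,V,d_h)$ as well: for each of the at most $|V|^2$ ordered vertex pairs only finitely many geodesics of length $\le 2D$ exist, so $\mathcal{E}$ is finite. A triangulation is determined by its edge set, a subset of $\mathcal{E}$, and a finite set has only finitely many subsets; because $\mathcal{E}$ depends on $(S,V,d_h)$ alone and not on the chosen weight, the family of triangulations realizable as $\mathbf{r}$ varies over $R$ is finite. Enumerating these as $\mathcal{T}_1,\dots,\mathcal{T}_k$ yields the theorem.

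The only thing that requires genuine checking, and hence the main obstacle, is that the two imported ingredients survive the passage to negative curvature. The length bound rests on Lemma \ref{dual_isotopy2}, which the paper has already established as the exact hyperbolic analogue of Lemma \ref{dual_isotopy}; the finiteness of $\mathcal{E}$ rests on Lemma \ref{finite_edges}, whose Arzel\`a--Ascoli proof is asserted to carry over verbatim. Once both are granted, the counting is purely combinatorial and introduces no new hyperbolic geometry, so I expect no serious difficulty beyond confirming these two transfers.
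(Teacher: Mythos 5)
Your proposal matches the paper's own treatment: the paper proves the flat case in detail and then states that the hyperbolic theorem follows ``with the same method,'' which is exactly what you carry out --- compactness gives the diameter bound, Lemma \ref{dual_isotopy2} bounds every Delaunay edge by $2D$, and Lemma \ref{finite_edges} (asserted to transfer via Arzel\`a--Ascoli) makes the pool of admissible edges finite, so only finitely many triangulations can occur. Your write-up is, if anything, slightly more explicit than the paper about why the geodesic representative is no longer than the folded path and about the final combinatorial count.
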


\begin{thm}\label{akiyoshi}
    
    Given a hyperbolic surface with geodesic boundaries $(\Sigma,d)$, we can find a finite number of truncated triangulations $\mathcal{T}_1,\dots,\mathcal{T}_k$, such that for any weight function $\mathbf{r} \in \mathbb{R}^n_{>0}$, there exist a weighted Delaunay triangulation $\mathcal{T}_i$ among the finite collection, where $n$ is the number of boundaries.    

\end{thm}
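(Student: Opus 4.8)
The plan is to transport the argument of the piecewise-flat finiteness theorem essentially verbatim, replacing vertices by geodesic boundaries, the duality Lemma \ref{dual_isotopy} by its boundary analogue Lemma \ref{gbh_dual_isotopy}, and the first clause of Lemma \ref{finite_edges} (geodesics joining two points) by its second clause (finiteness of orthogonal geodesics between two fixed boundaries). First I would record that $\Sigma$ is compact, so its diameter $D=\sup_{p,q\in\Sigma}d(p,q)$ is finite; this is the only place the argument uses a global property of the surface, and crucially $D$ is independent of $\mathbf{r}$.

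Next I would fix an arbitrary weight $\mathbf{r}\in\mathbb{R}^n_{>0}$. By Theorem \ref{gbh_unique} a weighted Delaunay truncated triangulation exists, and its non-boundary edges are exactly the geodesics orthogonal to two boundaries that are dual to the 1-cells of the weighted Voronoi decomposition of $\Sigma$. By Lemma \ref{gbh_dual_isotopy}, each such edge is isotopic in $\Sigma$ to a folded path $\gamma$ obtained by gluing, at a point $p$ on a 1-cell, the shortest geodesic from $p$ to $\gamma_i$ and the shortest geodesic from $p$ to $\gamma_j$. Since $d(p,\gamma_i)\le D$ and $d(p,\gamma_j)\le D$, the folded path has length at most $2D$; the orthogonal geodesic in its isotopy class, being the common perpendicular and hence the shortest representative, therefore also has length at most $2D$.

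The bound $2D$ depends only on $(\Sigma,d)$, not on $\mathbf{r}$. Hence by the second clause of Lemma \ref{finite_edges}, for each ordered pair of boundaries $(\gamma_i,\gamma_j)$ there are only finitely many geodesics orthogonal to both of length at most $2D$. The number of boundary pairs is at most $n^2$, and counting with the maximal number $m$ of 1-cells of any weighted Voronoi decomposition we get at most $mn^2$ possibilities; so every weighted Delaunay edge is drawn from one \emph{fixed} finite pool of geodesic segments determined by $(\Sigma,d)$ alone. A truncated triangulation is determined by its set of edges, so only finitely many truncated triangulations arise this way, giving the required list $\mathcal{T}_1,\dots,\mathcal{T}_k$, and by construction each $\mathbf{r}$ produces one of them.

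The only genuinely new point, and the step I would watch most carefully, is the length bound: one must verify that the common perpendicular between $\gamma_i$ and $\gamma_j$ in the prescribed isotopy class really is no longer than the folded comparison path, which rests on the fact that the orthogonal geodesic minimizes length in its homotopy class of arcs joining the two boundaries. Everything else is formally identical to the flat proof — which is exactly why the paper is content to assert that the two remaining theorems follow ``with the same method'' — so the substantive content lives entirely in Lemma \ref{gbh_dual_isotopy} and in the orthogonal-geodesic clause of Lemma \ref{finite_edges}, both of which are already in hand.
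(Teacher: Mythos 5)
Your proposal is correct and follows exactly the route the paper intends: it transports the piecewise-flat finiteness proof (diameter bound $2D$ via Lemma \ref{gbh_dual_isotopy}, then the orthogonal-geodesic clause of Lemma \ref{finite_edges}, then counting boundary pairs), which is precisely what the paper means by proving Theorem \ref{akiyoshi} ``with the same method.'' The step you flag as delicate --- that the orthogonal geodesic minimizes length in its isotopy class of arcs with endpoints free on the two boundaries --- is exactly what the paper invokes as the variational principle, so your write-up is in fact a more complete account than the paper's own.
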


\bibliographystyle{alpha}
\bibliography{ref}

\end{document}